\newtheorem{theo}{Theorem}
\newtheorem{defi}[theo]{Definition}
\newtheorem{lemm}[theo]{Lemma}
\newtheorem{prop}[theo]{Proposition}
\newtheorem{cor}[theo]{Corollary}
\newtheorem{conj}[theo]{Conjecture}
\newtheorem*{maintheo}{Main Theorem}
\theoremstyle{definition}
\newtheorem{rema}[theo]{Remark}
\newtheorem{exam}[theo]{Example}
\begin{document}

\title{A Classification of Flows on AFD Factors with Faithful Connes--Takesaki Modules}
\author{Koichi Shimada}
\email{shimada@ms.u-tokyo.ac.jp}
\address{Department of Mathematical Sciences
University of Tokyo, Komaba, Tokyo, 153-8914, Japan}
\date{}
\begin{abstract}
We completely classify flows on approximately finite dimensional (AFD) factors with faithful Connes--Takesaki modules up to cocycle conjugacy. This is a generalization of the uniqueness of the trace-scaling flow on the AFD factor of type $\mathrm{II}_\infty $, which is equivalent to the uniqueness of the AFD factor of type $\mathrm{III}_1$. In order to achieve this, we show that a flow on any AFD factor with faithful Connes--Takesaki module has the Rohlin property, which is a kind of outerness for flows introduced by Kishimoto and Kawamuro. 
\end{abstract}

\maketitle
\section{Introduction}
\label{intro}
In 1987, Haagerup \cite{H} showed the uniqueness of the approximately finite dimensional (AFD) factor of type $\mathrm{III}_1$. Although this is a great theorem which was the final step of the classification of AFD factors, here, we think of this theorem as a part of theory of flows on von Neumann algebras. In fact, by Takesaki's duality theorem, the uniqueness of the AFD factor of type $\mathrm{III}_1$ is equivalent to the uniqueness of the trace-scaling flow on the AFD factor of type $\mathrm{II}_\infty $. In this paper, we will generalize this result as a part of theory of flows. More precisely, we will show the following theorem.

\begin{maintheo}
\textup{(Theorem \ref {main})} Let $M$ be any AFD factor. Then flows on $M$ which are not approximately inner at any non-trivial point \textup{(}or equivalently, have faithful Connes--Takesaki modules\textup{)} are classified by their Connes--Takesaki modules, up to \textup{(}strong\textup{)} cocycle conjugacy.
\end{maintheo}

 This result is not only a generalization of the uniqueness of the AFD factor of type $\mathrm{III}_1$. It is related to an important problem of classification of flows. The problem is about an ``outerness'' of flows, the Rohlin property.

Now, we explain the Rohlin property. First of all, it is important to note that classification of flows is difficult, compared with the complete classification of actions of discrete amenable groups on AFD factors (See Connes \cite{C3}, Jones \cite{J}, Ocneanu \cite{O}, Sutherland--Takesaki \cite{ST2}, Kawahigashi--Sutherland--Takesaki \cite{KwhST} and Katayama--Sutherland--Takesaki \cite{KtST}). The difficulty seems to come from the difference among various ``outerness conditions'' of flows. For example, one might consider that flows which are outer at any non-trivial point are outer. It seems to be reasonable to think of flows with full Connes spectra to be outer. However, the problem is that these outernesses do not coincide (See Example 2.3 of Kawamuro \cite{Kwm}, which is based on Kawahigashi \cite{Kwh1}, \cite{Kwh2}). Thus we need to clarify what appropriate outerness is. As a candidate for appropriate outerness, the Rohlin property was introduced by Kishimoto \cite{Ksm}. The Rohlin property is an analogue of the property in non-commutative Rohlin's lemma in Connes' classification of actions of $\mathbf{Z}$ (Theorem 1.2.5 of Connes \cite{C3}), which is derived from outerness of actions of $\mathbf{Z}$. Actually, Kishimoto's definition is for flows on $\mathrm{C}^*$-algebras. After Kishimoto's work, Kawamuro \cite{Kwm} introduced the Rohlin property for flows on von Neumann algebras. Recently, Masuda--Tomatsu \cite{MT} have presented a classification theorem for Rohlin flows. Thus the Rohlin property is now considered to be appropriate outerness. However, there is a problem. In general, it is not easy to see whether a given flow has the Rohlin property or not. Moreover, the Rohlin property is not written by ``standard invariants'' for flows. This can be an obstruction for the complete classification of flows on AFD factors. Hence it is important to characterize the Rohlin property in an appropriate way. At this point, it is conjectured that a flow on an AFD factor has the Rohlin property if and only if it has full Connes spectrum and is centrally free at each non-trivial point.

Now, we explain the relation between our main theorem and this characterization program. First of all, the uniqueness of the trace-scaling flow on the AFD factor of type $\mathrm{II}_\infty$ is deeply related to its having the Rohlin property. Indeed, by the results of Connes \cite{C2} and Haagerup \cite{H}, it is possible to see that any trace-scaling flow on the AFD factor of type $\mathrm{II}_\infty$ has the Rohlin property (See Theorem 6.18 of Masuda--Tomatsu \cite{MT}). The uniqueness follows from the classification theorem of Rohlin flows. Thus it is expected that flows have the Rohlin property under our generalized assumption, that is, having faithful Connes--Takesaki modules (See Problem 8.5 of Masuda--Tomatsu \cite{MT}). In this paper, we actually show that flows on any AFD factor with faithful Connes--Takesaki modules have the Rohlin property, and obtain the main theorem by using Masuda--Tomatsu's theorem. Hence it is possible to think of our main theorem as a partial answer to the characterization problem of the Rohlin property. The theorem means that if a flow is ``very outer'' at any non-trivial point, then it is globally ``very outer''. Our main theorem provides interesting examples of Rohlin flows, and we believe that it is a useful observation for the characterization problem.

Hence the difficult point of the proof of the main theorem is to show the Rohlin property. In order to show the Rohlin property of a flow $\alpha$ on a factor $M$, we need to find good unitaries of $M$. To achieve this, we consider the continuous decomposition of $M$. The dual action $\theta $ of a modular flow of $M$ and the canonical extention $\tilde{\alpha }$ of $\alpha $ act on the continuous core $\tilde{M}$ of $M$. If the action $\theta \circ \tilde{\alpha }$ of $\mathbf{R}^2$ is faithful on the center of $\tilde{M}$, then the Rohlin property of $\alpha $ follows from ergodic theory. The problem is that even if $\alpha $ has faithful Connes--Takesaki module, the restriction of $\tilde{\alpha }\circ \theta $ on the center of $\tilde{M}$ may NOT be faithful. In order to overcome this problem, we consider a kind of decomposition of actions over the center of $\tilde {M}$ and reduce the problem to trace-scaling actions of $\mathbf{Z}$, $\mathbf{Z}^2$ or $\mathbf{R}$ on the AFD factor of type $\mathrm{II}_\infty$.  

Besides our theorem, there is a similar result about actions of compact groups due to Izumi \cite{I}. He has shown that if an action of a compact group on a factor of type $\mathrm{III}$ has faithful Connes--Takesaki module, then it is minimal. It is also possible to consider that his result means that ``pointwise outerness'' implies ``global outerness''. Although there is similarity between our main theorem and his theorem, however, there are some observations which show that our main theorem is essentially different from his theorem. For example, for actions of compact groups with faithful Connes--Takesaki modules, cocycle conjugacy always implies conjugacy. This is also true for trace-scaling flows on factors of type $\mathrm{II}_\infty$. However, for our flows, this does not hold.  

Finally, we note that our main theorem depends on the uniqueness of the trace-scaling flow, which is based on the results of Connes \cite{C2} and Haagerup \cite{H}. Showing the uniqueness of the trace-scaling flow on the AFD factor of type $\mathrm{II}_\infty$ without using Connes  and Haagerup's theory is also an important problem (See Problem 8.8 of Masuda--Tomatsu \cite{MT}). However, our main theorem is different from that problem.

\section{Preliminaries}
First of all, we explain things which are important to understand our main theorem and its proof, that is, Connes--Takesaki module and the Rohlin property. 
\subsection{Connes--Takesaki module}
First of all, we recall Connes--Takesaki module. Basic references are Connes--Takesaki \cite{CT} and Haagerup--St\o rmer \cite{HS}. 

Let $M$ be a properly infinite factor and let $\phi $ be a normal faithful semifinite weight on $M$. Set $N:=M\rtimes _{\sigma ^\phi}\mathbf{R}$. Then the von Neumann algebra $N$ is generated by $M$ and a one parameter unitary group $\{ \lambda _s\}_{s\in \mathbf{R}}$ satisfying $\lambda _s x \lambda _{-s}= \sigma _s^\phi (x)$ for $x\in M$, $s\in \mathbf{R}$. Let $\theta ^\phi$ be the dual action of $\sigma^\phi $ and let $C$ be the center of $N$. Then an automorphism $\alpha $ of $M$ extends to an automrphism $\tilde{\alpha }$ of $N$ by the following way (See Proposition 12.1 of Haagerup--St\o mer \cite{HS}).

\[ \tilde{\alpha }(x)=\alpha (x)\ \mathrm{for}\ x\in M,\ \tilde{\alpha }({\lambda _s})=[D\phi \circ \alpha ^{-1}: D\phi]_s\lambda _s\ \mathrm{for}\ s\in \mathbf{R}. \]

This $\tilde{\alpha }$ has the following properties (See Proposition 12.2 of Haagerup--St\o mer \cite{HS}).

\bigskip

(1) The automorphism $\tilde{\alpha }$ commutes with $\theta ^\phi$.

(2) The automorphism $\tilde{\alpha }$ preserves the canonical trace on $N$.

(3) The map $\alpha \mapsto \tilde {\alpha }$ is a continuous group homomorphism.

\bigskip 

Set $\mathrm{mod}^\phi(\alpha ):=\tilde{\alpha }\mid _C$. This is said to be a Connes--Takesaki module of $\alpha$. Actually, this definition is different from the original definition of Connes--Takesaki \cite{CT}. However, in Proposition 13.1 of Haagerup--St\o mer \cite{HS}, it is shown that they are same. This Connes--Takesaki module does not depend on the choice of $\phi$, that is, if $\phi $ and $\psi $ are two normal faithful semifinite weights, then the action $\mathrm{mod}^\phi (\alpha ) \circ \theta ^\phi$ of $\mathbf{R}\times \mathbf{Z}$ on $C$ is conjugate to $\mathrm{mod}^\psi (\alpha ) \circ \theta ^\psi $.  Hence, in the following, we will omit $\phi$, and write $\theta _t$ and $\mathrm{mod}(\alpha )$ if there is no danger of confusion. For an automorphism of any factor of type $\mathrm{II}_\infty$, considering its Connes--Takesaki module is equivalent to considering how it scales the trace. Hence flows with faithful Connes--Takesaki modules are natural generalization of trace-scaling flows.

We explain what property of automorphisms Connes--Takesaki module indicates. By Theorem 1 of Kawahigashi--Sutherland--Takesaki \cite{KwhST}, an automorphism of any AFD factor is approximately inner if and only if its Connes--Takesaki module is trivial. Hence Connes--Takesaki module indicates ``the degree of approximate innerness''. 

\subsection{Rohlin flows}
 Next, we recall the Rohlin property, which is a kind of ``outerness'' for flows. A basic reference is Masuda--Tomatsu \cite{MT}. One of the typical forms of classification theorems of group actions is the following.

\bigskip

 Two ``very outer'' actions whose difference are approximately inner are cocycle conjugate. 

\bigskip

Hence, in order to classify flows on AFD factors, we need to clarify what is appropriate outerness. However, the problem is not so simple, compared with the problem for discrete group actions. As a candidate for appropriate outerness, the Rohlin property was introduced by Kishimoto \cite{Ksm} and Kawamuro \cite{Kwm}. In the following, we will explain the definition.

 Let $\omega $ be a free ultrafilter on $\textbf{N} $. We denote by $l^{\infty }(M)$ the $\mathrm{C}^*$-algebra which consists of all bouded sequences in $M$. Set
\[ I_{\omega }:=\{ (x_n) \in l^{\infty }(M) \mid \text{strong*-lim}_{n\to \omega }(x_n)=0 \}, \]
\begin{align*}
 N_{\omega }:=\{ (x_n) \in l^{\infty }(M) \mid &\text{for all } (y_n)\in I_{\omega }, \\
                                            &\text{ we have } (x_ny_n)\in I_\omega \text{ and }(y_nx_n)\in I_\omega \} ,
 \end{align*}
\[ C_{\omega }:=\{ (x_n) \in l^{\infty }(M) \mid \text{for all } \phi \in M_{*}, \lim _{n\to \omega }\| [\phi ,x_n] \| =0 \}. \]
Then we have $I_{\omega }\subset C_{\omega }\subset N_{\omega }$ and $I_{\omega }$ is a closed ideal of $N_{\omega}$. Hence we define the quotient $\mathrm{C}^{*}$-algebra $M^{\omega }:=N_{\omega }/I_{\omega }$. Denote the canonical quotient map $N_{\omega }\to M^{\omega }$ by $\pi $. Set $M_{\omega }:=\pi (C_{\omega })$. Then $M_{\omega }$ and $M^{\omega }$ are von Neumann algebras as in Proposition 5.1 of Ocneanu \cite{O}. Let $\alpha $ be an automorphism of $M$. We define an automorphism $\alpha ^{\omega }$ of $M^{\omega }$ by $\alpha^{\omega }((x_n))=(\alpha (x_n))$ for $(x_n)\in M^{\omega}$. Then we have $\alpha^{\omega }(M_{\omega })=M_{\omega }$. By restricting $\alpha ^{\omega }$ to $M_{\omega }$, we define an automorphism $\alpha _{\omega }$ of $M_{\omega }$. Hereafter we denote $\alpha ^{\omega }$ and $\alpha _{\omega }$ by $\alpha $ if there is no danger of confusion. 

 Choose a normal faithful state $\varphi $ on $M$. For a flow $\alpha $ on a von Neumann algebra $M$, set 
\begin{multline*}
 M_{\omega ,\alpha }:=\{ (x_n)\in M_\omega \mid \text{for all } \epsilon >0,\text{ there exists } \delta >0 \text{ such that } \\
      \{n\in \textbf{N}\mid \| \alpha _t(x_n)-x_n\|_{\varphi }^{\sharp } <\epsilon   \ \mathrm{ for }\ |t|<\delta \}\in \omega \}. 
\end{multline*}

\begin{defi} 
\textup{(See Definition 4.1 of Masuda--Tomatsu \cite{MT})} A flow $\alpha$ on a factor $M$ is said to have the Rohlin property if for each $p\in \mathbf{R}$, there exists a unitary $u$ of $M_{\omega, \alpha }$ satisfying $\alpha _t(u)=e^{-ipt} u$ for all $t\in \mathbf{R}$.
\end{defi}
For flows with the Rohlin property, there is a classification theorem due to Masuda--Tomatsu \cite{MT}.
\begin{theo}
\textup{(See Theorem 5.14 of Masuda--Tomatsu \cite{MT})} Let $\alpha ^1$, $\alpha ^2$ be two Rohlin flows on a separable von Neumann algebra $M$. If $\alpha ^1_t\circ \alpha ^2_{-t}$ is approximated by inner automorphisms for each $t\in \mathbf{R}$, then they are mutually \textup{(}strongly\textup{)} cocycle conjugate \textup{(}For the definition of strong cocycle conjugacy, see Subsection 2.2 of Masuda--Tomatsu \cite{MT}\textup{)}. \label{Masuda--Tomatsu}
\end{theo}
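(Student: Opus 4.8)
Since the statement is quoted from Masuda--Tomatsu \cite{MT}, I only describe the shape of the argument one would use to prove such a classification: it is a flow version of the Evans--Kishimoto intertwining technique, fed by a Rohlin-type cohomology vanishing lemma modeled on Connes' treatment \cite{C3} of a single automorphism possessing a Rohlin tower.

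The plan is to construct an automorphism $\theta$ of $M$ and a $\sigma$-strongly continuous $\alpha^2$-cocycle $(c_t)_{t\in\mathbf{R}}$ (so $c_{s+t}=c_s\alpha^2_s(c_t)$ and $c_0=1$) with $\theta\circ\alpha^1_t\circ\theta^{-1}=\mathrm{Ad}(c_t)\circ\alpha^2_t$ for all $t$, carrying along throughout the normalizations needed for the \emph{strong} form of cocycle conjugacy of \cite{MT}. One fixes an increasing sequence $F_1\subset F_2\subset\cdots$ of finite subsets of the unit ball of $M_*$ with dense union (using separability of $M$), an exhausting sequence of compact intervals $K_1\subset K_2\subset\cdots$ of $\mathbf{R}$, and a summable sequence $\epsilon_n>0$, and then builds unitaries $v_n,v_n'\in M$ together with finite pieces of approximating cocycles so that the products $\mathrm{Ad}(v_n\cdots v_1)$ and their counterparts converge $\sigma$-strongly and the accumulated cocycles converge uniformly on compacta. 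The input that makes $\alpha^1$ and $\alpha^2$ comparable at all is the hypothesis that $\alpha^1_t\circ\alpha^2_{-t}$ is approximately inner for every $t$: on a factor this says that for any finite amount of data there are unitaries $w$ with $\mathrm{Ad}(w)$ close to $\alpha^1_t\circ\alpha^2_{-t}$, and a standard regularization in the variable $t$ upgrades these to a $\sigma$-strongly continuous family $(w_t)$ that is an \emph{approximate} $\alpha^2$-cocycle and satisfies $\mathrm{Ad}(w_t)\circ\alpha^2_t\approx\alpha^1_t$ on the prescribed data.

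The technical heart is a cohomology vanishing lemma: if $\beta$ is a Rohlin flow on $M$, $F\subset M_*$ is finite, $K\subset\mathbf{R}$ is compact and $\epsilon>0$, then any (approximate) $\beta$-cocycle $(u_t)$ that is uniformly close to the trivial cocycle on $K$ relative to $F$ can be trivialized by a unitary $v\in M$ which is itself close to $1$ on $F$, in the sense that $v^*u_t\beta_t(v)$ is $\epsilon$-close to $1$ on $F$ for all $t\in K$. One proves this by passing to the central sequence algebra and invoking the definition of the Rohlin property: for a suitable $p$ choose $u\in M_{\omega,\beta}$ with $\beta_t(u)=e^{-ipt}u$; from $u$ one manufactures a ``Rohlin partition'' and a path of unitaries, and a spectral average built from $u$ and lifted back to $M$ produces the desired $v$ --- the cocycle identity together with the eigenvalue relation $\beta_t(u)=e^{-ipt}u$ makes the $t$-dependence cancel after averaging, the residual error being controlled by $(\text{length of }K)/p$ times the distance of $(u_t)$ from $1$, so that taking $p$ large gives the lemma. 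This is the exact analogue of the assertion that an approximately inner automorphism with a Rohlin tower becomes, after a unitary perturbation, close to the identity on a large projection, and it is here that the Rohlin property is used in an essential way. With these two ingredients the classification follows by alternating intertwining: use approximate innerness to bring $\alpha^2$, conjugated by a unitary, close to $\alpha^1$ on $(F_1,K_1)$ up to an approximate $\alpha^2$-cocycle, apply the vanishing lemma to the Rohlin flow $\alpha^2$ to replace that cocycle by a genuine one, then do the symmetric step bringing $\alpha^1$ back toward the corrected $\alpha^2$ on $(F_2,K_2)$ using the Rohlin property of $\alpha^1$; iterating with summable errors on the exhausting data forces $\sum_n\|v_n-1\|_\varphi^\sharp<\infty$ and the cocycle corrections to be summable uniformly on compacta, so the infinite products converge to an automorphism $\theta$ and the corrections assemble into a $\sigma$-strongly continuous $\alpha^2$-cocycle $c$ with $\theta\circ\alpha^1_t\circ\theta^{-1}=\mathrm{Ad}(c_t)\circ\alpha^2_t$; keeping $c_0=1$ and monitoring the relevant invariant throughout yields the strong cocycle conjugacy.

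I expect the main obstacle to be the vanishing lemma for flows: converting the Rohlin eigenunitary of $M_\omega$ into an honest unitary of $M$ that trivializes a cocycle, with estimates that are quantitative, \emph{uniform over the whole compact interval} $K$, and robust enough to survive infinitely many iterations. The continuous time parameter, the need to keep the limiting cocycle $\sigma$-strongly continuous, and the passage from plain to strong cocycle conjugacy are precisely what make this substantially more delicate than Connes' cohomology vanishing for a single automorphism, and are the points where one must be most careful.
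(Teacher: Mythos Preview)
The paper does not prove this theorem at all: it is quoted verbatim from Masuda--Tomatsu \cite{MT} and used as a black box, so there is no ``paper's own proof'' to compare against. You correctly recognize this and instead give a sketch of the Masuda--Tomatsu argument; that sketch --- Evans--Kishimoto intertwining driven by a Rohlin-type approximate cohomology vanishing lemma, with the Rohlin eigenunitary in $M_{\omega,\alpha}$ supplying the averaging mechanism --- is an accurate high-level description of how \cite{MT} proceeds, and your identification of the uniform-in-$t$ cohomology vanishing as the delicate point is on target.
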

Hence, the Rohlin property is now considered to be appropriate outerness. However, one may feel that the definition of the Rohlin property is not so simple. Hence characterization of the Rohlin property is an important problem (See Conjecture 8.3 of Masuda--Tomatsu \cite{MT}). 

\section{Main Results}
The main theorem of this paper is the following.
\begin{theo}
\label{main}
Flows on any AFD factor with faithful Connes--Takesaki modules are completely classified by their Connes--Takesaki modules, up to strong cocycle conjugacy.
\end{theo}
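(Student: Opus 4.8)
The plan is to reduce the classification to two inputs: the Masuda--Tomatsu classification of Rohlin flows (Theorem~\ref{Masuda--Tomatsu}), and the assertion --- which is the real content --- that every flow on an AFD factor with faithful Connes--Takesaki module has the Rohlin property. Granting the latter, the argument is short. Let $\alpha^{1}$ and $\alpha^{2}$ be flows on an AFD factor $M$ with $\mathrm{mod}(\alpha^{1})=\mathrm{mod}(\alpha^{2})$, this common module being faithful. Since $\alpha\mapsto\mathrm{mod}(\alpha)$ is a homomorphism, $\mathrm{mod}(\alpha^{1}_{t}\circ\alpha^{2}_{-t})$ is trivial for every $t\in\mathbf{R}$, hence each $\alpha^{1}_{t}\circ\alpha^{2}_{-t}$ is approximately inner by Kawahigashi--Sutherland--Takesaki \cite{KwhST}; both $\alpha^{1}$ and $\alpha^{2}$ have the Rohlin property, so Theorem~\ref{Masuda--Tomatsu} shows they are strongly cocycle conjugate. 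Combined with the easy converse --- cocycle-conjugate flows have conjugate modules, since inner automorphisms have trivial module --- this shows the Connes--Takesaki module is a complete invariant for strong cocycle conjugacy; which module data arise is the accompanying realization statement.

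Everything therefore comes down to the Rohlin property. Fix a flow $\alpha$ on an AFD factor $M$ with faithful module and a normal faithful semifinite weight $\phi$, and pass to the continuous core $\tilde M=M\rtimes_{\sigma^{\phi}}\mathbf{R}$, with canonical trace $\tau$ and center $C$. On $\tilde M$ live the two commuting flows $\theta$ (the dual flow, which scales $\tau$) and $\tilde\alpha$ (the canonical extension, which preserves $\tau$ by property~(2) above); together they form a trace-scaling action $\beta$ of $\mathbf{R}^{2}$ on $\tilde M$, with restriction $\bar\beta$ to $C$. If $\bar\beta$ is free, then following Kishimoto's method one builds Rohlin towers for $\beta$ inside $\tilde M$ from the ergodic-theoretic freeness of $\bar\beta$ on $C$, and descends them to a Rohlin unitary for $\alpha$ in $M_{\omega,\alpha}$ via the dual-action picture and Takesaki duality $\tilde M\rtimes_{\theta}\mathbf{R}\cong M\otimes B(L^{2}(\mathbf{R}))$. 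The difficulty, flagged in the introduction, is that faithfulness of $\mathrm{mod}(\alpha)$ only forbids $t\mapsto\tilde\alpha_{t}|_{C}$ from having a kernel; it does not prevent $\bar\beta$ from having a nontrivial stabilizer, i.e.\ a nonzero closed subgroup $H\subseteq\mathbf{R}^{2}$ acting trivially on $C$.

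The remedy is a decomposition of $\beta$ over $C$. The group $H$ is one of the standard closed subgroups of $\mathbf{R}^{2}$: a discrete rank-one subgroup $\mathbf{Z}v$, a rank-two lattice $\mathbf{Z}v+\mathbf{Z}w$, or a line $\mathbf{R}v$; the cases $H=\{0\}$ and $H=\mathbf{R}^{2}$ are, respectively, the free case above and one ruled out by faithfulness of the module. Since $H$ acts trivially on $C$, it acts fibrewise on the central disintegration of $\tilde M$, whose fibres are copies of the AFD factor of type $\mathrm{II}_{\infty}$, and there it acts by trace-scaling automorphisms because $\theta$ scales $\tau$ while $\tilde\alpha$ preserves it. Fibrewise one is thus facing a trace-scaling action of $\mathbf{Z}$, $\mathbf{Z}^{2}$ or $\mathbf{R}$ on the AFD $\mathrm{II}_{\infty}$ factor, for which the needed Rohlin-type lemmas are available from the results of Connes \cite{C2} and Haagerup \cite{H} (and, for $\mathbf{Z}^{2}$, from the classification of discrete amenable group actions); assembling the fibrewise towers through the direct integral yields Rohlin towers for $\beta$ on $\tilde M$, after which one concludes as in the free case. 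I expect this reduction --- making the fibrewise decomposition of the $\mathbf{R}^{2}$-action precise, controlling measurability and the assembly of towers, and checking the descent to $M$ --- to be the main obstacle; the concluding application of Theorem~\ref{Masuda--Tomatsu} is then routine.
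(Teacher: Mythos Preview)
Your proposal is correct and follows essentially the same route as the paper: reduce Theorem~\ref{main} to the Rohlin property (Theorem~\ref{Rohlin}) via Theorem~\ref{Masuda--Tomatsu} and \cite{KwhST}, then prove the Rohlin property by analyzing the kernel of the $\mathbf{R}^{2}$-action $(\tilde\alpha,\theta)$ on the center of the core and, in each case, disintegrating to reduce to known trace-scaling actions of $\mathbf{Z}$, $\mathbf{Z}^{2}$ or $\mathbf{R}$ on the AFD $\mathrm{II}_{\infty}$ factor. One small omission in your enumeration: the closed subgroup $H\cong\mathbf{R}\times\mathbf{Z}$ must also be listed and then excluded, and in the paper this (and $H=\mathbf{R}^{2}$) is ruled out not by faithfulness of $\mathrm{mod}(\alpha)$ but by faithfulness of $\theta|_{C}$, which holds once one has reduced to the type~$\mathrm{III}_{0}$ case.
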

As a corollary, we obtain a classification theorem up to cocycle conjugacy. For a von Neumann algebra $C$ and a flow $\beta $ of $C$, set $\mathrm{Aut}_\beta (C):=\{ \sigma \in \mathrm{Aut}(C)\mid \sigma \circ \beta _t=\beta _t\circ \sigma, \ t\in \mathbf{R} \}$.
\begin{cor}
Let $\alpha ^1$ and $\alpha ^2$ be two flows on an AFD factor $M$ with faithful Connes--Takesaki modules. Then they are cocycle conjugate if and only if there exists an automorphism $\sigma \in \mathrm{Aut}_\theta (C)$ with $\mathrm{mod}(\alpha ^2_t)=\sigma \circ \mathrm{mod}(\alpha ^1_t) \circ \sigma ^{-1}$ for any $t\in \mathbf{R}$. 
\end{cor}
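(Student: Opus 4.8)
The plan is to deduce the corollary from Theorem~\ref{main} by passing to canonical extensions and using the surjectivity of the module map on an AFD factor. I would fix once and for all a normal faithful semifinite weight $\phi$ on $M$, so that $\tilde M=M\rtimes_{\sigma^\phi}\mathbf R$, $C=Z(\tilde M)$ and $\theta=\theta^\phi$ are fixed; then for a flow $\alpha$ on $M$ the Connes--Takesaki module is the honest flow $t\mapsto\mathrm{mod}(\alpha_t)=\widetilde{\alpha_t}\mid_C$ on $C$, which takes values in $\mathrm{Aut}_\theta(C)$ by property~(1) recalled above. I would read Theorem~\ref{main} in the form: two flows on $M$ with faithful Connes--Takesaki modules are strongly cocycle conjugate if and only if their module flows on $C$ coincide, and strong cocycle conjugacy implies cocycle conjugacy.

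For the ``only if'' direction, suppose $\alpha^1$ and $\alpha^2$ are cocycle conjugate, witnessed by $\gamma\in\mathrm{Aut}(M)$ and an $\alpha^1$-cocycle $\{u_t\}$ in $M$ with $\gamma\circ\alpha^2_t\circ\gamma^{-1}=\mathrm{Ad}(u_t)\circ\alpha^1_t$ for all $t$. I would apply the canonical extension, which is a group homomorphism and sends $\mathrm{Ad}(u_t)$ to $\mathrm{Ad}(u_t)$ on $\tilde M$ since $u_t\in M\subset\tilde M$, to obtain $\widetilde\gamma\circ\widetilde{\alpha^2_t}\circ\widetilde\gamma^{-1}=\mathrm{Ad}(u_t)\circ\widetilde{\alpha^1_t}$ on $\tilde M$. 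Restricting to $C=Z(\tilde M)$, which $\widetilde\gamma$ preserves and on which every $\mathrm{Ad}(u_t)$ acts trivially, yields $(\widetilde\gamma\mid_C)\circ\mathrm{mod}(\alpha^2_t)\circ(\widetilde\gamma\mid_C)^{-1}=\mathrm{mod}(\alpha^1_t)$; hence $\sigma:=(\widetilde\gamma\mid_C)^{-1}=\mathrm{mod}(\gamma^{-1})$ witnesses $\mathrm{mod}(\alpha^2_t)=\sigma\circ\mathrm{mod}(\alpha^1_t)\circ\sigma^{-1}$, and $\sigma\in\mathrm{Aut}_\theta(C)$ again by property~(1). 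This half uses neither Theorem~\ref{main} nor the AFD hypothesis.

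For the ``if'' direction, suppose $\sigma\in\mathrm{Aut}_\theta(C)$ satisfies $\mathrm{mod}(\alpha^2_t)=\sigma\circ\mathrm{mod}(\alpha^1_t)\circ\sigma^{-1}$ for all $t$. First I would realize $\sigma$ by an automorphism of $M$: since $M$ is AFD, the Connes--Takesaki structure theory (\cite{CT}; cf.\ \cite{KwhST}) gives an exact sequence $1\to\overline{\mathrm{Int}}(M)\to\mathrm{Aut}(M)\xrightarrow{\mathrm{mod}}\mathrm{Aut}_\theta(C)\to1$, so one can choose $\gamma_0\in\mathrm{Aut}(M)$ with $\mathrm{mod}(\gamma_0)=\sigma$. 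Then the flow $\beta_t:=\gamma_0\circ\alpha^1_t\circ\gamma_0^{-1}$ is conjugate to $\alpha^1$, hence still has faithful Connes--Takesaki module, and by multiplicativity of $\mathrm{mod}$ its module flow is $t\mapsto\sigma\circ\mathrm{mod}(\alpha^1_t)\circ\sigma^{-1}=\mathrm{mod}(\alpha^2_t)$, i.e.\ it coincides with that of $\alpha^2$. Theorem~\ref{main} then gives that $\beta$ and $\alpha^2$ are strongly cocycle conjugate, hence cocycle conjugate; since $\beta$ is conjugate to $\alpha^1$, it follows that $\alpha^1$ and $\alpha^2$ are cocycle conjugate.

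The bulk of this is bookkeeping built on top of Theorem~\ref{main}, so the step I expect to be the main obstacle is the realization step: surjectivity of $\mathrm{mod}\colon\mathrm{Aut}(M)\to\mathrm{Aut}_\theta(C)$ for AFD $M$, together with the companion point that Theorem~\ref{main} genuinely classifies flows by their module as a flow on the fixed model $C$ rather than merely up to an ambient conjugacy in $\mathrm{Aut}_\theta(C)$. Everything else — compatibility of canonical extensions with cocycle conjugacy, multiplicativity of $\mathrm{mod}$, and invariance of faithfulness of the module under conjugacy — is immediate from the material recalled in the Preliminaries.
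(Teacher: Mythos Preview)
Your proposal is correct and is precisely the argument the paper has in mind; the corollary is stated without proof as an immediate consequence of Theorem~\ref{main}, and the one nontrivial ingredient you flag as the main obstacle---surjectivity of $\mathrm{mod}\colon\mathrm{Aut}(M)\to\mathrm{Aut}_\theta(C)$ for AFD $M$---is exactly the exact sequence (with right inverse) from Sutherland--Takesaki~\cite{ST3} that the paper itself invokes later in Subsection~\ref{model}.
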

As an obvious application, we have the following example.
\begin{exam}
A flow on any AFD factor with faithful Connes--Takesaki module absorbs any flow on the AFD $\mathrm{II}_1$ factor, as a tensor product factor.
\end{exam}
In order to show Theorem \ref{main}, by Theorem \ref{Masuda--Tomatsu}, and the characterization of approximate innerness of automorphisms of AFD factors (Theorem 1 of Kawahigashi--Sutherland--Takesaki \cite{KwhST}), it is enough to show the following theorem. 
\begin{theo}
\label{Rohlin}
A flow on any AFD factor with faithful Connes--Takesaki module has the Rohlin property.
\end{theo}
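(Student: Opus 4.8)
We pass to the continuous core and reduce the statement to known Rohlin-type lemmas ``over the center''. Fix $p\in\mathbf R$; we must produce a unitary $u\in M_{\omega,\alpha}$ with $\alpha_t(u)=e^{-ipt}u$ for all $t$. Let $\tilde M=M\rtimes_{\sigma^\phi}\mathbf R$ be the core, $\theta$ the dual action, $\tilde\alpha$ the canonical extension of $\alpha$, and $C=Z(\tilde M)\cong L^\infty(X,\mu)$. Recall that $\theta$ and $\tilde\alpha$ commute; that $\tilde\alpha$ preserves and $\theta$ scales the canonical trace $\tau$; that $\tilde M$, being AFD, properly infinite and semifinite with center $C$, disintegrates as $\tilde M=\int_X^{\oplus}\tilde M_x\,d\mu$ with each $\tilde M_x$ the AFD factor of type $\mathrm{II}_\infty$; and that $M=\tilde M^{\theta}$. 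By hypothesis $\mathrm{mod}(\alpha)=\tilde\alpha|_C$ is a faithful flow. Writing $\gamma_{(s,t)}:=\theta_s\circ\tilde\alpha_t$ for the resulting action of $\mathbf R^2$ on $\tilde M$, the first step --- carried out by comparing the central sequence algebras of $M$ and of $\tilde M$ through the crossed product $\tilde M=M\rtimes\mathbf R$ --- is to show that it suffices to construct a unitary $v\in\tilde M_\omega$ with $\gamma_{(s,t)}(v)=e^{-ipt}v$ for all $(s,t)$ (note that this forces $\theta_s(v)=v$, hence $v\in M_\omega$, and $\tilde\alpha_t(v)=e^{-ipt}v$, so that $v$, being an eigenvector, lies in $M_{\omega,\alpha}$ and is the desired Rohlin unitary).

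To build $v$, consider the kernel $K\subseteq\mathbf R^2$ of $\gamma|_C$. Faithfulness of $\mathrm{mod}(\alpha)$ says exactly that $K\cap(\{0\}\times\mathbf R)=\{0\}$, and a short inspection of closed subgroups of $\mathbf R^2$ with this property shows $K$ is either $\{0\}$, an infinite cyclic group, a one-dimensional subspace other than $\{0\}\times\mathbf R$, or a rank-two lattice meeting $\{0\}\times\mathbf R$ trivially; thus $K\cong\{0\}$, $\mathbf Z$, $\mathbf R$ or $\mathbf Z^2$. For every nonzero $k\in K$ the $\theta$-component of $k$ is nonzero, so $\gamma_k$ is a \emph{trace-scaling} automorphism of $\tilde M$ acting trivially on $C$; disintegrating over $X$, the restriction $\gamma|_K$ becomes a measurable field of trace-scaling actions of $K$ on the AFD factor of type $\mathrm{II}_\infty$. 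On the other hand $\mathbf R^2/K$ acts faithfully on $(X,\mu)$; moreover $\{0\}\times\mathbf R$ embeds into $\mathbf R^2/K$ and realizes there the flow of weights of $M$, which is aperiodic --- the faithful-module hypothesis forces $M$ to be of type $\mathrm{II}_\infty$ or $\mathrm{III}_0$ (the remaining types have either trivial module, namely types $\mathrm{I}$, $\mathrm{II}_1$, $\mathrm{III}_1$, or a periodic flow of weights admitting no faithful commuting flow, namely type $\mathrm{III}_\lambda$ with $0<\lambda<1$). Hence the $\mathbf R^2/K$-action on $(X,\mu)$ admits the Rohlin towers supplied by ergodic theory.

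If $K=\{0\}$ then $\gamma|_C$ is faithful and aperiodic and $v$ is produced directly, as an element of $C^\omega\subseteq\tilde M_\omega$, from an $\mathbf R^2$ Rohlin lemma on $(X,\mu)$ --- this is the case alluded to in the introduction. Otherwise we synthesize $v$ out of two ingredients. First, a measurable field $x\mapsto w_x\in(\tilde M_x)_\omega$ of Rohlin eigen-unitaries for the trace-scaling $K$-action on the fibres: these exist because trace-scaling actions of $\mathbf Z$, $\mathbf Z^2$ and $\mathbf R$ on the AFD factor of type $\mathrm{II}_\infty$ have the Rohlin property --- for $\mathbf R$ this rests on the results of Connes \cite{C2} and Haagerup \cite{H} (Theorem 6.18 of \cite{MT}), and for $\mathbf Z$ and $\mathbf Z^2$ on Connes' non-commutative Rohlin lemma \cite{C3} and Ocneanu's theory \cite{O}, such actions being centrally free since they are not approximately inner --- and the Rohlin data can be chosen measurably in $x$ by the uniqueness, up to cocycle conjugacy, of trace-scaling actions. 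Second, a scalar function on $X$ obtained from the ergodic-theoretic Rohlin towers of the faithful $\mathbf R^2/K$-action: since functions on $X$ are $K$-invariant, multiplying $w=(w_x)_x$ by such a function preserves the $K$-eigenvalue relation, and the function can be chosen, by an approximate cocycle-splitting argument inside $\tilde M_\omega$, so that the product $v$ also satisfies the required relation in the transversal $\mathbf R^2/K$-directions. Transferring $v$ back to $M$ as in the first step completes the proof.

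The step I expect to be the main obstacle is this last synthesis in the non-trivial cases: obtaining the fibrewise Rohlin eigen-unitaries uniformly (measurably) in the base point and coupling them with the base Rohlin towers so that the outcome is a genuine $\gamma$-eigen-unitary --- i.e.\ solving the approximate cocycle equation that couples the non-commutative fibre data for $K$ with the measure-theoretic base data for $\mathbf R^2/K$ --- all within the central sequence algebra $\tilde M_\omega$. The preliminary reduction, comparing the central sequence algebras of $M$ and $\tilde M$ across the crossed product, is a second, more routine but still delicate, technical point.
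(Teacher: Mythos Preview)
Your proposal is correct and follows essentially the same strategy as the paper: reduce to the core, split into cases according to the kernel $K\subset\mathbf R^2$ of $(\theta|_C)\circ\mathrm{mod}(\alpha)$ (which is $0$, $\mathbf Z$, $\mathbf Z^2$ or $\mathbf R$ by faithfulness of both $\theta|_C$ and $\mathrm{mod}(\alpha)$), and in each case combine an ergodic-theoretic Rohlin lemma for the action of $\mathbf R^2/K$ on $C$ with the Rohlin property of trace-scaling actions of $K$ on the AFD $\mathrm{II}_\infty$ fibres. You have also correctly anticipated the two genuinely delicate points --- the passage from $\tilde M_\omega$ to $M_{\omega,\alpha}$ (the paper's Lemma~\ref{7}), and the coupling of fibre and base Rohlin data --- though note that the paper first arranges for an $\alpha$-invariant dominant weight (via Sutherland--Takesaki) and, in the $K\cong\mathbf Z$ and $K\cong\mathbf Z^2$ cases, replaces your ``Rohlin towers for $\mathbf R^2/K$'' (which has a compact factor) by an explicit iterated disintegration over the torus directions.
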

From what we have explained in the previous section, this theorem means that a kind of ``pointwise outerness'' implies ``global outerness''.

\bigskip

As we have explained in Section \ref{intro} and the previous subsection, characterization of the Rohlin property is an important problem (Conjecture 8.3 of Masuda--Tomatsu \cite{MT}). Theorem \ref{Rohlin} gives a partial answer to this problem.  We will proceed further to this direction in Subsection \ref{generalization}. 

\section{The Proof of Main Results}

In this section, we show Theorem \ref{Rohlin}. In order to achieve this, we first note that we may assume that a flow has an invariant weight. This is seen in the following way. Let $\alpha $ be a flow on an AFD factor $M$. Then by the same argument as in Lemma 5.10 of Sutherland--Takesaki \cite{ST2} (or equivalently, by the combination of Lemma 5.11 and Lemma 5.12 of \cite{ST2}), there exists a flow $\beta $ and a dominant weight $\phi $ which satisfy the following conditions.

\bigskip

(1) We have $\phi \circ \beta _t =\phi $ for all $t\in \mathbf{R}$.

(2) The action $\beta $ is cocycle conjugate to $\alpha \otimes \mathrm{id}_{B(L^2\mathbf{R})}$.

\bigskip

 By Lemma 2.11 of Connes \cite{C}, $(M\otimes B(L^2\mathbf{R}))_\omega =M_\omega \otimes \mathbf{C}$. Hence, by replacing $\alpha $ by $\beta $, we may assume that the action $\alpha $ has an invariant dominant weight. In the rest of the paper, we denote the continuous core $M\rtimes _{\sigma ^\phi} \mathbf{R}$ by $N$ and the dual action of $\sigma ^\phi $ by $\theta$. Then by the same argument as in the proof of Proposition 13.1 of Haagerup--St\o rmer \cite{HS}, the action $\tilde {\alpha }$ extends to a flow $\tilde{\tilde{\alpha }}$ of $N\rtimes _\theta \mathbf{R}$ so that if we identify $N\rtimes _\theta \mathbf{R}$ with $M\otimes B(L^2\mathbf{R})$ by Takesaki's duality, $\tilde{\tilde{\alpha }}$ corresponds to $\alpha \otimes \mathrm{id}$.  By Lemma 2.11 of Connes \cite{C} again, in order to show that $\alpha$ has the Rohlin property, it is enough to show that $\tilde{\tilde{\alpha }}$ has the Rohlin property. In order to achieve this, we need to choose $\{u_n \}\subset U(M\otimes B(L^2\mathbf{R}))_\omega $ which satisfies the conditions in the definition of the Rohlin property. Our strategy is to choose $\{u_n \}$ from $N$. Based on this strategy, it is sufficient to show the following lemma.

\begin{lemm}
For each $p\in \mathbf{R}$, there exists a sequence $\{u_n\} \subset U(N)$ satisfying the following conditions.

\textup{(1)} We have $\| [u_n, \phi ]\| \to 0$ for any $\phi \in N_{*}$.

\textup{(2)} We have $\theta _s(u_n)-u_n \to 0$ compact uniformly for $s\in \mathbf{R}$ in the strong* topology.

\textup{(3)} We have $\tilde{\alpha}_t (u_n)-e^{ipt}u_n \to 0$ compact uniformly for $t\in \mathbf{R}$ in the strong* topology.
\label{key}
\end{lemm}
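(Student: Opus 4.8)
The plan is to reduce the construction of the unitaries $\{u_n\}\subset U(N)$ to a Rohlin-type statement for a trace-scaling action on the AFD factor of type $\mathrm{II}_\infty$, where such statements are known. Write $C$ for the center of $N$ and consider the action of $\mathbf{R}^2$ on $C$ generated by $\theta$ (the dual of the modular flow) and $\mathrm{mod}(\alpha)=\tilde\alpha|_C$. If this $\mathbf{R}^2$-action on $C$ were free (say, ergodic with appropriate spectral data), then by ergodic theory one could directly produce, for each $p\in\mathbf{R}$, central unitaries $u_n\in C$ with $\theta_s(u_n)-u_n\to 0$ and $\tilde\alpha_t(u_n)-e^{ipt}u_n\to 0$ compact-uniformly, and condition (1) would be automatic since $u_n$ is central. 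The obstruction, flagged in the introduction, is that faithfulness of $\mathrm{mod}(\alpha)$ as a Connes--Takesaki module does \emph{not} force the combined action $\theta\circ\tilde\alpha$ to be faithful on $C$.

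To handle the general case I would perform a disintegration of $N$ over $C$ and analyze the action of the stabilizer. Concretely: decompose the flow parameter so that on the part of $C$ where $\theta\circ\mathrm{mod}(\alpha)$ is genuinely free one uses central unitaries as above; on the complementary part, where some one-parameter (or $\mathbf{Z}$- or $\mathbf{Z}^2$-) subgroup acts trivially on $C$, the restricted action is, up to the relevant cross-section, a trace-scaling action of $\mathbf{Z}$, $\mathbf{Z}^2$, or $\mathbf{R}$ on the AFD factor of type $\mathrm{II}_\infty$ (since $N$ is AFD with a trace, and the action scales it). For these trace-scaling model actions one invokes the known Rohlin property — coming from the results of Connes \cite{C2} and Haagerup \cite{H} as packaged in Theorem 6.18 of Masuda--Tomatsu \cite{MT}, together with Ocneanu's noncommutative Rohlin lemma for $\mathbf{Z}$-actions \cite{O} — to obtain unitaries $u_n$ in the appropriate central sequence algebra with the eigenvalue relation. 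Assembling these pieces over the disintegration, with a measurable selection so that the estimates are uniform in the base point, yields $\{u_n\}\subset U(N)$ satisfying (1), (2), (3) simultaneously; property (1) for the non-central pieces is exactly the statement that the chosen $u_n$ lie in the central sequence algebra of the $\mathrm{II}_\infty$ factor, which the Rohlin lemma guarantees.

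The main obstacle I expect is precisely this reduction step: setting up the decomposition of $\alpha$ over the center $C$ in a way that is compatible with both $\theta$ and $\tilde\alpha$ simultaneously, identifying the stabilizer subgroups correctly, and then verifying that the restricted actions really are the trace-scaling models to which Masuda--Tomatsu's and Connes'/Haagerup's results apply. Keeping all estimates compact-uniform and measurable across the disintegration — so that the glued sequence genuinely lies in $N$ and not just in a direct integral of central sequence algebras — is the delicate technical heart; by contrast, once the correct model actions are isolated, the existence of the eigenvalue unitaries is a citation. Finally, I would note that the passage from Lemma \ref{key} back to the Rohlin property of $\tilde{\tilde\alpha}$, hence of $\alpha$, is already carried out in the discussion preceding the lemma via Connes' $(M\otimes B(L^2\mathbf{R}))_\omega = M_\omega\otimes\mathbf{C}$ and Takesaki duality, so no further argument is needed there.
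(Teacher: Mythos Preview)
Your outline is essentially the paper's own strategy: when the combined $\mathbf{R}^2$-action $\mathrm{mod}(\alpha)\circ(\theta|_C)$ is free one builds the $u_n$ inside $C$ via Feldman's Rohlin-type theorem for nonsingular $\mathbf{R}^d$-actions, and in the non-free cases one disintegrates $N$ (first over $C^\sigma$, then over $Z(N_0)$) to reduce to trace-scaling actions of $\mathbf{R}$, $\mathbf{Z}$, or $\mathbf{Z}^2$ on the AFD $\mathrm{II}_\infty$ factor, where the Rohlin property is available. Two small corrections are worth flagging. First, because $\theta$ is ergodic on $C$, the kernel of $\mathrm{mod}(\alpha)\circ(\theta|_C)$ is a single closed subgroup of $\mathbf{R}^2$ (necessarily one of $0$, $\mathbf{Z}$, $\mathbf{Z}^2$, $\mathbf{R}$), so the proof is a global case split on that kernel rather than a decomposition of $C$ into ``free'' and ``non-free'' pieces as your wording suggests. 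Second, your final sentence is not quite right: conditions (1) and (2) of Lemma~\ref{key} only give asymptotic commutation with a \emph{dense} subset of $M\cong N\rtimes_\theta\mathbf{R}$, which does not automatically imply $\{u_n\}$ is centralizing; the paper needs the separate Lemma~\ref{7} (a short crossed-product computation) to close that gap.
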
 
By the first two conditions, this $\{ u_n\}$ asymptotically commutes with elements in a dense subspace of $M\otimes B(L^2\mathbf{R}) \cong M$. However, in general, this does not imply that $\{ u_n \}$ is centralizing (and this sometimes causes a serious problem). Hence, in  order to assure that Lemma \ref{key} implies Theorem \ref{Rohlin}, we need to show the following lemma.
\begin{lemm}
\label{7}
Let $M$ be an AFD factor of type $\mathrm{III}$ and let $M=N\rtimes _{\theta}\mathbf{R}$ be the continuous decomposition. Then a sequence $\{u_n\}\subset U(N)$ with conditions \textup{(1)} and \textup{(2)} of the above lemma is centralizing.
\end{lemm}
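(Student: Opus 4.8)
\emph{Plan.} I would show directly that $\{u_n\}$ is centralizing in $M$, i.e. that $\lim_n\|[\psi,u_n]\|=0$ for every $\psi\in M_*$, where $[\psi,u_n]=\psi(\cdot\,u_n)-\psi(u_n\,\cdot)$. Since $\sup_n\|u_n\|=1$, the set of such $\psi$ is norm-closed, so it suffices to treat $\psi$ in a norm-dense subspace. Writing the continuous decomposition spatially, $M\subset B(L^2(N)\otimes L^2(\mathbf{R}))$ with $N$ acting diagonally in the $\mathbf{R}$-variable and $\lambda_\theta(\mathbf{R})$ by translation, I take as dense subspace the vector functionals $\psi=\omega_{\xi_0\otimes f_0,\ \eta_0\otimes g_0}$ with $\xi_0,\eta_0$ in a dense subspace of $L^2(N)$ and $f_0,g_0\in C_c(\mathbf{R})$; for such $\psi$ one has $[\psi,u_n](x)=\langle(xu_n-u_nx)(\xi_0\otimes f_0),\eta_0\otimes g_0\rangle$.

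First I reduce to elements with bounded dual spectrum. Let $\hat\theta$ be the dual action of $\mathbf{R}$ on $M=N\rtimes_\theta\mathbf{R}$, so that $M^{\hat\theta}=N$. For $g\in L^1(\mathbf{R})$ with $\int g=1$ set $\Psi_g=\int g(p)\,\hat\theta_p(\cdot)\,dp$: a normal unital completely positive contraction of $M$ fixing $N$ pointwise, and since $u_n\in N=M^{\hat\theta}$ one gets $\Psi_g(xu_n-u_nx)=[\Psi_g(x),u_n]$. Choosing $g$ a Fejér kernel whose Fourier transform is supported in $[-R,R]$ places $\Psi_g(x)$ in the spectral subspace $M([-R,R])=\overline{\mathrm{span}}^{\,w}\{a\lambda_\theta(s):a\in N,\ |s|\le R\}$; and since the Fejér kernels form an approximate identity while $p\mapsto\psi\circ\hat\theta_p$ is norm-continuous, for any $\epsilon>0$ one can pick $R=R(\psi,\epsilon)$ with $\|\psi-\psi\circ\Psi_g\|<\epsilon$. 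Then $[\psi,u_n](x)=[\psi,u_n](\Psi_g(x))+O(\epsilon)$ uniformly for $x$ in the unit ball of $M$, so it is enough to bound $|[\psi,u_n](x')|$ for $x'$ in the unit ball of $M([-R,R])$.

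On $M([-R,R])$ the two hypotheses enter. Condition (2) gives $u_n\lambda_\theta(s)u_n^*-\lambda_\theta(s)=(u_n-\theta_s(u_n))\lambda_\theta(s)\to0$ strongly$^*$, uniformly for $|s|\le R$, which is what is needed to move $u_n$ past the translation direction. Condition (1), in the strengthened but standard form that a central sequence of unitaries of $N$ satisfies $u_nv-(J_Nu_n^*J_N)v\to0$ for every $v\in L^2(N)$, lets one replace inside $\langle x'u_n(\xi_0\otimes f_0),\eta\rangle$ the left action of $u_n$ on $\xi_0$ by the right action $J_Nu_n^*J_N\otimes1=:V_n$, which commutes with $N$ exactly and, by condition (2) again, asymptotically and uniformly with each $\lambda_\theta(s)$, $|s|\le R$; the resulting error is $o(1)$ uniformly over the unit ball of $M$ \emph{because} condition (1) is uniform over the unit ball of $N$. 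This reduces $[\psi,u_n](x')$, uniformly in $x'$ in the unit ball of $M([-R,R])$, to $\langle[x',V_n]\,\xi_0\otimes f_0,\ \eta_0\otimes g_0\rangle+o(1)$, with $V_n$ asymptotically commuting with the generators $a\lambda_\theta(s)$, $|s|\le R$.

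The main obstacle is this last uniformity: upgrading "$V_n$ asymptotically commutes with the generators of $M([-R,R])$'' to "$\langle[x',V_n]\,\xi,\eta\rangle\to0$ uniformly over the whole unit ball of $M([-R,R])$''. This is precisely the general failure of asymptotic commutation with a generating set to yield a central sequence, and is why the lemma needs proof at all. I expect to close it by a further localization: since $f_0,g_0$ are compactly supported, only the compression of $x'$ to a fixed bounded interval $I$ of the $\mathbf{R}$-variable is seen by $\psi$, and on that compression $M([-R,R])$ is controlled by a crossed product of $N$ by a discrete group, where the conditional expectation onto $N$ recovers the $s$-th "Fourier coefficient'' of $x'$ with norm at most $\|x'\|$; estimating termwise with conditions (1) and (2) as above then gives the uniform bound. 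Combining this with the reductions above and letting $\epsilon\to0$ yields $\lim_n\|[\psi,u_n]\|=0$ on the dense subspace, hence for all $\psi\in M_*$.
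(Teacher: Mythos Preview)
Your overall strategy---reduce to vector functionals and replace the left action of $u_n$ by the right action $V_n=J_Nu_n^*J_N\otimes 1$---is exactly the right idea, and is in fact what the paper does. But you miss the one observation that makes the argument short: in this crossed-product picture, $J_Nu_n^*J_N\otimes 1$ \emph{equals} $J_M u_n^* J_M$ for $u_n\in N$, so $V_n\in M'$. Hence $[x',V_n]=0$ \emph{exactly}, for every $x'\in M$, and your ``main obstacle'' evaporates. Your claim that $V_n$ only asymptotically commutes with $\lambda_\theta(s)$ is simply wrong: $V_n$ acts fiberwise in the $\mathbf{R}$-variable while $\lambda_\theta(s)$ translates that variable, so they commute on the nose. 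Once you see this, the entire spectral reduction via $\hat\theta$ and Fej\'er kernels is unnecessary, as is the proposed localization/Fourier-coefficient fix.

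The paper's proof is the streamlined version of what remains. It works directly on the standard Hilbert space $L^2(N)\otimes L^2(\mathbf{R})$: for $\zeta=\xi\otimes f$ one has $(u_n\zeta)(s)=\theta_{-s}(u_n)\xi\cdot f(s)$ and $(\zeta u_n)(s)=(J_Nu_n^*J_N\xi)f(s)$, so
\[
\|u_n\zeta-\zeta u_n\|^2=\int_{\mathbf{R}}\|\theta_{-s}(u_n)\xi-\xi u_n\|^2\,|f(s)|^2\,ds,
\]
which tends to $0$ by splitting $\theta_{-s}(u_n)\xi-\xi u_n=(\theta_{-s}(u_n)-u_n)\xi+(u_n\xi-\xi u_n)$ and applying condition~(2) with dominated convergence to the first piece and condition~(1) (via the standard equivalence between centralizing sequences in $N$ and $\|u_n\xi-\xi u_n\|_{L^2(N)}\to 0$, i.e.\ Lemma~2.6 of Masuda--Tomatsu) to the second. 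Density of simple tensors and boundedness of $\{u_n\}$ finish the argument. Your replacement step $u_n(\xi_0\otimes f_0)\approx V_n(\xi_0\otimes f_0)$ is precisely this computation; you just did not notice that nothing further is required afterwards.
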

\begin{proof}
Let $H$ be the standard Hilbert space of $N$. Take $\xi \in H$ and $f\in L^2(\mathbf{R})$. Since 
\[ x(\xi \otimes f)(s) = (\theta _{-s}(x)\xi )f(s),\]
\begin{align*}
(\xi \otimes f)x(s)&=(J_Mx^*J_M(\xi \otimes f))(s) \\
                   &=(J_Nx^*J_N\xi )f(s) \\
                   &=(\xi x)f(s)
\end{align*}
 for $s\in \mathbf{R}$, $x\in N$, we have
\begin{align*}
\| u_n(\xi \otimes f) - (\xi \otimes f)u_n \| ^2 &=\int _{\mathbf{R}}\| \theta _{-s}(u_n)\xi -\xi u_n\| ^2| f(s)| ^2 \ ds\\
                                                               &\leq \int _{\mathbf{R}} \| (\theta _{-s}(u_n) -u_n )\xi \| ^2 | f(s) | ^2\ ds \\
                                                               & + \int _{\mathbf{R}}\| u_n \xi -\xi u_n \| ^2| f(s)| ^2 \ ds \\
                                                               &\to 0
\end{align*}
by Lebesgue's convergence theorem. Here, the convergence of the second term follows from Lemma 2.6 of Masuda--Tomatsu \cite{MT}. Any vector of $H\otimes L^2\mathbf{R}$ is approximated by finite sums of vectors of the form $\xi \otimes f$. Hence for any vector $\eta \in H\otimes L^2\mathbf{R}$, we have $\| u_n\eta -\eta u_n \| \to 0$. Hence $\{u_n \}$ is centralizing.
\end{proof}
 By this lemma, Lemma \ref{key} implies Theorem \ref{Rohlin}. In the following, we will show Lemma \ref{key}. If $M$ is of type $\mathrm{II}_\infty$, Lemma \ref{key} is shown in Theorem 6.18 of Masuda--Tomatsu \cite{MT}, using Connes and Haagerup's theory. If $M$ is of type $\mathrm{II}_1$ or is of type $\mathrm{III}_1$, then we need not do anything because Connes--Takesaki modules of automorphisms are always trivial. Hence we only need to consider the case when $M$ is of type $\mathrm{III}_0$ and the case when $M$ is of type $\mathrm{III}_\lambda$ ($0< \lambda <1$). Actually, as we will see in Remark \ref{lambda}, if $M$ is of type $\mathrm{III}_\lambda $ ($0<\lambda <1$), the Connes--Takesaki module of a flow cannot be faithful. Hence, the only problem is that how to handle the case when $M$ is of type $\mathrm{III}_0$. 

 Let $C$ be the center of $N$. First, we list up the form of the kernel of the action $\mathrm{mod}(\alpha ) \circ (\theta \mid_{C})$ of $\mathbf{R}^2$ on $C$. This is a closed subgroup of $\mathbf{R}^2$. Thus the kernel must be isomorphic to one of the following groups.
\[ 0,\ \mathbf{Z},\ \mathbf{Z}^2,\ \mathbf{R},\ \mathbf{R}\times \mathbf{Z},\ \mathbf{R}^2. \]
However, since $\theta \mid _{C}$ is faithful, the kernel cannot be isomorphic to $\mathbf{R}\times \mathbf{Z}$ or $\mathbf{R}^2$. We handle the other four cases separately.

\bigskip

We first consider the case when $\mathrm{ker}(\mathrm{mod}(\alpha ) \circ (\theta \mid _{C}))=0$. In this case, by an argument similar to that of the proof of Theorem 3.3 of Shimada \cite{S}, Lemma \ref{key} follows from a Rohlin type theorem due to Feldman \cite{F}. In the following, we will explain this theorem.

\bigskip

\textbf{Settings.} A subset $Q$ of $\mathbf{R}^d$ is said to be a cube if $Q$ is of the form
\[ [-s_1,t_1]\times \cdots \times [-s_d,t_d] \]
for some $s_1, \cdots , s_d, t_1,\cdots , t_d >0$. Let $Q$ be a cube of $\mathbf{R}^d$ and $T$ be a non-singular action of $\mathbf{R}^d$ on a Lebesgue space $(X, \mu )$. Then a measurable subset $F$ of $X$ is said to be a $Q$-set if $F$ satisfies the following two conditions.

\bigskip

(1) The map $Q\times F \ni (t,x) \mapsto T_t(x) \in X$ is injective.

(2) The set $T_QF:=\{ T_t(x) \mid t\in Q, x\in F \}$ is measurable and non-null.

\bigskip

In this setting, the following theorem holds.
\begin{theo} \textup{ (p.410 of Feldman \cite{F}, Thoerem 1 of Feldman--Lind \cite{FL})} 
\label{rp}
Let $T$ be a  free non-singular action of $\mathbf{R}^d $ on the standard probability space $(X, \mu)$. Then for any $\epsilon >0$ and for any cube $P$ of $\mathbf{R}^d$, there exists a large cube $Q$ and a $Q$-set $F$ of $X$ with 
\[ \mu (T_{\bigcap _{t\in P} (t+Q)}F) >1-\epsilon .\]
\end{theo}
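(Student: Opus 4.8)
The plan is to reduce Theorem \ref{rp} to a measurable ``efficient tiling'' statement for free $\mathbf{R}^d$-actions; the only combinatorial input is that a large cube tiles $\mathbf{R}^d$ almost perfectly. Write $P=\prod_{i=1}^d[-s_i,t_i]$, put $p_i:=s_i+t_i$, and for $L>\max_i p_i$ let $Q=Q_L$ be a cube all of whose side lengths equal $L$; for a small $\eta>0$ set $\Lambda=(L+\eta)\mathbf{Z}^d$. Then the translates $\{v+Q\}_{v\in\Lambda}$ have pairwise disjoint interiors, so any two distinct $v,v'\in\Lambda$ differ in some coordinate by more than $L$, i.e.\ $v-v'\notin Q-Q$; meanwhile $Q':=\bigcap_{t\in P}(t+Q)$ is a cube with side lengths $L-p_i>0$, and $\bigcup_{v\in\Lambda}(v+Q')$ covers a fraction $\prod_i\frac{L-p_i}{L+\eta}$ of $\mathbf{R}^d$, which tends to $1$ as $L\to\infty$. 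Hence it suffices to produce, for $L$ large enough in terms of $\epsilon$ and $P$, a Borel set $F\subset X$ meeting a.e.\ orbit in a ``$Q$-separated'' set (distinct points differing by elements outside $Q-Q$, so that $F$ is automatically a $Q$-set) whose cubes $\{x+Q\}_{x\in F}$ fill all but an $\epsilon$-fraction of each orbit; granting the non-singular bookkeeping below, $\mu(T_{Q'}F)$ then differs from $\mu(T_QF)$, and from $1$, by quantities that are negligible for large $L$.

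For the dynamical realization one cannot simply place an exact translate of $\Lambda$ in every orbit: that would be a measurable factor map $(X,T)\to\mathbf{R}^d/\Lambda$ onto the linear flow on the torus, which a weakly mixing action does not admit. Instead I would tile a.e.\ orbit by translates of $Q$ only approximately, permitting ``grain boundaries'' where the local lattice jumps and keeping the untiled region small. Concretely: (i) choose a Borel cross-section $Y\subset X$ meeting a.e.\ orbit in a uniformly discrete, relatively dense point set, which is standard for free actions of $\mathbf{R}^d$; (ii) run a measurable greedy procedure on the anchor points of $Y$ that selects a subset $F\subseteq Y$ together with a cube $x+Q$ about each chosen anchor $x$, keeping the cube interiors pairwise disjoint and halting only when no further admissible $Q$-cube can be inserted. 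Maximality forces every point of a.e.\ orbit to lie within a bounded distance of a selected cube, so the $Q$-cubes already cover a fixed positive fraction of each orbit; iterating the construction inside the leftover region, or averaging over the phase of an auxiliary lattice, or invoking the hyperfiniteness of the orbit equivalence relation of $T$ to approximate it by finite sub-relations with cube-like classes, pushes the covered fraction past $1-\epsilon$ provided $L$ is large compared with the separation and density constants of $Y$.

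The delicate point is exactly this last step: forcing the covered fraction past $1-\epsilon$ while still using a \emph{single} cube shape $Q$, rather than an asymptotically invariant family of shapes, and this is where the arguments of Feldman and of Feldman--Lind actually live. A second subtlety is that $T$ is only non-singular, so the tiling construction naturally yields ``a.e.\ orbit covered up to small Lebesgue-fraction'', and turning this into a bound for $\mu(X\setminus T_{Q'}F)$ requires controlling the Radon--Nikodym cocycle along the (large) tiles; this is handled by building $Y$ and the greedy tiling compatibly with $\mu$ --- equivalently, by working with the orbit equivalence relation, whose hyperfiniteness is automatically adapted to the measure class --- so that the exceptional set is small in $\mu$ and not merely in the fibrewise Lebesgue measure. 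Once $F$ is produced, injectivity of $Q\times F\to X$ and measurability and non-nullity of $T_QF$ hold by construction, and $\mu(T_{Q'}F)>1-\epsilon$ follows from the volume estimate above. (For $d=1$ the statement specializes to the Rohlin lemma for measured flows, and the same cross-section picture applies.)
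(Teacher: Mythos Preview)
Your proposal is a heuristic sketch rather than a proof, and you are candid about this: you explicitly say that ``forcing the covered fraction past $1-\epsilon$ while still using a \emph{single} cube shape $Q$ \ldots\ is where the arguments of Feldman and of Feldman--Lind actually live.'' That is exactly the gap. A maximal greedy packing of $Q$-cubes anchored on a cross-section $Y$ only gives a \emph{fixed positive} density of coverage, bounded away from $1$ independently of $L$; iterating inside the leftover region does not help, because the leftover is a union of slabs thinner than $L$ in at least one direction, so no further $Q$-cube fits. Your other two suggestions---``averaging over the phase of an auxiliary lattice'' and ``invoking the hyperfiniteness of the orbit equivalence relation''---are each the germ of a real argument, but neither is carried out, and the second is heavy machinery relative to what is needed.

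The paper's outline is closer to an actual proof and proceeds differently. It first uses Lind's argument (with the remark that Wiener's maximal ergodic theorem is available for non-singular actions, which disposes of the Radon--Nikodym issue you flag) to produce, for \emph{any} cube $Q$, a $Q$-set of positive measure. The passage from positive measure to measure $>1-\epsilon$ then follows Feldman--Lind's averaging/refinement scheme: one works with nested scales $N\mid P=NLM$, the inner cube $S_L(Q_P)$, the boundary collar $B_N(Q_P)$, and the shift grid $C_{P/L}$, and proves two counting statements---(i)$'$ that the collar has small measure for most shifts, and (ii)$'$ that the inner part of the refined tower nearly exhausts the tower---from which the theorem follows as in Theorem~1 of Feldman--Lind. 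This is precisely the ``averaging over phase'' idea, made quantitative; your write-up would become a proof if you replaced the greedy step by this Feldman--Lind averaging and stated the analogues of (i)$'$ and (ii)$'$.
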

The proof is written in Feldman \cite{F2}. However, his paper is privately circulated. Hence we explain the outline of the proof in Appendix (Section \ref{appendix}), which is based on Theorem 1 of Feldman--Lind \cite{FL} and Lind \cite{L}. As written in the proof of Theorem 1.1 (a) of Feldman \cite{F} (p.410 of Feldman \cite{F}), it is possible to introduce a measure $\nu $ on $F$ so that the map $Q\times F\ni (t,x) \mapsto T_t(x) \in T_QF$ is a non-singular isomorphism. The measure $\nu $ is defined in the following way. Set
\[ \mathcal{M}:= \{ A\subset F\mid T_Q(A) \ \mathrm{is} \ \mathrm{measurable} \ \mathrm{with} \ \mathrm{respect} \ \mathrm{to} \ \mu \}.\]
Then $\mathcal{M}$ is a $\sigma$-algebra of $F$ and it is possible to define a measure $\nu $ on $F$ by
\[ \nu (A):=\frac{\mu (T_QA)}{\mu(T_QF)}\]
for $A\in \mathcal{M}$. Then the map $(t,x) \mapsto T_t(x)$ is a non-singular isomorphism with respect to $\mathrm{Lebesgue}\otimes \nu$ and $\mu | _{T_QF}$. These things are written in p.410 of Feldman \cite{F} and the proof may be written in Feldman--Hahn--Moore \cite{FHM}. In this paper, for reader's convenience, we present a proof of what we will use (Propositions \ref{A.4} and \ref{A.5} of Appendix).
\begin{lemm}
When $\mathrm{ker}(\mathrm{mod}(\alpha )\circ (\theta | _C))$ is zero, Lemma \ref{key} holds.
\end{lemm}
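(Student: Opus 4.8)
The plan is to reduce the construction of the Rohlin unitaries in $N$ to Feldman's Rohlin-type theorem (Theorem \ref{rp}) for the free $\mathbf{R}^2$-action $\mathrm{mod}(\alpha)\circ(\theta|_C)$ on the spectrum of the center $C$, and then pull the resulting almost-invariant sets back to unitaries in $N$ via the decomposition of $N$ over $C$. Since $M$ is AFD of type $\mathrm{III}_0$, the center $C$ of $N=M\rtimes_{\sigma^\phi}\mathbf{R}$ is diffuse, and we may write $C=L^\infty(X,\mu)$ for a standard probability space $(X,\mu)$; the commuting flows $\theta$ and $\tilde\alpha$ restrict to a non-singular action $S$ of $\mathbf{R}^2$ on $(X,\mu)$, which by the case hypothesis $\mathrm{ker}(\mathrm{mod}(\alpha)\circ(\theta|_C))=0$ is free. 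Because $\alpha$ (hence $\tilde\alpha$) preserves the dominant weight, the $\theta$-direction is the usual trace-scaling flow, and $N$ is the AFD type $\mathrm{II}_\infty$ factor; more to the point, $N$ is an amenable von Neumann algebra with diffuse center, so $N$ decomposes as a direct integral $\int_X^\oplus N_x\,d\mu(x)$ with almost every fiber the AFD type $\mathrm{II}_\infty$ factor, and the flows $\theta$, $\tilde\alpha$ act as a combination of the base transformation $S$ on $X$ and fiberwise automorphisms.

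Next I would fix $p\in\mathbf{R}$ and $\epsilon>0$ and apply Theorem \ref{rp} to $S$ on $(X,\mu)$: given a large target cube $P\subset\mathbf{R}^2$ (to be chosen at the end depending on $\epsilon$ and $p$), obtain an even larger cube $Q$ and a $Q$-set $F$ with $\mu\big(S_{\bigcap_{t\in P}(t+Q)}F\big)>1-\epsilon$. Using the isomorphism $(t,x)\mapsto S_t(x)$ from $(Q\times F,\ \mathrm{Leb}\otimes\nu)$ onto $(S_QF,\ \mu|_{S_QF})$ recorded after Theorem \ref{rp}, I transport to $L^\infty(S_QF)$ the function on $Q\times F$ given by $(t=(t_1,t_2),x)\mapsto e^{ipt_2}$ (one coordinate tracks the $\theta$-direction, the other the $\tilde\alpha$-direction, according to which coordinate carries the phase $p$ — here $t_2$ is the $\tilde\alpha$-parameter). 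Extending by $1$ off $S_QF$, this produces a unitary $v\in C=L^\infty(X)$ which, by construction, satisfies $\theta_s(v)=v$ and $\tilde\alpha_t(v)=e^{ipt}v$ up to an error supported on $X\setminus S_{\bigcap_{t\in P}(t+Q)}F$, which has measure $<\epsilon$, for all $s,t$ ranging over $P$ (or rather over the sides of $P$). Letting $\epsilon\to0$ along a sequence and enlarging $P$ to exhaust $\mathbf{R}$ yields a sequence $\{v_n\}\subset U(C)$ with $\|[v_n,\psi]\|\to0$ for $\psi\in N_*$ (since $v_n\in C$, central, these trivially asymptotically commute in the appropriate sense — in fact $\{v_n\}\subset C$ so condition (1) is immediate once we check $v_n$ centralizes $N$, which follows as $C$ is the center), $\theta_s(v_n)-v_n\to0$ and $\tilde\alpha_t(v_n)-e^{ipt}v_n\to0$ compactly uniformly in the strong$^*$ topology. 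Setting $u_n:=v_n$ gives Lemma \ref{key} in this case.

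The main obstacle, and the point requiring the most care, is the passage from ``almost invariant on a large cube'' to the genuine strong$^*$ compact-uniform convergence in conditions (2) and (3): one must verify that the combinatorial Rohlin estimate, which controls $\tilde\alpha_t(v_n)-e^{ipt}v_n$ only for $t$ in the finite region $P$ and only up to an $L^2(\mu)$-error of size $\epsilon$, upgrades — after letting $P\nearrow\mathbf{R}$ and $\epsilon\searrow0$ along the diagonal subsequence — to convergence uniform on every compact subset of $\mathbf{R}$. This is exactly the point handled ``by an argument similar to that of the proof of Theorem 3.3 of Shimada \cite{S}'': the key is that a $Q$-set with $\mu(S_{\bigcap_{t\in P}(t+Q)}F)>1-\epsilon$ gives, for each compact $K\subset\mathbf{R}$ with $K\times K\subset P$, the bound $\|\tilde\alpha_t(v_n)-e^{ipt}v_n\|_2<2\sqrt\epsilon$ for all $t\in K$ simultaneously, with the strong$^*$ norm controlled by the $L^2$-norm on the abelian algebra $C$. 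A secondary technical point is checking that a sequence $\{v_n\}\subset U(C)$ produced this way actually lands in $N_\omega$ after taking the ultralimit, i.e. that it defines a genuine element of $M_{\omega,\alpha}$ once pushed up through Takesaki duality; but this is precisely what Lemma \ref{7} is designed to supply, so no extra work is needed beyond citing it.
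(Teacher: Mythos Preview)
Your proposal is correct and follows essentially the same route as the paper: both take the Rohlin unitaries directly from the center $C=L^\infty(X,\mu)$ by applying Feldman's Rohlin-type theorem (Theorem \ref{rp}) to the free non-singular $\mathbf{R}^2$-action induced by $\theta$ and $\tilde\alpha$, then defining $u_n$ as the phase function $e^{\pm ipt}$ along the $\tilde\alpha$-coordinate on the Rohlin tower and $1$ off it, with conditions (2) and (3) following from the $L^2(\mu)$-estimate $\mu(X\setminus T_{\bigcap_{t\in P}(t+Q)}F)<1/n$ and condition (1) being automatic since $u_n\in C=Z(N)$. Your digression on the direct integral of $N$ into AFD $\mathrm{II}_\infty$ fibers is unnecessary here (the paper omits it), and your closing reference to Lemma \ref{7} is outside the scope of the present lemma, but neither affects correctness.
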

\begin{proof}
Think of $C$ as $L^\infty (X, \mu ) $ for some probability measured space $(X, \mu )$. Let $T$ be an action of $\mathbf{R}^2$ defined by the following way.
\[ f\circ T_{(s,t)}=\theta _{-s} \circ \tilde{\alpha}_{-t} (f) \]
for $f\in L^\infty (X, \mu )$, $(s,t) \in \mathbf{R}^2$. Fix a natural number $n\in \mathbf{N}$. Set $P:=[-n,n]^2$. Then by Theorem \ref{rp}, there exists a large cube $Q$ and a $Q$-set $F$ of $X$ with 
\[ \mu (T_{\bigcap _{t\in P}(t+Q)}F)>1-\frac{1}{n}.\]
 Define a function $u_n $ on $X$ by the following way.
\[
  u_n = \begin{cases}
    e^{-ipt} & (x=T_{(s,t)}(y), \ (s,t)\in Q, \ y\in F) \\
    1 & (otherwise).
  \end{cases}
\]
Then by Proposition \ref{A.4}, the function $u_n$ is Borel measurable. Then for $x \in T_{\bigcap _{t\in P}(t+Q)}F$ and $(s,t)\in P$, we have 
\[ \theta _s(u_n)(x) =u_n(x),\]
\[ \tilde{\alpha }_t(u_n)(x)=e^{ipt}u_n(x).\]
Hence we have
\begin{align*}
\| \theta _s(u_n)-u_n \| _\mu^2 &\leq 4\mu (X\setminus T_{\bigcap _{t\in P}(t+Q)}F) \\
                                &\leq \frac{4}{n+1}
\end{align*}
for $s\in [-n,n]$. By the same computation, we have
\[\|  \tilde{\alpha }_t(u_n) -e^{ipt}u_n \| _\mu ^2\leq \frac{4}{n+1}\]
for $t\in [-n,n]$. Hence the sequence $\{ u_n \}$ of unitaries of $C$ satisfies the conditions in Lemma \ref{key}.
\end{proof}

\bigskip

Next, we consider the following case.

\begin{lemm}
\label{z2}
When $\mathrm{ker}(\mathrm{mod}(\alpha )\circ (\theta \mid _{C} ))$ is isomorphic to $\mathbf{Z}^2$, Lemma \ref{key} holds.
\end{lemm}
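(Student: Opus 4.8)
The plan is to reduce this case to the Rohlin property of trace-scaling $\mathbf{Z}^2$-actions on the AFD factor of type $\mathrm{II}_\infty$, by realising $\tilde{\gamma}:=\theta\circ\tilde{\alpha}$ on $N$ as an action \emph{induced} from $\Lambda:=\mathrm{ker}(\mathrm{mod}(\alpha)\circ(\theta\mid_C))$. First I would pin down $C$. Recall $M$ is of type $\mathrm{III}_0$. Faithfulness of $\theta\mid_C$ gives $\Lambda\cap(\mathbf{R}\times\{0\})=0$ and faithfulness of the Connes--Takesaki module (equivalently, of $\tilde{\alpha}\mid_C$) gives $\Lambda\cap(\{0\}\times\mathbf{R})=0$; so $\Lambda$ is a lattice in general position, and in particular $s_\lambda\neq0$ for every $0\neq\lambda=(s_\lambda,t_\lambda)\in\Lambda$. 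Then $\mathbf{R}^2/\Lambda\cong\mathbf{T}^2$ acts faithfully on $C$, and ergodically since the flow of weights $\theta\mid_C$ of the factor $M$ is ergodic; averaging over $\mathbf{T}^2$ one may pick a $\mathbf{T}^2$-invariant probability measure in the class, so that $C\cong L^\infty(\mathbf{T}^2)$ with $\mathbf{T}^2$ acting by translations. Writing $q\colon\mathbf{R}^2\to\mathbf{T}^2$ for the quotient, $\theta_s$ and $\tilde{\alpha}_t$ act on $C$ as translations by $q(s,0)$ and $q(0,t)$.

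Next I would globalise to $N$. Being injective (a crossed product of the injective algebra $M$ by $\mathbf{R}$) of type $\mathrm{II}_\infty$ with center $C$, $N$ is isomorphic to $L^\infty(\mathbf{T}^2)\otimes R_{0,1}$ with $R_{0,1}$ the AFD factor of type $\mathrm{II}_\infty$; and, as $\tilde{\gamma}$ covers the transitive translation action on $\mathbf{T}^2$, the fibre bundle is homogeneous, so a Mackey-type argument (in the spirit of Sutherland--Takesaki \cite{ST2}) identifies $\tilde{\gamma}$ with the action induced from its restriction $\delta:=\tilde{\gamma}\mid_\Lambda$ on a fibre $R_{0,1}$. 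Here $\delta$ is a $\mathbf{Z}^2$-action on $R_{0,1}$ that scales the canonical trace genuinely --- $\delta_\lambda$ scales it by $e^{-s_\lambda}$ with $s_\lambda\neq0$ for $\lambda\neq0$ --- hence each $\delta_g$ ($g\neq0$), not preserving the trace, is not approximately inner, so $\delta$ is centrally free. Concretely, fixing a measurable section $\sigma\colon\mathbf{T}^2\to\mathbf{R}^2$ of $q$ and the cocycle $c(\bar{y},s,t)\in\Lambda$ defined by $\sigma(\bar{y}-q(s,t))=\sigma(\bar{y})-(s,t)+c(\bar{y},s,t)$, one has for $f\in L^\infty(\mathbf{T}^2)\otimes R_{0,1}=N$
\[ \tilde{\gamma}_{(s,t)}(f)(\bar{y})=\delta_{c(\bar{y},s,t)}\bigl(f(\bar{y}-q(s,t))\bigr). \]

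Finally I would build the unitaries. Fix $p\in\mathbf{R}$, let $\chi_p(s,t):=e^{ipt}$ and $\psi:=\chi_p\mid_\Lambda\in\widehat{\mathbf{Z}^2}$; since the restriction map $\widehat{\mathbf{R}^2}\to\widehat{\Lambda}$ is onto, choose a character $\eta$ of $\mathbf{R}^2$ with $\eta\mid_\Lambda=\overline{\psi}$ (e.g.\ $\eta(s,t)=e^{-ipt}$). By the classification of centrally free $\mathbf{Z}^2$-actions on AFD factors (Ocneanu \cite{O}, Sutherland--Takesaki \cite{ST2}, Kawahigashi--Sutherland--Takesaki \cite{KwhST}, Katayama--Sutherland--Takesaki \cite{KtST}), $\delta$ has the Rohlin property; reading it off Rohlin partitions of shape $\mathbf{Z}^2/\Gamma$ (whose dual characters, over all finite-index $\Gamma$, are dense in $\widehat{\mathbf{Z}^2}$) one gets, for our $\psi$, a centralizing sequence $\{w_n\}\subset U(R_{0,1})$ with $\|\delta_g(w_n)-\psi(g)w_n\|^\sharp\to0$ for every $g\in\mathbf{Z}^2$. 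Put
\[ u_n:=(\eta\circ\sigma)\otimes w_n\ \in\ L^\infty(\mathbf{T}^2)\otimes R_{0,1}=N, \]
a sequence of unitaries of $N$. Using the displayed formula, the cocycle identity and $\eta(c(\bar{y},s,t))=\overline{\psi(c(\bar{y},s,t))}$, the phases telescope and one obtains $\|\tilde{\gamma}_{(s,t)}(u_n)-e^{ipt}u_n\|^\sharp\le C\max_{g\in F}\|\delta_g(w_n)-\psi(g)w_n\|^\sharp\to0$, uniformly for $(s,t)$ in a fixed compact set, where $F\subset\mathbf{Z}^2$ is the finite range of the cocycle there and $C$ bounds the distortion of $\tilde{\gamma}$ on $\|\cdot\|^\sharp$ over that set; this gives conditions (2) and (3) of Lemma \ref{key}. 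Since $u_n$ commutes with $L^\infty(\mathbf{T}^2)\otimes1$ and $\{w_n\}$ is centralizing in $R_{0,1}$, the usual description of central sequences of a tensor product (cf.\ Lemma 2.6 of \cite{MT} and Lemma \ref{7}) shows $\{u_n\}$ is centralizing in $N$, which is condition (1).

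The main obstacle is the input used in the last step: establishing, in exactly the form needed, the Rohlin property of a trace-scaling $\mathbf{Z}^2$-action on $R_{0,1}$ --- a \emph{genuine} centralizing sequence of unitaries (not merely a unitary of $(R_{0,1})_\omega$) that is an approximate eigenvector for an arbitrary character, with $\|\cdot\|^\sharp$-estimates uniform over a finite set of group elements. This is where the classification machinery --- ultimately resting on Connes--Haagerup, exactly as the type $\mathrm{II}_\infty$-factor instance of Lemma \ref{key} does via Theorem 6.18 of Masuda--Tomatsu \cite{MT} --- carries the real content. The remaining steps are bookkeeping: the measurable equivariant trivialisation of the $\mathrm{II}_\infty$-bundle over $\mathbf{T}^2$ and the cocycle computation, and checking that the error terms stay uniformly small under $\tilde{\gamma}_{(s,t)}$ over compact sets of $(s,t)$, for which it suffices that $\tilde{\gamma}$ is locally bounded on $\|\cdot\|^\sharp$.
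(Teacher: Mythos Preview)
Your approach is correct and reaches the same endpoint as the paper---the Rohlin property of a trace-scaling $\mathbf{Z}^2$-action on the AFD $\mathrm{II}_\infty$ factor---but the route is genuinely different. The paper does a \emph{two-stage} disintegration: first over $C^\sigma\cong L^\infty(\mathbf{T}_p)$ (where $\sigma_t=\theta_{q_1t}\tilde{\alpha}_{p_1t}$ for one lattice generator), obtaining a $\mathbf{Z}\times\mathbf{R}$-action on a fibre $N_0$ (Lemmas \ref{10}--\ref{14}); then over $Z(N_0)\cong L^\infty([0,1))$, obtaining a $\mathbf{Z}^2$-action on $(N_0)_0$ whose trace-scaling is verified in Lemma \ref{15}. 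You instead recognise immediately that the faithful ergodic $\mathbf{T}^2$-action on $C$ forces $C\cong L^\infty(\mathbf{T}^2)$ by translation, trivialise $N$ over $\mathbf{T}^2$ in one step, and read off $\tilde{\gamma}$ as induced from $\Lambda\cong\mathbf{Z}^2$. Your trace-scaling check ($s_\lambda\neq0$ for $\lambda\neq0$, from $\Lambda\cap(\{0\}\times\mathbf{R})=0$) is exactly the content of the paper's Lemma \ref{15}, stated more cleanly. The induced-action formalism with the cocycle $c(\bar{y},s,t)$ and the character $\eta$ extending $\overline{\psi}$ makes the phase cancellation transparent, whereas the paper builds the eigen-unitary by hand from a Rohlin partition $\{e_{n,m}\}$ and the explicit weight $e^{2\pi i(nt+ms+\gamma t)}$.

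What your sketch buys is conceptual economy; what the paper's buys is explicitness about the analytic details you label ``bookkeeping''. Two of these deserve a sentence each. First, the measurable trivialisation of the $\mathrm{II}_\infty$-bundle over $\mathbf{T}^2$ is precisely the ``Fact'' the paper isolates (and proves in the Appendix), so your Mackey-type step is not free. Second, your estimate $\|\tilde{\gamma}_{(s,t)}(u_n)-e^{ipt}u_n\|^\sharp\le C\max_{g\in F}\|\delta_g(w_n)-\psi(g)w_n\|^\sharp$ hides that the fibrewise $\|\cdot\|^\sharp$-norm varies with $\bar{y}$; the paper handles this by approximating each $\phi_k$ by step functions and demanding the Rohlin estimate for $\{w_n\}$ simultaneously against the finitely many resulting states $\phi_{k,i}$. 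Neither point is a gap, but both are where the actual work hides in either proof.
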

 In this case, there exist two pairs $(p_1, q_1 )$, $(p_2,q_2)$ of non-zero real numbers with $\mathrm{ker}(\mathrm{mod}\circ \theta )=\mathbf{Z}(p_1,q_1)\oplus \mathbf{Z}(p_2,q_2)$. Here, we use our assumption that $\mathrm{mod}(\alpha ) $ is faithful for showing $q_i\not=0$. Set $\sigma _t:=\theta _{q_1t}\circ \tilde {\alpha }_{p_1t}$. In order to show Lemma \ref{z2}, it is enough to show the following lemma.

\begin{lemm}
\label{10}
 For each $r\in \mathbf{R}$, there exists a sequence of unitaries $\{ u_n \}$ of $N$ which satisfies the following conditions.

\textup{(1)} We have $\| [u_n,\phi]\| \to 0$ for any $\phi \in N_*$.

\textup{(2)} We have $\theta _s(u_n)-u_n\to 0$ compact uniformly for $s\in \mathbf{R}$ in the strong* topology.

\textup{(3)} We have $\sigma _t(u_n)-e^{irt}u_n \to 0$ compact uniformly for $t\in \mathbf{R}$ in the strong* topology.

\end{lemm}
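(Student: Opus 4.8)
The plan is, first, to note how Lemma~\ref{10} gives Lemma~\ref{z2}, and then to prove Lemma~\ref{10} by reducing the flow $\sigma$ to a trace-scaling action on the AFD factor of type $\mathrm{II}_\infty$. For the first point, apply Lemma~\ref{10} with $r=p_1p$: since $\tilde\alpha_{p_1t}=\theta_{-q_1t}\circ\sigma_t$, conditions (2) and (3) of Lemma~\ref{10} combine to give $\tilde\alpha_{p_1t}(u_n)-e^{ip_1pt}u_n\to0$ compact uniformly in $t$, and the substitution $t\mapsto t/p_1$ (legitimate since $p_1\neq0$) turns this into condition (3) of Lemma~\ref{key}, while conditions (1) and (2) of the two lemmas coincide.

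For Lemma~\ref{10} I would first record the structure that makes $\sigma$ usable: $\sigma$ commutes with $\theta$ (as $\tilde\alpha$ does); $\sigma_1|_C=\mathrm{id}$, because $(p_1,q_1)\in\ker(\mathrm{mod}(\alpha)\circ(\theta|_C))$; and $\sigma_t$ scales the canonical trace of $N$ by $e^{-q_1t}$ with $q_1\neq0$ (by faithfulness of $\mathrm{mod}(\alpha)$, as already noted before the statement). Since $\mathrm{mod}(\alpha)\circ(\theta|_C)$ has kernel $\mathbf{Z}^2$ while $\theta|_C$ alone is ergodic, the induced action of $\mathbf{T}^2=\mathbf{R}^2/\mathbf{Z}^2$ on $C$ is faithful, ergodic, hence transitive; so $C\cong L^\infty(\mathbf{T}^2)$ with the translation action, and one may pick coordinates $\mathbf{T}^2=\mathbf{T}_a\times\mathbf{T}_b$ in which $\sigma|_C$ is $(a,b)\mapsto(a+t,b)$ and $\theta|_C$ is $(a,b)\mapsto(a+\alpha_1s,b+\alpha_2s)$, with $\alpha_2\neq0$ and $\alpha_1/\alpha_2$ irrational (irrationality being forced by $M$ being of type $\mathrm{III}_0$). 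Both flows preserve Lebesgue measure, so after writing $N\cong L^\infty(\mathbf{T}_a)\otimes Q$ with $Q:=L^\infty(\mathbf{T}_b)\otimes R_{0,1}$ ($R_{0,1}$ the AFD $\mathrm{II}_\infty$ factor, so $Q$ is an AFD $\mathrm{II}_\infty$ von Neumann algebra with centre $L^\infty(\mathbf{T}_b)$) the whole trace-scaling sits on the $R_{0,1}$-part.

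Because $\sigma$ rotates $\mathbf{T}_a$ with period $1$, the pair $(N,\sigma)$ is the mapping torus of $(Q,\beta)$, where $\beta$ --- the monodromy, which equals $\sigma_1$ acting on $Q$ --- is a trace-scaling automorphism of $Q$ fixing its centre $L^\infty(\mathbf{T}_b)$ pointwise; a $\sigma$-eigenunitary in $N$ of eigenvalue $e^{irt}$ corresponds to a $\beta$-eigenunitary in $U(Q)$ of eigenvalue $e^{-ir}$, approximate centrality being preserved. As $\theta$ commutes with $\sigma$, conjugating $\theta_s$ by $\sigma_{-\alpha_1s}$ yields a flow $\tilde\theta$ on $Q$ commuting with $\beta$ (one checks $\tilde\theta_s$ does descend to $Q$, using that it commutes with the translation flow $\sigma$), under which $\theta$-near-invariance of $u_n$ becomes $\tilde\theta$-near-invariance of the corresponding $v_n$; moreover $\tilde\theta|_{L^\infty(\mathbf{T}_b)}$ is a free rotation flow of period $1/\alpha_2$, so a second mapping-torus construction presents $(Q,\tilde\theta)$ as the mapping torus of $(R_{0,1},\beta')$ with $\beta':=\tilde\theta_{1/\alpha_2}$ on the transversal $R_{0,1}$, and $\beta$ descends to an automorphism of that $R_{0,1}$ commuting with $\beta'$. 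Using the uniqueness of the trace-scaling automorphism (Connes \cite{C2}) and a measurable-selection/straightening argument in the style of Sutherland--Takesaki \cite{ST2}, I would arrange that over this transversal the commuting pair $(\beta,\tilde\theta)$ is in product form, say $\beta=\mathrm{id}\otimes\rho$ and $\tilde\theta_s=(b\mapsto b+\alpha_2s)\otimes\eta_s$ for a commuting trace-scaling automorphism $\rho$ and flow $\eta$ on the AFD $\mathrm{II}_\infty$ factor --- the promised trace-scaling action of $\mathbf{Z}\times\mathbf{R}$ (or, discretizing $\eta$, of $\mathbf{Z}^2$), whose module is nondegenerate in the $\rho$-direction since $q_1\neq0$. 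By the Rohlin property for such actions --- the substance of Connes \cite{C2} and Haagerup \cite{H} already behind the $\mathrm{II}_\infty$ case of Lemma~\ref{key} (Theorem 6.18 of Masuda--Tomatsu \cite{MT}), supplemented by the classification of amenable group actions (Katayama--Sutherland--Takesaki \cite{KtST}) for the $\mathbf{Z}^2$-part --- one gets approximately central $v_n\in U(R_{0,1})$ with $\rho(v_n)-e^{-ir}v_n\to0$ and $\eta_s(v_n)-v_n\to0$ compact uniformly in $s$; transporting $u_n:=1\otimes v_n$ back through the mapping-torus identifications gives the sequence required by Lemma~\ref{10}.

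I expect the main obstacle to be this straightening step: one must put the commuting pair $(\beta,\tilde\theta)$ --- with $\beta$ fixing the centre $L^\infty(\mathbf{T}_b)$ pointwise and $\tilde\theta$ rotating it freely --- into product form over the centre, which amounts to an equivariant, measurable-field refinement of Connes' cocycle-conjugacy theorem rather than its pointwise form, so that the amenable-group machinery has to be brought in here; it is also essentially the only place where faithfulness of $\mathrm{mod}(\alpha)$ is genuinely used, namely to keep the $\rho$-direction of the module (hence the relevant action) Rohlin. A subsidiary, largely routine, point is the bookkeeping of the mapping-torus correspondences --- verifying that $\sigma$-eigenunitaries, approximate centrality in $N$, and $\theta$-near-invariance really do translate into the stated conditions on $\rho$ and $\eta$ in $R_{0,1}$.
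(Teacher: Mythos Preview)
Your overall architecture --- identify $C\cong L^\infty(\mathbf T^2)$ via the faithful ergodic $\mathbf T^2$-action, then perform a two-step disintegration over the torus directions to land on a commuting $\mathbf Z^2$-action on the AFD $\mathrm{II}_\infty$ factor, and invoke its Rohlin property --- is exactly the paper's, though the paper disintegrates in the opposite order (first over $C^\sigma$ via $\theta$, Lemma~\ref{11}, then over $Z(N_0)$ via $\sigma^0$). Your first paragraph, reducing Lemma~\ref{z2} to Lemma~\ref{10}, is correct.

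The genuine divergence is your ``straightening into product form'' $\beta=\mathrm{id}\otimes\rho$, $\tilde\theta_s=(\text{rotation})\otimes\eta_s$, and this is precisely the obstacle you flag --- but it is unnecessary. With the natural mapping-torus trivialization (identify fibres along $\tilde\theta$), $\beta$ automatically becomes constant $\mathrm{id}\otimes\rho$ because it commutes with $\tilde\theta$; but $\tilde\theta$ itself does \emph{not} become a product flow --- it is the rotation with monodromy $\beta'=\tilde\theta_{1/\alpha_2}$ at the seam. Asking for a genuine product form amounts to embedding $\beta'$ in a one-parameter group commuting with $\rho$, and there is no reason this holds. The paper simply keeps the monodromy: on the fibre $(N_0)_0$ one has the commuting pair $(\sigma^{0,0},\beta^{0,0})$ generating a $\mathbf Z^2$-action, and one constructs the required unitary directly from a Rohlin tower $\{e_{n,m}\}$ for this action (proof of Lemma~\ref{14}), then transports it back through the two disintegrations by explicit averaging formulas (Lemma~\ref{13} $\Rightarrow$ Lemma~\ref{10}).

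There is also a gap in your outerness check. ``Nondegenerate in the $\rho$-direction'' is not enough: to obtain Rohlin towers for the full $\mathbf Z^2$-action --- which is what lets you impose the approximate $\rho$-eigenvalue condition \emph{and} the approximate $\eta$-invariance simultaneously --- you need trace-scaling at every nonzero $(n,m)\in\mathbf Z^2$. This is Lemma~\ref{15}, and it is here (not in any straightening) that faithfulness of $\mathrm{mod}(\alpha)$ does its real work: one computes $\beta^0_n\circ\sigma^0_m=(\theta_{np+(ns+m)q_1}\circ\tilde\alpha_{(ns+m)p_1})^0$, and faithfulness forces the $\theta$-parameter $np+(ns+m)q_1$ to be nonzero whenever $(n,m)\neq(0,0)$, so the automorphism genuinely scales the trace and is therefore centrally outer.
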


In order to show this lemma, we need to prepare some lemmas.

\begin{lemm}
 The action $\theta$ on $ C^\sigma $ is ergodic and has a period $p\in (0, \infty)$.
 \label{11}
\end{lemm}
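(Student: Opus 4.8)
We are in the case $\mathrm{ker}(\mathrm{mod}(\alpha)\circ(\theta|_C))\cong\mathbf{Z}^2$, with generators $(p_1,q_1),(p_2,q_2)$ of non-zero reals, and $\sigma_t:=\theta_{q_1t}\circ\tilde\alpha_{p_1t}$. We want to understand the action of $\theta$ on the fixed-point algebra $C^\sigma$. The point of Lemma \ref{11} is that the two-parameter action $\mathrm{mod}(\alpha)\circ(\theta|_C)$ of $\mathbf{R}^2$ descends to an action of $\mathbf{R}^2/\mathbf{Z}^2$ (a 2-torus) on $C$, and $C^\sigma$ is the fixed-point algebra of the subtorus coming from the direction $(p_1,q_1)$; the residual $\theta$-action on $C^\sigma$ is then essentially the action of the complementary circle, which should be ergodic and periodic.

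**The plan.** First I would set up coordinates on the quotient torus. Since $(p_1,q_1)$ and $(p_2,q_2)$ are $\mathbf{R}$-linearly independent (they generate a rank-2 lattice), they form a basis of $\mathbf{R}^2$; let $\Phi\colon\mathbf{R}^2\to\mathbf{R}^2$ be the linear isomorphism sending the standard basis to $(p_1,q_1),(p_2,q_2)$. Then the action $\beta:=(\mathrm{mod}(\alpha)\circ(\theta|_C))\circ\Phi$ of $\mathbf{R}^2$ on $C$ has kernel exactly $\mathbf{Z}^2$, so it factors through a faithful action $\bar\beta$ of the 2-torus $\mathbf{T}^2=\mathbf{R}^2/\mathbf{Z}^2$ on $C$; in these coordinates $\sigma_t$ corresponds to $\beta_{(t,0)}$, so $C^\sigma=C^{\{\bar\beta_{(s,0)}\}}$ is the fixed-point algebra of the first circle factor $\mathbf{T}\times\{0\}$.

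Next I would invoke faithfulness of $\theta|_C$ to get ergodicity. Because the full flow $\theta$ is built as a dual action on a continuous core, $\theta|_C$ is a faithful \emph{ergodic} flow on $C$ precisely when $M$ is a factor of type $\mathrm{III}_0$ (this is the flow of weights), so $C$ has no non-trivial $\theta$-invariant projections. I would then argue that the restriction of $\theta$ to $C^\sigma$ is still ergodic: any $\theta$-invariant projection in $C^\sigma$ is in particular $\theta$-invariant in $C$, hence trivial. For periodicity: the $\theta$-flow on $C$, when pushed to the torus picture, moves in a fixed direction, call it $v=(a,b)\in\mathbf{R}^2$ (the image of the generator of $\theta$ under $\Phi^{-1}$); since $\sigma_t=\theta_{q_1t}\circ\tilde\alpha_{p_1t}$ acts trivially on $C^\sigma$, the quotient dynamics on $C^\sigma$ is governed by the image of $v$ in the quotient circle $\mathbf{T}^2/(\mathbf{T}\times\{0\})\cong\mathbf{T}$, and $C^\sigma$ can be identified $\theta$-equivariantly with this circle together with its rotation flow (here I would use that $\bar\beta$ is a \emph{faithful} action of $\mathbf{T}^2$, so the disintegration of $C$ over $C^\sigma$ is exactly the decomposition into $(\mathbf{T}\times\{0\})$-orbits and $C^\sigma\cong L^\infty(\mathbf{T})$). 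A rotation flow on a circle is ergodic and periodic with some period $p\in(0,\infty)$ — the period being the time it takes the $\theta$-direction to wrap once around the complementary circle, which is finite and non-zero exactly because the $(p_1,q_1)$ and $\theta$-directions are independent in $\mathbf{R}^2/\mathbf{Z}^2$. I would need to double-check that $v$ is not a rational multiple of $(p_1,q_1)$ modulo the lattice; this follows because otherwise $\theta|_C$ would fail to be faithful (some $\theta_s$ would lie in the kernel, combined appropriately with a $\sigma$), contradicting type $\mathrm{III}_0$.

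**Main obstacle.** The delicate point is \emph{identifying $C^\sigma$ and the residual $\theta$-action concretely as $L^\infty(\mathbf{T})$ with a rotation}, rather than merely as some ergodic $\theta$-space. For this one genuinely uses that $\bar\beta$ is a faithful action of the compact group $\mathbf{T}^2$: a faithful action of $\mathbf{T}^2$ on an abelian von Neumann algebra $C$ makes $C$ a space over $C^{\mathbf{T}^2}$ with fibres $L^\infty(\mathbf{T}^2)$, and partial fixed-point algebras correspond to quotients of the fibre tori. The subtlety is that $C^{\mathbf{T}^2}$ need not be trivial — but the \emph{residual $\theta$-action} still sees only the circle direction, and $\theta$-ergodicity (from type $\mathrm{III}_0$, now applied to $\theta$ restricted to $C^\sigma$) forces $C^{\mathbf{T}^2}$ to be trivial as far as the $\theta$-dynamics is concerned, pinning down $C^\sigma\cong L^\infty(\mathbf{T})$ with the rotation flow. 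Making this disintegration argument rigorous — in particular ensuring measurability of the identification and that "period $p$" is well-defined and strictly positive — is where the real work lies; everything else is bookkeeping with the linear algebra of $\mathbf{R}^2$ and its standard lattice.
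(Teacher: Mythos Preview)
Your approach is correct in outline but overcomplicates the argument, and the ``main obstacle'' you identify --- making rigorous the identification $C^\sigma \cong L^\infty(\mathbf{T})$ via disintegration of a faithful $\mathbf{T}^2$-action --- is in fact not needed at all. The paper proceeds much more directly, and the identification $C^\sigma \cong L^\infty(\mathbf{T}_p)$ is a \emph{consequence} of the lemma (any ergodic periodic flow on an abelian von Neumann algebra is translation on a circle), not an ingredient in its proof.

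Concretely, the paper does three elementary things. First, ergodicity is immediate exactly as you say: a $\theta$-invariant element of $C^\sigma$ is $\theta$-invariant in $C$, hence scalar. Second, an explicit period is read off from the \emph{second} lattice generator $(p_2,q_2)$: since $\mathrm{mod}(\alpha_{p_2})\circ\theta_{q_2}|_C=\mathrm{id}_C$ and $\mathrm{mod}(\alpha_{p_1 t})=\theta_{-q_1 t}\circ\sigma_t$ on $C$, substituting $t=p_2/p_1$ gives $\theta_{q_2-p_2q_1/p_1}|_C=\sigma_{-p_2/p_1}|_C$, hence $\theta_{q_2-p_2q_1/p_1}|_{C^\sigma}=\mathrm{id}$; linear independence of $(p_1,q_1)$ and $(p_2,q_2)$ makes this exponent non-zero. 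Third, to ensure the period is strictly positive one needs $\theta|_{C^\sigma}$ non-trivial, i.e.\ $C^\sigma\neq\mathbf{C}$: the paper argues by contradiction that if $C^\sigma=\mathbf{C}$ then $\sigma|_C$ is ergodic and periodic, so $C\cong L^\infty(\mathbf{T})$ with $\sigma$ a translation flow; since $\theta$ commutes with all translations it too is a translation flow, and then the $\mathbf{R}^2$-action $\theta\circ\sigma$ on $C$ factors through $\mathbf{T}$, forcing its kernel to contain a copy of $\mathbf{R}$ --- contradicting the hypothesis that the kernel is a rank-two lattice.

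So the comparison: your torus-action picture is a correct conceptual framework, but the paper never needs the structure theory of faithful compact-group actions or any disintegration over $C^{\mathbf{T}^2}$ (which, incidentally, is automatically $\mathbf{C}$ here since already $C^\theta=\mathbf{C}$). What your approach buys is a unified viewpoint; what the paper's buys is a two-line computation of the period and a short contradiction for positivity, with no measurability issues to worry about.
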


\begin{proof}
 Ergodicity follows from the ergodicity of $\theta: \mathbf{R}\curvearrowright C$. We show that the restriction of $\theta$ on $C^\sigma $ has a period. We first note that a Borel measurable map $T$ from $\mathbf{T}$ to itself which commutes with every translations of the torus must be a translation because we have  $T(\gamma +t)-t=T(\gamma )$ for $t\in \mathbf{T}$ and for almost all $\gamma \in \mathbf{T}$. Now, we show that $C^\sigma \not=\mathbf{C}$. Assume that $C^\sigma $ were isomorphic to $\mathbf{C}$. Then since $\theta $ would commute with $\sigma$, which is a translation flow on the torus. Hence $\theta $ would be also a translation on the torus. Hence $\theta \circ \sigma $ would define a group homomorphism from $\mathbf{R}^2$ to the group of translations of the torus, which is isomorphic to $\mathbf{T}$. Hence the kernel of $\theta \circ \sigma $ would be isomorphic to $\mathbf{R}\times \mathbf{Z}$, which would contradict to the faithfulness of $\theta $. Combining this with the ergodicity of $\theta $, we have $\theta \mid _{C^\sigma}$ is non-trivial. Since $\mathrm{mod}(\alpha _{p_2})=\theta _{-q_2}\mid _{C}$, we have $(\sigma _{p_2/p_1}\circ \theta _{q_2-p_2q_1/p_1})\mid _C=\mathrm{id}_C$. Since $(p_1,q_1)$ and $(p_2,q_2)$ are independent, this $\theta \mid _{C^\sigma} $ has a non-trivial period.  
\end{proof}

By this lemma, we may assume the following. 

\bigskip

(1) We have  $C^\sigma =L^\infty (\mathbf{T}_p)$, where $\mathbf{T}_p$ is the torus of length $p$, which is isomorphic to $[0,p)$ as a measured space.

(2) We have $\theta _t(f) =f(\cdot -t)$ for $f\in L^\infty (\mathbf{T}_p)$, $t\in \mathbf{R}$.

\bigskip

Let 
\[ N=\int _{[0,p)}^\oplus N_\gamma \ d\gamma \]
be the direct integral decomposition of $N$. For $\gamma _1, \gamma _2 \in \mathbf{R}$,  $N_{\gamma_1}$ and $N_{\gamma _2}$  are mutually isomorphic by the following map.
\begin{equation*} 
\theta _{\gamma _2-\gamma _1, \gamma _1}: N_{\gamma _1}\to N_{\gamma _2},
\end{equation*}
\begin{equation*}
 \theta _{\gamma _2-\gamma _1, \gamma _1}(x_{\gamma _1})=(\theta _{\gamma _2-\gamma _1}(x))_{\gamma _2}
 \end{equation*}
for $x= \int _{[0,p) }^\oplus x_\gamma d\gamma \in N$. These $\theta_{\gamma_1, \gamma_2}$'s satisfy the following two conditions.

\bigskip

\textbf{Conditions.}

(1) The equality $\theta _{0, \gamma}=\mathrm{id}_{N_\gamma }$ holds for each $\gamma \in [0,p)$.

(2) The equality $\theta _{\gamma _3-\gamma _2, \gamma _2}\circ \theta_{\gamma _2-\gamma _1, \gamma _1}=\theta _{\gamma _3-\gamma_1,\gamma _1}$ holds for each $\gamma_1, \gamma _2, \gamma _3\in \mathbf{R}$.

\bigskip

By these $\theta _{\gamma _1, \gamma _2}$'s, all $N_\gamma$'s are mutually isomorphic. Thus it is possible to think of $N$ as $N_0\otimes L^\infty ([0,p))$. 

\bigskip

Now, we need to consider the measurability of $\theta _{t, \gamma}$. 

\bigskip

\textbf{Fact.}

If we identify $N$ with $N_0 \otimes L^\infty ([0,p))$, the map $[0, p)^2\ni(t, \gamma ) \mapsto \theta _{t, \gamma }\in \mathrm{Aut}(N_0)$ is Lebesgue measurable.  

\bigskip

Although this fact is probably well-known for specialists, for the reader's convenience, we present the proof in Appendix (Section \ref{appendix}). 

By measurability of $\theta _{t, \gamma }$, Lusin's theorem and Fubini's theorem, for almost all $\gamma \in [0,p)$, the map $t\mapsto \theta _{-t,t+\gamma}$ and $t\mapsto \theta _{t,\gamma}$ are also Lebesgue measurable. We may assume that $\gamma =0$ and we identify $N_{\gamma _1 }$ with $N_0$ by $\theta _{\gamma _1,0}$ for all $\gamma _1\in [0,p)$, that is, if we think of $N $ as the set of all essentially bounded weak * Borel measurable maps from $[0,p)$ to $N_0$, then the set of constant functions is the following set.
\[ \{ \int_{[0,p)}^\oplus \theta _{\gamma , 0}(x_0)\ d\gamma \mid x_0\in N_0 \} . \]
Take a normal faithful state $\phi _0$ of $N_0$. Then 
\[ \phi:=\frac{1}{p}\int _{[0,p)}^\oplus \phi _0 \circ \theta _{-\gamma , \gamma } \ d\gamma \]
is a normal faithful state on $N$. Choose $\phi _1, \cdots, \phi _n\in N_*$, $\epsilon >0$ and $T>0$. Then by the above identification of $N_*$ with $L^1_{(N_0)_*}([0, p))$, each $\phi _k$ is a Lebesgue measurable map from $[0, p)$ to $(N_0)_*$. Hence it is possible to approximate each $\phi _k$ by Borel simple step functions by the following way.
\[ \| \phi _k -\sum _{i=1}^{l_k}\phi _{k,i}\circ \theta _{-\gamma, \gamma}\chi _{I_i}(\gamma ) \| <\epsilon. \]
for each $k\in \{1, \cdots , n\}$. Here, $\phi _{k,i}\in{ N_0}_*$ for $i=1, \cdots , l_k$, $\{ I_i \} _{i=1}^{l_k}$ is a Borel partition of $[0, p)$.  Next, we look at actions on $N_0$. Let 
\[ \theta _p=\int _{[0,p)}^\oplus  \theta _p^\gamma \ d\gamma , \]
\[ \sigma _t =\int _{[0,p)}^\oplus \sigma _t^\gamma \ d\gamma , \]
\[ \tau =\int _{[0,p)}^\oplus \tau _\gamma \ d\gamma \]
be the direct integral decompositions. Since $\theta $ is trace-scaling and $\tilde{\alpha }$ is trace-preserving, $\sigma $ is trace-scaling. Hence for almost all $\gamma \in [0,p)$, $\sigma ^\gamma $ is $\tau _\gamma$ -scaling. Thus we may assume that $\sigma ^0$ is $\tau_0$-scaling. In order to show Lemma \ref{10}, it is enough to show the following lemma.

\begin{lemm} In the above context, for real number $r\in \mathbf{R}$, there exists a unitary $u_0$ of $N_0$ which satisfies the following conditions.
\label{13}

\textup{(1)} We have $\| [u_0, \phi _{k,i}] \| <\epsilon /(pl_k)$ for $k=1, \cdots , n$, $i=1, \cdots , l_k$.

\textup{(2)} We have $\| \theta _{mp}^0(u_0)-u_0\| _{\phi _0}^{\sharp }<\epsilon /p$ for $m\in \mathbf{Z}$, $| m| \leq p/T+2$.

\textup{(3)} We have $\| \sigma ^0_t(u_0)-e^{-irt}u_0 \| _{\phi _0}^{\sharp}<\epsilon /p$ for all $t\in [-T,T]$. 
\end{lemm}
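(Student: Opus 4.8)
\emph{Proof plan.}

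The plan is to carry out the construction inside the type $\mathrm{II}_\infty$ AFD algebra $N_0$, on which we now have \emph{two} commuting automorphism groups: the flow $\sigma^0$, which scales $\tau_0$ (because $\theta$ scales the trace while $\tilde\alpha$ preserves it, and $q_1\neq 0$ since $\mathrm{mod}(\alpha)$ is faithful), and the single automorphism $\rho:=\theta_p^0$, which scales $\tau_0$ by $e^{-p}$ (it is the $\gamma=0$ fibre component of $\theta_p$, and $\theta_p$ acts trivially on $C^\sigma=L^\infty(\mathbf T_p)$ by Lemma~\ref{11}), so that $\theta_{mp}^0=\rho^m$. Conditions (1) and (3) of Lemma~\ref{13} say precisely that $u_0$ should be almost central and an approximate eigenvector for the flow $\sigma^0$ with eigenvalue $e^{-irt}$, while condition (2) is the additional demand that $u_0$ be almost fixed by the relevant finitely many powers of $\rho$. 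The first input I would invoke is that $\sigma^0$, being a trace-scaling flow, is a Rohlin flow by (the results behind) Theorem~6.18 of Masuda--Tomatsu \cite{MT}, i.e. the input of Connes \cite{C2} and Haagerup \cite{H}. This already supplies a plentiful supply of $\sigma^0$-continuous central sequences of unitaries of $N_0$ transforming by $e^{-irt}$; the whole difficulty is to arrange approximate $\rho$-invariance on top of this.

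To isolate that difficulty, write $\rho=\nu\circ\sigma^0_{p/q_1}$, where $\nu:=\rho\circ\sigma^0_{-p/q_1}$ is $\tau_0$-preserving and commutes with $\sigma^0$. For a $\sigma^0$-eigenvector $u_0$ of eigenvalue $e^{-irt}$ one has $\rho^m(u_0)=\nu^m\big(\sigma^0_{mp/q_1}(u_0)\big)\approx e^{-irmp/q_1}\,\nu^m(u_0)$, so condition (2) is equivalent to requiring in addition that $u_0$ be an approximate eigenvector of $\nu$ with eigenvalue $e^{irp/q_1}$; in other words, what is needed is a unitary, in the central-sequence algebra of $N_0$, that is a \emph{joint} approximate eigenvector for the $\mathbf R\times\mathbf Z$-action generated by $\sigma^0$ and $\nu$, with the prescribed character. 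I would produce it in two stages. First take $u_0$ as given by the Rohlin property of $\sigma^0$. Then $w:=u_0^{*}\nu(u_0)$ lies in the $\sigma^0$-fixed part $P$ of the $\sigma^0$-continuous central sequences and, together with its $\nu$-translates, is a unitary $\nu$-$1$-cocycle in $P$; replacing $u_0$ by $u_0z$ for a unitary $z\in P$ with $z\,\nu(z)^{*}$ close to $e^{-irp/q_1}w$ leaves $u_0z$ almost central and a $\sigma^0$-eigenvector of the same eigenvalue, while now $\nu(u_0z)\approx e^{irp/q_1}u_0z$, which by the displayed identity gives $\rho^m(u_0z)\approx u_0z$ for $|m|$ in the required range. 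The existence of such a $z$ — i.e. the approximate vanishing of the unitary $1$-cohomology of the $\mathbf Z$-action $\nu$ on $P$, \emph{including} against the scalar cocycle $e^{-irp/q_1}$ — is a Rohlin-tower argument for $\nu$ of the kind used for $\mathbf Z$-actions on AFD algebras, the scalar perturbation being absorbed because a Rohlin $\mathbf Z$-action has approximate eigenvectors for every eigenvalue.

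The step I expect to be the genuine obstacle is showing that $\nu$ is \emph{centrally free} on $N_0$ — equivalently, that the $\mathbf R\times\mathbf Z$-action $(\sigma^0,\nu)$ is centrally free — so that the correction above can be performed on $P$, and so that the Rohlin machinery for $\sigma^0$ can be supplemented by a Rohlin tower for $\nu$. This is exactly where the hypothesis that $\mathrm{mod}(\alpha)$ is faithful must be used: if, for instance, $\rho$ were of the form $\sigma^0_{c}$ (so $\nu=\mathrm{id}$), then no $\sigma^0$-eigenvector could be $\rho^m$-fixed for general $r$ and Lemma~\ref{13} would fail, and it is precisely faithfulness of $\mathrm{mod}(\alpha)$, together with $\mathrm{ker}\big(\mathrm{mod}(\alpha)\circ(\theta|_{C})\big)$ having rank $2$ and $\theta|_{C^\sigma}$ being a non-trivial ergodic rotation (Lemma~\ref{11}), that excludes such a degeneracy. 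Since $\nu$ is $\tau_0$-preserving it is at worst approximately inner, so central freeness reduces to checking that $\nu^m$ is not centrally trivial for any $m\neq 0$; I would derive this from the fibrewise structure of the canonical extension $\tilde\alpha$ on $N$ and the fact that, $M$ being of type $\mathrm{III}_0$, the flow of weights on which $\mathrm{mod}(\alpha)$ acts faithfully is a genuinely non-transitive ergodic flow, so that no power of $\nu$ can collapse to a centrally trivial automorphism of $N_0$.

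Finally, all of the above must be carried out with honest bounded sequences in $N_0$, and with estimates uniform enough to deliver, for each $n$, a single $u_0\in U(N_0)$ meeting the tolerances of Lemma~\ref{13}; promoting the constant fields $\int_{[0,p)}^{\oplus}\theta_{\gamma,0}(u_0)\,d\gamma$ to unitaries of $N$ (using approximate $\rho^m$-invariance to handle the seam in $\theta$, almost centrality in $N_0$ to handle asymptotic centrality in $N$ via the state $\phi$ built from $\phi_0$, and the $\sigma^0$-eigenvector property together with $\theta$-invariance to get the $\tilde\alpha$-eigenvector property) then furnishes Lemma~\ref{10}, hence Lemma~\ref{z2}, hence the $\mathbf Z^2$-kernel case of Lemma~\ref{key}.
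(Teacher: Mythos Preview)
Your plan correctly reduces Lemma~\ref{13} to a Rohlin-type statement for the $\mathbf{R}\times\mathbf{Z}$-action $(\sigma^0,\rho)$ on $N_0$ --- this is exactly the paper's Lemma~\ref{14} --- but your proposed proof of that statement diverges from the paper's and carries a real gap. The paper does \emph{not} invoke the Rohlin property of the trace-scaling flow $\sigma^0$ via Theorem~6.18 of \cite{MT}. Instead, since $\sigma^0|_{Z(N_0)}$ is the period-$1$ rotation of $\mathbf{T}$, it decomposes $N_0$ a \emph{second} time over its center; crucially, it replaces $\rho$ not by your trace-preserving $\nu=\rho\circ\sigma^0_{-p/q_1}$ but by $\beta^0:=\rho\circ\sigma^0_s$, with $s\in(0,1)$ chosen so that $\beta^0$ acts \emph{trivially on $Z(N_0)$}. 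Then both $\beta^0$ and $\sigma^0_1$ descend to the fibre $(N_0)_0$, which is now a genuine AFD $\mathrm{II}_\infty$ factor, giving a \emph{discrete} $\mathbf{Z}^2$-action $\{\beta^{0,0}_n\circ\sigma^{0,0}_m\}$. Lemma~\ref{15} shows by a short direct computation --- and this is where faithfulness of $\mathrm{mod}(\alpha)$ is actually consumed --- that every nontrivial element of this $\mathbf{Z}^2$-action scales the fibre trace, hence is centrally outer, hence the $\mathbf{Z}^2$-action has the Rohlin property; an explicit projection tower $\{e_{n,m}\}$ then yields $u_0$ by hand.

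Your route leaves this second decomposition undone, and that is the gap. First, $N_0$ is not a factor (its center is $L^\infty(\mathbf{T})$), so Theorem~6.18 of \cite{MT} does not apply to $\sigma^0$ as written. Second, your $\nu$ is trace-preserving, so its central freeness cannot come from scaling; and since $\nu=(\tilde\alpha_{-pp_1/q_1})^0$ acts on $Z(N_0)$ as rotation by $-s-p/q_1$, it will in general act nontrivially there and admit no fibrewise decomposition of its own. The heuristic you offer (``$M$ is of type $\mathrm{III}_0$, so the flow of weights is non-transitive, so no power of $\nu$ collapses'') does not by itself yield central freeness of $\nu^m$ on $N_0$, let alone a Rohlin tower for $\nu$ on the $\sigma^0$-fixed part $P$ of the central sequences, which is what your cocycle-correction step actually requires. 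The paper's choice of $\beta^0$ (trivial on $Z(N_0)$) rather than your $\nu$ (trivial on $\tau_0$) is precisely what makes the reduction to a factor --- and hence the clean trace-scaling argument of Lemma~\ref{15} --- possible.
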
 

First, we show that Lemma \ref{13} implies Lemma  \ref{10}.

\bigskip

\textit{Proof of Lemma \ref{13} $\Rightarrow$ Lemma \ref{10}.}
 Assume that there exists a unitary $u_0$ in $N_0$ which satisfies the conditions in Lemma \ref{13}. We set
\[ u_\gamma :=\theta _{\gamma ,0}(u_0), \] 
\[ u:=\int _{ [0,p) }^\oplus u_\gamma \ d\gamma .\]
Fix $t\in [-T, T]$ and $\gamma \in [0,p)$. For each $\gamma \in [0,p)$, choose $m_\gamma \in \mathbf{Z}$ so that $-t+m_\gamma p+\gamma \in [0,p)$. Then we have
\begin{align*}
(\theta _t(u))_\gamma &=\theta _{t,-t+\gamma}(u_{-t+\gamma}) \\
                      &=\theta _{t, -t+\gamma}(u_{-t+\gamma +m_\gamma p}) \\
                      &=\theta _{\gamma ,0}\circ \theta ^0_{m_\gamma p}\circ \theta _{t-m_\gamma p -\gamma ,-t+m_\gamma p+\gamma}(u_{-t+m_\gamma p+\gamma} )\\
                      &=\theta _{\gamma ,0}\circ \theta ^0_{m_\gamma p}(u_0).
\end{align*}
Hence we have 
\begin{align*}
 \| \theta _t(u)-u\| _{\phi }^\sharp &= \int _{[0,p)}  \| (\theta _t(u))_\gamma -u_\gamma \| _{\phi _0 \circ \theta _{-\gamma ,\gamma }} ^\sharp \ d\gamma \\
                                                   &=\int _{[0,p) } \| \theta _{\gamma ,0}\circ \theta ^0_{m_\gamma p}(u_0 ) -\theta _{\gamma, 0} (u_0) \| _{\phi_0 \circ \theta _{-\gamma ,\gamma }} ^\sharp \ d\gamma \\
                                                   &=\int _{[0,p)} \| \theta ^0_{m_\gamma p }(u_0)-u_0 \| _{\phi _0}^\sharp \ d\gamma \\
                                                   &<\int _{[0,p)} \frac{\epsilon}{p} \ d\gamma \\
                                                   &=\epsilon .
\end{align*}
Here we use that $| m_\gamma | \leq T/p +2$ in the fourth inequality of the above estimation. By the same argument, we also have
\[ \| \sigma _t(u)-e^{-irt}u \| _\phi ^\sharp <\epsilon \]
for $t\in [-T,T]$.
We also have
\begin{align*}
\| [u, \phi _k ]\| &\leq 2\| \phi _k -\sum _{i=1}^{l_k}\phi _{k,i} \otimes \chi _{I_i} \| +\|[u, \sum _{i=1}^{l_k}\phi _{k,i} \otimes \chi _{I_i} ]\| \\
                                 &<2\epsilon +\sum _{i=1}^{l_k}\| [u, \phi _{k,i} \otimes \mathrm{id} ]\| \\
                                 &=2\epsilon +\sum _{i=1}^{l_k}\int _{[0,p)} \| [\theta _{\gamma, 0}(u_0), \phi _{k,i}\circ \theta _{-\gamma, \gamma} ]\| \ d\gamma \\
                                 &=2\epsilon +\sum _{i=1}^{l_k}\int _{[0,p)} \| [u_0, \phi _{k,i}] \| \ d\gamma \\
                                 &<2\epsilon +\sum _{i=1}^{l_k}\int _{[0,p)} \frac{\epsilon }{pl_k} \ d\gamma \\
                                 &=3\epsilon .
\end{align*}
Thus Lemma \ref{10} holds.
\qed

\bigskip

In order to prove Lemma \ref{13}, we first rewrite the lemma in a simpler form. To do this, we show that there exists a number $s\in (0,1)$ with $(\theta ^0_p \circ \sigma ^0_s) \mid _{Z(N_0)}=\mathrm{id}$. Since the restriction of $\sigma^0$ on the center of $N_0$ has a period $1$ and is ergodic, we may assume that $Z(N_0)$ is isomorphic to $L^\infty ([0,1))$, which is canonically identified with $L^\infty(\mathbf{T})$, $\sigma ^0_s(f)=f(\cdot -s)$ for $s\in \mathbf{R}$, $f\in L^\infty (\mathbf{T})$. By this identification, $\theta ^0_p$ commutes with all $\sigma ^0_s$'s. Hence $\theta ^0$ is a translation on the torus. Thus there exists a unique $s\in (0,1)$ with $(\theta^0_p \circ \sigma ^0 _s) \mid _{Z(N_0)}=\mathrm{id}$. Set $\beta ^0:=\theta _p ^0 \circ \sigma ^0_s$. The proof of Lemma \ref{13} reduces to that of the following lemma.
\bigskip

\begin{lemm}
\label{14}
The action $\{ \beta ^0_m \circ \sigma ^0_t\}_{(m,t)}$ of $\mathbf{Z}\times \mathbf{R}$ on $N_0$ has the Rohlin property.
\end{lemm}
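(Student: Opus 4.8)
The plan is to recognise $(N_0,\sigma^0,\beta^0)$ as a suspension over an AFD factor of type $\mathrm{II}_\infty$ and thereby reduce the $\mathbf{Z}\times\mathbf{R}$ Rohlin property to a $\mathbf{Z}^2$ Rohlin property for two commuting automorphisms, to which the theory of Connes and Ocneanu applies. First I would analyse the structure of $N_0$. Since $\sigma^0$ restricts to the ergodic period-$1$ rotation flow on $Z(N_0)\cong L^\infty(\mathbf{T})$ while $\beta^0$ acts trivially there, the same measurable-trivialisation argument used above for $\theta_{t,\gamma}$ (the \textbf{Fact}, proved in the Appendix) lets me disintegrate $N_0$ over $Z(N_0)$ and straighten the fibres by the isomorphisms induced by $\sigma^0$. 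This presents $N_0$ as the suspension
\[ N_0\cong\{\,f\in L^\infty(\mathbf{R};P)\mid f(x+1)=\rho^{-1}(f(x))\ \text{for a.e.\ }x\,\}, \]
where $P$ is the AFD factor of type $\mathrm{II}_\infty$, $\rho\in\mathrm{Aut}(P)$ is the ``first return'' automorphism obtained by straightening $\sigma^0_1$, the flow $\sigma^0$ becomes translation in the $\mathbf{R}$-variable, and $\beta^0$ — which commutes with $\sigma^0$ and is trivial on $Z(N_0)$ — becomes the fibrewise-constant automorphism attached to a single $\psi\in\mathrm{Aut}(P)$ commuting with $\rho$. Because $\sigma_t=\theta_{q_1t}\circ\tilde\alpha_{p_1t}$ with $q_1\neq0$ (this is where faithfulness of $\mathrm{mod}(\alpha)$ enters, as noted after Lemma~\ref{z2}), $\sigma^0_1$, hence $\rho$, genuinely scales the canonical trace of $P$.

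The second step is a descent of exactly the shape of the implication Lemma~\ref{13}~$\Rightarrow$~Lemma~\ref{10}: from a Rohlin tower for the $\mathbf{Z}^2$-action $\{\psi^m\rho^n\}_{(m,n)}$ on $P$, interpolating the tower projections along the fibres $\{x\}\times P$ and over one period of the suspension produces a unitary of $N_0$ on which $\sigma^0$ acts with a prescribed phase $e^{-ipt}$ and $\beta^0$ with a prescribed phase; that this unitary is asymptotically centralising follows from Lemma~\ref{7} together with Lemma~2.6 of Masuda--Tomatsu~\cite{MT}. Thus the $\mathbf{Z}\times\mathbf{R}$ Rohlin property of $\{\beta^0_m\circ\sigma^0_t\}$ on $N_0$ follows from the $\mathbf{Z}^2$ Rohlin property of $\{\psi^m\circ\rho^n\}$ on the factor $P$ — morally, a suspension flow has the Rohlin property if and only if its base transformation does, and here the argument is carried along the extra commuting $\psi$-direction as well.

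The third step establishes that $\mathbf{Z}^2$ Rohlin property. Since $\mathbf{Z}^2$ is amenable and $P$ is an AFD factor, by the theory of Connes~\cite{C3} and Ocneanu~\cite{O} it suffices to prove that $\{\psi^m\rho^n\}$ is centrally free, i.e.\ $\psi^m\rho^n\notin\mathrm{Ct}(P)$ for every $(m,n)\neq(0,0)$. Whenever $\psi^m\rho^n$ scales the trace non-trivially it is not approximately inner (Theorem~1 of Kawahigashi--Sutherland--Takesaki~\cite{KwhST}), hence, since a centrally trivial automorphism of the AFD $\mathrm{II}_\infty$ factor is trace-preserving, it is not centrally trivial; this already covers all pairs with $m=0$, and more generally all pairs outside an exceptional lattice of rank at most one. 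For the remaining, trace-preserving, pairs $(m,n)\neq(0,0)$ one unwinds central triviality of $\psi^m\rho^n$ back through the two disintegrations (over $C^\sigma$ and over $Z(N_0)$) and expresses it as central triviality, on the continuous core $N$, of an automorphism of the form $\theta_a\circ\tilde\alpha_b$ with $b\neq0$ (here one uses that the suspension holonomy is irrational, which is forced by ergodicity of $\theta$ on $C$); the faithfulness of $\mathrm{mod}(\alpha)$, together with the fact that $\ker\!\big(\mathrm{mod}(\alpha)\circ(\theta|_C)\big)$ is exactly $\mathbf{Z}(p_1,q_1)\oplus\mathbf{Z}(p_2,q_2)$ and no larger, then rules this out. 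This yields central freeness and hence Lemma~\ref{14}.

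I expect the main obstacle to be precisely this last verification: the trace-scaling pairs are immediate, but relating central triviality of $\theta_a\circ\tilde\alpha_b$ on the core to a property of the original flow $\alpha$ on $M$ forbidden by faithfulness of $\mathrm{mod}(\alpha)$ — through Kawahigashi--Sutherland--Takesaki's characterisation of approximate innerness and a careful analysis of which automorphisms generated by $\theta$ and $\tilde\alpha$ can be centrally trivial on $N$ — is the delicate point. A secondary, purely technical, burden is making the straightening in the first step rigorous, i.e.\ choosing the isomorphisms between the fibres of $N_0$ in a $Z(N_0)$-measurable way; this is dispatched by the same Lusin/Fubini argument already used for the \textbf{Fact}.
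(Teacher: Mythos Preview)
Your overall architecture coincides with the paper's: disintegrate $N_0$ over $Z(N_0)\cong L^\infty([0,1))$ via the isomorphisms $\sigma^0_{\gamma,0}$, reduce the $\mathbf{Z}\times\mathbf{R}$ Rohlin property to the $\mathbf{Z}^2$ Rohlin property of the fibre action $\{\beta^{0,0}_n\circ\sigma^{0,0}_m\}$ on the AFD $\mathrm{II}_\infty$ factor $(N_0)_0$, and then assemble a Rohlin unitary for $\{\beta^0_m\circ\sigma^0_t\}$ from a $\mathbf{Z}^2$ Rohlin tower of projections $\{e_{n,m}\}$. The paper carries out exactly this, with an explicit formula $u_\gamma=e^{2\pi it\gamma}\sum_{n,m}e^{2\pi i(nt+ms)}\sigma^0_{\gamma,0}(e_{n,m})$ for the unitary.

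The one substantive difference is your third step, which you flag as the main obstacle. You anticipate nontrivial \emph{trace-preserving} pairs $(m,n)$ and propose to exclude their central triviality by lifting back to $N$ and invoking faithfulness of $\mathrm{mod}(\alpha)$ there; that lifting (from central triviality on a single fibre to a statement on the non-factor $N$) is not justified in your sketch and is not obviously available. The paper avoids this entirely: its Lemma~\ref{15} shows by a short direct computation that \emph{every} nontrivial $\beta^{0,0}_n\circ\sigma^{0,0}_m$ genuinely scales the trace. One rewrites
\[
\beta^0_n\circ\sigma^0_m=\bigl(\theta_{np+(ns+m)q_1}\circ\tilde\alpha_{(ns+m)p_1}\bigr)^0
\]
and checks that $np+(ns+m)q_1\neq0$ whenever $(n,m)\neq(0,0)$. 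For $n=0$ this is $mq_1\neq0$. For $n\neq0$, since the left-hand side fixes $Z(N_0)$ while $\theta_{np}$ does not, $\sigma_{ns+m}$ cannot fix $Z(N_0)$ either, so $(ns+m)p_1\neq0$; if in addition $np+(ns+m)q_1=0$ then the displayed identity would force $\mathrm{mod}(\alpha_{(ns+m)p_1})=\mathrm{id}$, contradicting faithfulness. Thus there are no trace-preserving nontrivial pairs at all, the $\mathbf{Z}^2$-action is centrally outer by Connes' result on $\mathrm{Cnt}$ of the AFD $\mathrm{II}_\infty$ factor, and your ``delicate point'' never arises. This is precisely where the faithfulness hypothesis is spent, and it enters as a one-line trace computation rather than through any central-triviality transfer argument.
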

\textit{Proof of Lemma \ref{14}$\Rightarrow$ Lemma \ref{13}.} Assume that the action $\{ \beta^0_m\circ \sigma^0_t\}_{m,t}$ has the Rohlin property. Then there exists a unitary element $u_0$ of $N_0$ with the following conditions.

\bigskip

(1) We have $\|[u_0, \phi_{k,i}]\| <\epsilon/(pl_k)$ for $k=1, \cdots ,n$, $i=1, \cdots ,l_k$.

(2) We have $\| \beta ^0_{m} (u_0)-e^{-irms}u_0\| _{\phi _0}^{\sharp} <\epsilon /(2p)$ for $m\in \mathbf{Z}$, $|m|\leq p/T+2$.

(3) We have $\| \sigma ^0_t(u_0)-e^{-irt}u_0\| _{\phi _0\circ \theta _{mp}^0}^\sharp <\epsilon /(2p) $ for $t\in [-(1+s)(T+2p),(1+s)(T+2p)]$, $m\in \mathbf{Z}$, $|m|\leq p/T+2$.

\bigskip

Since $\beta^0_{m}=\theta ^0_{mp}\circ \sigma ^0_{ms}$, we have 
\begin{align*}
\| \theta ^0_{mp}(u_0)-u_0 \| &=\|e^{-irms}\theta ^0_{mp}(u_0)-e^{-irms}u_0 \|_{\phi _0}^\sharp \\
                                       &\leq \| \theta _{mp}^0 (e^{-irms}u_0-\sigma _{ms}^0(u_0))\|_{\phi_0}^\sharp +\| \beta _{m}^0(u_0)-e^{-irms}u_0\|^\sharp _{\phi _0} \\
                                      &=\| e^{-irms}u_0-\sigma _{ms}^0(u_0)\|_{\phi _0\circ \theta _{mp}^0}^\sharp + \| \beta _{m}^0(u_0)-e^{-irms}u_0\|^\sharp _{\phi _0}\\
                                      &<\frac{\epsilon}{2p}+\frac{\epsilon}{2p} \\
                                      &=\frac{\epsilon}{p}
\end{align*}
for $m\in \mathbf{Z}$, $|m|\leq p/T+2$. Thus Lemma \ref{13} holds.
\qed

\bigskip

In order to show Lemma \ref{14}, we need further to reduce the lemma to a simpler statement. Let 
\[ N_0=\int _{[0,1)}^\oplus (N_0)_\gamma \ d\gamma \]
be the direct integral decomposition of $N_0$ over the center of $N_0$. For each $\gamma _1, \gamma _2\in [0,1)$, there exists an isomorphism from $(N_0)_{\gamma _1}$ to $(N_0)_{\gamma_2}$ defined by 
\[ \sigma ^0 _{\gamma _2-\gamma _1, \gamma _1}((x_0)_{\gamma _1})=(\sigma ^0 _{\gamma _2-\gamma _1}(x_0))_{\gamma _2} \]
for $x_0=\int _{[0,1)}^\oplus (x_0)_\gamma d\gamma \in N_0$. These $\sigma ^0 _{\gamma _2-\gamma _1, \gamma _1}$'s satisfy the conditions similar to conditions (1) and (2) of $\theta _{t, \gamma}$ (See Conditions between Lemma \ref{11} and Lemma \ref{13}).

We identify $(N_0)_\gamma$'s with $(N_0)_0$ by $\sigma ^0_{\gamma, 0}$. Choose a normal faithful state $\psi_0 $ of $(N_0)_0$. Set
\[ \psi:=\int _{[0,1)} ^\oplus \psi _0\circ \sigma ^0_{-\gamma, \gamma} \ d\gamma . \]
This is a normal faithful state on $N_0$. Choose $\psi _1, \cdots \psi _n \in (N_0)_*$, $\epsilon >0$ and $T>0$. By the same argument as above, we may assume that $\psi _k$'s are simple step Borel functions.
\[ \psi _k =\sum _{i=1}^{l_k} \psi _{k,i} \circ \sigma ^0_{-\gamma , \gamma }\chi _{I_i}(\gamma ) \]
for $k=1, \cdots , n$. Here, $\psi _{k,i}\in (N_0)_*$, $\{ I_i\}_{i=1}^{l_k}$ are partitions of $[0,1)$. Since $\beta ^0$ and $\sigma ^0$ fix the center of $N_0$, they are decomposed into the following form.
\[ \beta ^0 =\int _{[0,1)}^\oplus \beta^{0,\gamma } \ d\gamma ,\]
\[ \sigma ^0_1=\int _{[0,1)}^\oplus \sigma ^{0,\gamma } \ d\gamma .\]
Then for each $\gamma \in [0,1)$, $ \{ \beta ^{0,\gamma }_n \circ \sigma ^{0,\gamma }_m \}_{(n,m)\in \mathbf{Z}^2} $
defines an action of $\mathbf{Z}^2$ on $(N_0)_\gamma$, which is isomorphic to the AFD factor of type $\mathrm{II}_\infty $. We show the following lemma, which is essentially important, that is, assumption that $\mathrm{mod}(\alpha )$ is faithful is essentially used for showing this lemma.

\begin{lemm}
\label{15}
 For almost all $\gamma \in [0,1)$, the action  $\{ \beta ^{0,\gamma }_n \circ \sigma ^{0,\gamma }_m \}$ is trace-scaling for $(n,m)\not=0$.
\end{lemm}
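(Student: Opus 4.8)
The plan is to compute the trace–scaling modulus of $\beta_n^{0,\gamma}\circ\sigma_m^{0,\gamma}$ explicitly and reduce the statement to an arithmetic fact about $(p_1,q_1),(p_2,q_2)$ that follows from faithfulness of $\mathrm{mod}(\alpha)$. Since $\theta_t$ scales the canonical trace of $N$ by $e^{-t}$ while $\tilde\alpha_t$ preserves it, the flow $\sigma_t=\theta_{q_1t}\circ\tilde\alpha_{p_1t}$ scales it by $e^{-q_1t}$ and $\theta_p$ by $e^{-p}$. As $\theta_p$ and $\sigma_t$ are trivial on $C^\sigma$ they disintegrate over $C^\sigma$, and on the fibre $N_0$ (at a base point of full-measure validity, as in the reductions already made) one gets that $\theta_p^0$ and $\sigma_t^0$ scale the trace of $N_0$ by $e^{-p}$ and $e^{-q_1t}$; hence $\beta^0=\theta_p^0\circ\sigma_s^0$ scales it by $e^{-(p+q_1s)}$ and $\sigma_1^0$ by $e^{-q_1}$. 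Both $\beta^0$ and $\sigma_1^0$ fix $Z(N_0)$ pointwise — the former by the defining property of $s$, the latter since $\sigma_1|_C=\mathrm{id}$ because $(p_1,q_1)\in\mathrm{ker}(\mathrm{mod}(\alpha)\circ\theta)$ — so they disintegrate over $Z(N_0)$ and, by uniqueness of the disintegration of the trace of $N_0$ over $Z(N_0)$, for almost every $\gamma$ the automorphism $\beta_n^{0,\gamma}\circ\sigma_m^{0,\gamma}$ scales the trace of $(N_0)_\gamma$ by $e^{-(n(p+q_1s)+mq_1)}$. Thus the lemma reduces to
\[
 n(p+q_1s)+mq_1\neq 0\qquad\text{for all }(n,m)\in\mathbf{Z}^2\setminus\{(0,0)\}.
\]

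Next I would locate $p+q_1s$ inside the lattice $L=\mathbf{Z}(p_1,q_1)\oplus\mathbf{Z}(p_2,q_2)=\mathrm{ker}(\mathrm{mod}(\alpha)\circ\theta)$. Writing $\rho_{(t,u)}=\mathrm{mod}(\alpha_t)\circ\theta_u$ for the $\mathbf{R}^2$-action on $C$, one has $\theta_p\circ\sigma_s=\theta_{p+q_1s}\circ\tilde\alpha_{p_1s}$, which restricts on $C$ to $\rho_{(p_1s,\,p+q_1s)}$. On the other hand $\theta_p\circ\sigma_s$ is trivial on $C^\sigma$ (both factors are), and since $\theta$ commutes with $\tilde\alpha$ the identifications $\theta_{\gamma,0}$ intertwine $\theta_p,\sigma_s$ with $\theta_p^0,\sigma_s^0$, so $\theta_p\circ\sigma_s$ is trivial on every fibre $Z(N_\gamma)$ by the definition of $s$; hence it is trivial on all of $C=Z(N)$. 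Therefore $(p_1s,\,p+q_1s)\in L$, say equal to $j(p_1,q_1)+k(p_2,q_2)$ with $j,k\in\mathbf{Z}$, and the second coordinate gives $p+q_1s=jq_1+kq_2$. Moreover $k\neq 0$, for otherwise $(0,p)=(j-s)(p_1,q_1)$ would force $j=s$ (as $p_1\neq 0$), impossible for $j\in\mathbf{Z}$ and $s\in(0,1)$.

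Substituting, the inequality to prove becomes $nkq_2+(nj+m)q_1\neq 0$ for $(n,m)\neq(0,0)$, and here faithfulness of $\mathrm{mod}(\alpha)$ enters essentially. If $n=0$ the left-hand side is $mq_1\neq 0$. If $n\neq 0$ and it vanished, then with $a:=nk\neq 0$ and $b:=nj+m$ we would have $aq_2+bq_1=0$, so $b(p_1,q_1)+a(p_2,q_2)=(bp_1+ap_2,\,0)\in L$ and hence $\mathrm{mod}(\alpha_{bp_1+ap_2})=\mathrm{id}$; faithfulness then forces $bp_1+ap_2=0$, and combined with $bq_1+aq_2=0$ and $(b,a)\neq(0,0)$ this contradicts the linear independence of $(p_1,q_1)$ and $(p_2,q_2)$. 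This proves the inequality, hence the lemma.

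The step I expect to require the most care is the bookkeeping around the two successive disintegrations: one must verify that $\theta_p$, $\sigma_t$, $\beta^0$ and $\sigma_1^0$ are genuinely decomposable over the relevant centres (so that their scaling moduli descend to almost every fibre) and that the identifications $\theta_{\gamma,0}$ really transport $\theta_p,\sigma_s$ to $\theta_p^0,\sigma_s^0$; once this is set up, the lattice computation and the appeal to faithfulness are short.
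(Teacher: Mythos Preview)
Your argument is correct and follows the same overall route as the paper: both reduce to showing $np+(ns+m)q_1\neq 0$ for $(n,m)\neq(0,0)$ by using that $\theta_p\circ\sigma_s$ is trivial on $C$ (equivalently, $(p_1s,p+q_1s)\in L$) together with faithfulness of $\mathrm{mod}(\alpha)$. The difference is only in how the final arithmetic is organised. The paper argues that for $n\neq 0$ one has $\theta_{np}^0|_{Z(N_0)}\neq\mathrm{id}$ (implicitly using that $s$ is irrational, which follows from faithfulness of $\theta|_C$), deduces $(ns+m)p_1\neq 0$, and then invokes faithfulness of $\mathrm{mod}(\alpha)$ directly. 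You instead parametrise $(p_1s,p+q_1s)=j(p_1,q_1)+k(p_2,q_2)$, prove $k\neq 0$, and finish with a clean linear-independence contradiction. Your version is more explicit and sidesteps the unstated irrationality of $s$; the paper's version is terser but leaves more to the reader. Your caveat about the two successive disintegrations is well placed: these identifications are exactly what the paper sets up in the passage between Lemma~11 and Lemma~13, and once granted, both arguments go through.
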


\begin{proof} Take a pair $(n,m)\not=0$. By definition of $\beta^0$ and $\sigma ^0$, we have
\begin{align*}
 \beta ^0_n \circ \sigma ^0_m &=(\theta _{np} \circ \sigma _{ns})^0\circ \sigma ^0_m \\
                              &=(\theta _{np} \circ \sigma _{ns+m})^0 \\
                              &=(\theta _{np} \circ \theta _{(ns+m)q_1} \circ \tilde{\alpha} _{(ns+m)p_1})^0 \\
                              &=(\theta _{np + (ns+m)q_1} \circ \tilde{\alpha} _{(ns+m)p_1})^0. 
\end{align*}
If $n=0$, we need not show anything . Assume that $n\not=0$. Then since $\theta _{np}$ is not identity on the center of $N_0$, $\sigma _{ns+m}$ is not identity on $Z(N_0)$ by looking at the first equation. Hence $(ns+m)p_1\not=0$.  Thus, by the faithfulness of $\mathrm{mod}(\alpha)$ and the last equation, we have $np +(ns+m)q_1\not =0$. Hence $\theta  _{np +(ns+m)q_1}$ scales $\tau $. Besides, $\tilde{\alpha }$ preserves $\tau$. Hence we may assume that $\beta ^0 _n\circ \sigma ^0_m$ scales $\tau _0$. Hence if we decompose $\tau ^0$ by
\[ \tau^0 =\int _{[0,1)}^\oplus \tau ^{0,\gamma } \ d\gamma , \]
$(\beta ^0 _n \circ \sigma ^0_m)^\gamma $ scales $\tau ^{0,\gamma}$ for almost all $\gamma \in [0,1)$. 
\end{proof}

From Lemma \ref{15}, we may assume that $\{ \beta ^{0,0}_n \circ \sigma ^{0,0}_m \}$ is trace-scaling. Now, we will return to prove Lemma \ref{14}, which completes the proof of Lemma \ref{10}.

\bigskip

\textit{Proof of Lemma \ref{14}.}
Let $\psi _0$, $\epsilon$ and $T$ be as explained after the statement of Lemma \ref{14}. By Lemma \ref{15}, the action $\{ \beta ^{0,0}_n \circ \sigma ^{0,0} _m\}$ is centrally outer, and hence has the Rohlin property. Hence, for $A,B\in \mathbf{N}$ with $4(T+1)^2/\epsilon ^2 <B$ and $A>1/\epsilon ^2 $, there exists a family of projections $\{ e_{n,m}\}_{n=1,\cdots, B}^{m=1, \cdots A}$ of $N_{0,0}$ which satisfies the following conditions.

\bigskip

(1) The projections are mutually orthogonal.

(2) We have 
\begin{gather*}
\sum _{n,m} e_{n,m}=1,  \\
\| \sum _{1\leq n\leq B, m=1,A} e_{n,m} \| _{\psi _0 +\psi _0 \circ \beta ^{0,0} }^\sharp \leq 2/\sqrt{A}, \\
\| \sum _{1\leq m\leq A,n\geq B-(T+1),n\leq T+1} e_{n,m}\| _{\psi _0+\sum _{l=-[T]-1}^{[T]+1} \psi _0\circ \sigma ^{0,0}_l }^\sharp \leq 2(T+1)/\sqrt{B}. 
\end{gather*}
Here, $[T]$ is the maximal natural number which is not larger than $T$.

(3) We have $\| [e_{n,m}, \psi _{k,i}]\|<\epsilon/(ABl_k)$ for $n=1, \cdots B$, $m=1, \cdots , A$, $i=1, \cdots l_k$, $k=1, \cdots n$.

(4) We have $\| \sigma ^{0,0}_l(e_{n,m})-e_{n+1,m} \| _{\psi _0}^\sharp <\epsilon /(AB)$ for $n,m$, $l \in \mathbf{Z}$ with $| l| \leq T+1$, $n\leq B-(T+1)$.

(5) We have $\| \beta ^{0,0}(e_{n,m})-e_{n,m+1}\| _{\psi _0}^\sharp <\epsilon /(AB)$ for $n,m\in \mathbf{Z}$ with $m\not =A$. 

\bigskip

Here, we define $e_{B+1,m}=e_{1,m}$ for $m=1, \cdots , A$, $e_{n,A+1}=e_{n,1}$ for $n=1, \cdots , B$. For $(s,t) \in \mathbf{T}\times \mathbf{R}$, we set
\[ u_\gamma :=e^{2\pi it\gamma }\sum _{n,m}e^{2\pi i (nt+ms)}\sigma ^0_{\gamma, 0 }(e_{n,m}) \]
for $\gamma \in [0,1)$. We also set
\[ u:=\int _{[0,1)}^\oplus u_\gamma \ d\gamma \in U(N_0).\]
The above conditons (2) and (4) ensure that we can almost control $\sigma^{0,0}$, which is useful to show that $\sigma ^0_q(u)$ is close to $e^{-2\pi itq}u$. Conditions (2) and (5) is useful to show that $\beta ^0(u)$ is close to $e^{-2\pi is}u$. Condition (3) is useful to show that $[u, \psi _k]$ is small.

\bigskip

By condition (3) of the above, we have
\begin{align*}
\| [u, \psi _k ] \| &\leq \int _{[0,1)} \sum _{n,m}\sum _{i=1}^{l_k} \| [\sigma _{\gamma ,0}^0(e_{n,m}), \psi _{k,i} \circ \sigma _{-\gamma,\gamma}^0] \| \ d\gamma \\
                                  &=\int _{[0,1)} \sum _{n,m}\sum _{i} \| [e_{n,m}, \psi _{k,i}] \| \ d\gamma \\
                                  &<\int _{[0,1)} \sum _{n,m}\sum _{i} \frac{\epsilon}{ABl_k} \ d\gamma \\
                                  &=\epsilon .
\end{align*}
By conditions (2) and (5), we have
\begin{align*}
&\| \beta ^0 (u)-e^{-2\pi is}u \| _\psi ^\sharp \\
&=\int _{[0,1)} \| \beta ^{0,\gamma } \circ \sigma ^0_{\gamma, 0}( \sum _{m,n} (e^{2\pi i ((n+\gamma )t+ms)}e_{n,m})) \\
& -\sum _{m,n} e^{2\pi i ((n+\gamma )t +(m-1)s)}\sigma ^0_{\gamma , 0}(e_{n,m}) \| _{\psi _0 \circ \sigma ^0_{-\gamma, \gamma}}^\sharp \ d\gamma \\
&=\int _{[0,1)} \| \sigma ^0 _{\gamma, 0}(\beta ^{0,0}(\sum _{m,n}e^{2\pi i((n+\gamma )t+ms)}e_{n,m}) \\
&-\sum _{m,n} e^{2\pi i ((n+\gamma )t+(m-1)s)}e_{n,m}) \| _{\psi _0\circ \sigma ^0_{-\gamma , \gamma}}^\sharp \ d\gamma \\
&\leq \sum _{m,n, m\not =A}\int _{[0,1)} \| e^{2\pi i ((n+\gamma )t+ms)}\sigma ^0_{\gamma ,0}(\beta ^{0,0}(e_{n,m})-e_{n,m+1}) \| _{\psi _0 \circ \sigma ^0_{-\gamma, \gamma}}^\sharp  \ d\gamma \\
&+ \int _{[0,1)} \| \sigma ^0_{\gamma, 0} (\beta ^{0,0}(\sum _n e_{n,A})-\sum _n e_{n,A+1})\| _{\psi _0\circ \sigma ^0 _{-\gamma , \gamma}}^\sharp \ d\gamma \\
&\leq \sum _{m,n,m\not =A} \int _{[0,1)} \| \beta ^{0,0}(e_{n,m})-e_{n,m+1} \| _{\psi _0}^\sharp \ d\gamma +2/\sqrt{A} \\
&<AB(\frac{\epsilon}{AB})+2\epsilon \\
&=3\epsilon .
\end{align*}
Condition (2) is used in the fourth inequality and condition (5) is used in the fifth inequality. Next, we will compute $\| \sigma ^0 _q (u)-e^{2\pi itq}u \| _\psi ^\sharp$ for $q\in [-T,T]$. In order to do this, the following observation is useful. Let $\gamma \in [0,1)$ and let $q\in [-T,T]$. Choose $l_\gamma \in \mathbf{Z}$ so that $\gamma -q +l_\gamma \in [0,1)$. Then we have
\begin {align*}
(\sigma ^0_q(u))_\gamma &=\sigma ^0_{q, \gamma -q}(u_{\gamma -q+l_\gamma}) \\
                        &=\sigma ^0_{\gamma,0} \circ \sigma ^{0,0}_{l_\gamma} \circ \sigma ^0_{q-l_\gamma -\gamma, \gamma-q+l_\gamma}(u_{\gamma -q+l_\gamma}) \\
                        &=\sigma ^0_{\gamma , 0} \circ \sigma ^{0,0}_{l_\gamma} (e^{2\pi it(\gamma -q+l_\gamma )}\sum _{n,m}e^{2\pi i(nt+ms)}e_{n,m}) \\
                        &=e^{2\pi it(\gamma -q +l_\gamma )}\sigma ^0_{\gamma, 0} \circ \sigma ^{0,0}_{l_\gamma }(u_0).
\end{align*}
By conditions (2) and (4), we have
\begin{align*}
&\| \sigma ^0_q(u)-e^{-2\pi itq}u\| _{\psi}^\sharp \\
&= \int _{[0,1)} \| \sigma ^0 _{\gamma , 0} (e^{2\pi it(\gamma -q+l_\gamma )} \sigma ^{0,0}_{l_\gamma }(u_0) -e^{-2\pi itq}u_\gamma) \| _{\psi _0 \circ \sigma _{-\gamma, \gamma}^0}^\sharp \ d\gamma \\
&= \int _{[0,1)} \| \sum _{n,m}(e^{2\pi i((\gamma -q+l_\gamma +n)t+ms)}\sigma ^{0,0} _{l_\gamma }(e_{n,m})-e^{2\pi i((\gamma +n-q)t+ms)}e_{n,m}) \| _{\psi _0}^\sharp  \ d\gamma \\
&\leq \int _{[0,1)} \sum _{m,l_\gamma<n\leq B-1} \| e^{2\pi i((\gamma -q+n)t+ms)}\sigma ^{0,0} _{l_\gamma }(e_{n-l_\gamma ,m}) \\
&-e^{2\pi i((\gamma +n-q)t+ms)}e_{n,m}) \| _{\psi _0}^\sharp \ d\gamma +2\epsilon \\
&<(\frac{\epsilon}{AB})AB +2\epsilon \\
&=3\epsilon .
\end{align*}
Condition (2) is used in the third inequality and condition (4) is used in the fourth inequality. Thus $\{ \sigma ^0_q \circ \beta ^0_m\}_{(q,m)}$ has the Rohlin property. Thus Lemma \ref{14} holds.
\qed

\bigskip

\begin{lemm}
\label{r}
When $\mathrm{ker}(\mathrm{mod}(\alpha _s) \circ (\theta \mid_C)_t)\cong \mathbf{R}$, Lemma \ref{key} holds. 
\end{lemm}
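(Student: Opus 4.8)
The plan is as follows, and we may assume $M$ is of type $\mathrm{III}_0$, the only remaining case. Write $\mathrm{ker}\bigl(\mathrm{mod}(\alpha)\circ(\theta\mid_C)\bigr)=\mathbf{R}(p_0,q_0)$ with $(p_0,q_0)\neq 0$; since $\theta\mid_C$ is faithful one has $p_0\neq 0$, and since $\mathrm{mod}(\alpha)$ is faithful one has $q_0\neq 0$. Put $\sigma_t:=\theta_{q_0t}\circ\tilde\alpha_{p_0t}$: this is a flow commuting with $\theta$, fixing $C=Z(N)$ pointwise, and satisfying $\tilde\alpha_u=\theta_{-q_0u/p_0}\circ\sigma_{u/p_0}$. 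Exactly as in the reduction of Lemma \ref{z2} to Lemma \ref{10}, it is enough to produce, for each $r\in\mathbf{R}$, a sequence $\{u_n\}\subset U(N)$ such that $\|[u_n,\phi]\|\to 0$ for every $\phi\in N_*$, $\theta_s(u_n)-u_n\to 0$ in the strong$^*$ topology uniformly on compact sets of $s$, and $\sigma_t(u_n)-e^{irt}u_n\to 0$ in the strong$^*$ topology uniformly on compact sets of $t$; applying this with $r=pp_0$ and using the identity for $\tilde\alpha_u$ then yields Lemma \ref{key}.

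I would then disintegrate $N=\int_X^\oplus N_x\,d\mu(x)$ over $C=L^\infty(X,\mu)$, with $\mu$ a probability measure in the canonical class. Since $M$ is of type $\mathrm{III}_0$, the flow $T=\{T_s\}$ that $\theta$ induces on $X$, namely the flow of weights of $M$, is free and ergodic, and $\theta$ is implemented by a Lebesgue measurable field of isomorphisms $\theta_{s,x}\colon N_x\to N_{T_sx}$ (measurability being proved as for the ``Fact'' in the Appendix). Since $\sigma$ fixes $C$ pointwise it disintegrates, $\sigma_t=\int_X^\oplus\sigma_t^x\,d\mu(x)$, with $\theta_{s,x}\circ\sigma_t^x=\sigma_t^{T_sx}\circ\theta_{s,x}$. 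Because $\theta$ scales the canonical trace $\tau$ of $N$ while $\tilde\alpha$ preserves it, $\sigma$ scales $\tau$ by $e^{-q_0t}$ with $q_0\neq 0$; as $\sigma$ also fixes $C$ pointwise, $\tau\mid_C$ cannot be semifinite, and consequently the fibres $N_x$ are AFD factors of type $\mathrm{II}_\infty$.

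The crucial step, which is the analogue of Lemma \ref{15} and the place where faithfulness of $\mathrm{mod}(\alpha)$ enters essentially, is to show that $\sigma^x$ is trace-scaling on $N_x$ for almost every $x$. Passing $\sigma$ to its canonical extension on the core of the semifinite algebra $N$, one finds that it acts on the centre of that core as the identity on $C$ together with translation by $-q_0t$ in the added $\mathbf{R}$-variable; disintegrating this product over $C$ shows that $\sigma^x$ scales the trace of $N_x$ by $e^{-q_0t}$, which is non-trivial since $q_0\neq 0$. Hence, by Theorem 6.18 of Masuda--Tomatsu \cite{MT}, $\sigma^x$ has the Rohlin property for almost every $x$, so for each $\epsilon>0$, $T>0$ and $r$ there is a measurable field $y\mapsto v_y\in U(N_y)$ of unitaries that are $\epsilon$-central in $N_y$ and satisfy $\|\sigma_t^y(v_y)-e^{irt}v_y\|^\sharp<\epsilon$ for $|t|\le T$, relative to a fixed measurable field of faithful normal states.

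Finally I would spread these fibrewise Rohlin unitaries over $X$ by Feldman's Rohlin lemma for the free flow $T$, as in the case where $\mathrm{ker}\bigl(\mathrm{mod}(\alpha)\circ(\theta\mid_C)\bigr)$ is zero. Given $n$, apply Theorem \ref{rp} with $P=[-n,n]$ to obtain a large cube $Q$ and a $Q$-set $F$ with $\mu\bigl(T_{\bigcap_{t\in P}(t+Q)}F\bigr)>1-1/n$, and define $u_n=\int_X^\oplus u_x\,d\mu(x)\in U(N)$ by $u_{T_sy}:=\theta_{s,y}(v_y)$ for $(s,y)\in Q\times F$ and $u_x:=1$ otherwise; this is well defined and measurable by the $Q$-set property, the measure-theoretic facts already used in that case (Propositions \ref{A.4} and \ref{A.5}), and measurability of $\theta_{s,x}$ and of $y\mapsto v_y$. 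The intertwining relation makes $u_n$ an $e^{irt}$-eigenvector of $\sigma$ on $T_QF$ up to the error $\epsilon$, while off $T_QF$ the error is bounded in terms of $\mu(X\setminus T_QF)$, so $\sigma_t(u_n)-e^{irt}u_n$ is small uniformly for $|t|\le T$; on $T_{\bigcap_{t\in P}(t+Q)}F$ the function $u_n$ is $\theta_s$-invariant for $|s|\le n$, so $\theta_s(u_n)-u_n$ is small uniformly for $|s|\le n$; and $\|[u_n,\phi]\|\to 0$ follows, as in the proofs that Lemma \ref{13} implies Lemma \ref{10}, that Lemma \ref{14} implies Lemma \ref{13}, and in Lemma \ref{7}, by approximating $\phi\in N_*$ by step functions valued in the fibre preduals over a fine Borel partition of $X$ subordinate to the tower and using the $\epsilon$-centrality of the $v_y$. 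Taking $\epsilon=\epsilon_n\downarrow 0$ and $T=T_n\uparrow\infty$ as $n\to\infty$ gives the required sequence, which proves Lemma \ref{key}, hence Lemma \ref{r}.

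The step I expect to be the main obstacle is the crucial one above: checking that $\sigma^x$ is trace-scaling for almost every $x$ (the analogue of Lemma \ref{15}), and, closely related, producing a genuinely measurable field $y\mapsto v_y$ of fibrewise Rohlin unitaries of uniform quality. A secondary, more routine, difficulty is the measurable bookkeeping in the last paragraph needed to show that the glued unitary $u_n$ is asymptotically central in all of $N$, not merely fibrewise.
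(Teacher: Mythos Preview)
Your approach is essentially the same as the paper's. The paper also sets $\sigma_t=\theta_{qt}\circ\tilde\alpha_{pt}$, observes that $\sigma$ fixes $C$ pointwise so that $\theta$ acts freely on $C=C^\sigma$, applies Feldman's Rohlin theorem (Theorem \ref{rp} and Proposition \ref{A.5}) to this free $\mathbf{R}$-action to build a tower $A=T_QY$, and then spreads a single Rohlin unitary $u_0$ chosen in the fibre $N_0=\int_Y^\oplus N_{(y,0)}\,d\nu(y)$ along the tower via $\theta_{t,0}$. The only cosmetic difference is that the paper works with $N_0$ as one von Neumann algebra and invokes the Rohlin property of the trace-scaling flow $\sigma^0$ on it directly (via Theorem 6.18 of Masuda--Tomatsu), rather than selecting a measurable field $y\mapsto v_y$ of fibrewise Rohlin unitaries; this sidesteps the measurable-selection issue you flag as your main concern.

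Your worry about the ``crucial step'' is misplaced: establishing that $\sigma^x$ is trace-scaling is much simpler than you make it, and there is no need to pass to a further core of the semifinite algebra $N$. Since $\tau\circ\theta_s=e^{-s}\tau$ and $\tau\circ\tilde\alpha_t=\tau$, one has $\tau\circ\sigma_t=e^{-q_0t}\tau$ with $q_0\neq 0$, and because $\sigma$ fixes $C$ pointwise the disintegration of $\tau$ over $C$ immediately yields $\tau_x\circ\sigma_t^x=e^{-q_0t}\tau_x$ for almost every $x$. (This one line also forces $N_x$ to be of type $\mathrm{II}_\infty$, replacing your garbled remark that ``$\tau\mid_C$ cannot be semifinite''.) The paper records exactly this argument in a single sentence.
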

\begin{proof}
There exists $(p,q)\in (\mathbf{R}\setminus \{0\})^2$ with $\mathrm{ker}(\mathrm{mod}(\alpha ) \circ \theta \mid _C)=(p,q)\mathbf{R}$. Set $\sigma _t:=\theta _{qt}\circ \tilde{\alpha} _{pt}$ for $t\in \mathbf{R}$. In order to show our lemma, it is enough to show that for each $r\in \mathbf{R}$, the action $\sigma $ admits a sequence of unitaries which satisfies the same conditions as in Lemma \ref{10}.
 Take a normal faithful state $\phi _0$ of $N$, $\phi _1, \cdots, \phi _n\in N_*$ with $\|\phi _k \| =1$ ($k=1, \cdots,n$), $\epsilon >0$ and $T>0$. Think of $C$ as a standard probability measured space $L^\infty (\Gamma, \mu )$.  Let 
\[ N=\int _{\Gamma}^\oplus N_{\gamma} \ d\mu (\gamma ), \]
\[ \phi _k =\int _\Gamma^\oplus \phi _{k,\gamma} \ d\mu (\gamma) \]
($k=1, \cdots, n$) be the direct integral decompositions.
Then by Theorem \ref{rp} and Proposition \ref{A.5}, there exists a Borel subset $A$ of $\Gamma$ which satisfies the following three conditions.

\bigskip

(1) There exists a large cube $Q:=[-T',T']$ and a $Q$-set $Y$ such that $A=T_QY$ and the map $Q\times Y \ni (t,x) \mapsto T_t(x)\in A$ is injective.

(2) We have 
\[\mu (\bigcap _{t\in [-T,T]}T_tA) >1-\epsilon\]
and
\[\int  _{\bigcap _{t\in [-T,T]}T_tA}\| \phi _{k,\gamma}\| \ d\mu (\gamma )>1-\epsilon\]
for $k=1, \cdots ,n$.

(3) There is a measure $\nu $ on $Y$ such that the map $Q\times Y\ni (t,x) \mapsto T_t(x) \in A$ is a non-singular isomorphism with respect to $\mathrm{Lebesgue} \otimes \nu $ and $\mu$ (Note that two measures $\mu +\sum _k \int _{\Gamma}\| \phi _{k,\gamma}\| \ d\mu (\gamma )$ and $\mu $ are mutually equivalent).

\bigskip

Here, we do not assume the existence of invariant probability measures for $\theta \mid _C$. Then $N$ is isomorphic to 
\[ N_{\Gamma \setminus A}\oplus  \int _{ [-T',T']} ^\oplus N_s \ ds.\]
Here, 
\[ N_s=\int _{Y}^\oplus N_{(y,s)} \ d\nu (y) .\]
For $s,t\in [-T',T']$, $\theta $ defines an isomorphism $\theta _{s-t,t}$ from $N_t$ to $N_s$ by $\theta _{s-t,t}(x_t)=(\theta _{s-t}(x))_s$. As in Lemma \ref{10}, we identify $N_t$ with $N_0$ by this isomorphism. By this identification, we approximate $\phi _k$'s by simple step functions.
\[ \| \phi _k \chi _{A}-\sum _{i=1}^{l_k}\phi _{k,i}\circ \theta _{-t,t} \chi _{I_i}(t) \| <\epsilon \]
for $k=1,\cdots, n$, where $\phi _{k,i}\in (N_0)_*$ and $\{I_i\}_{i=1}^{l_k}$ are partitions of $[-T',T']$. Here, we note that it is possible to choose $\phi _{0,i}$'s so that they are positive.  This is shown by the following way. Since $\phi _0: [-T',T'] \to (N_0)_*$ is measurable, by Lusin's theorem, it is possible to choose a sufficiently large compact subset $K$ of $[-T',T']$ on which $\phi _0$ is continuous. Choose a finite partition $\{s_i\}_{i=1}^{l_0}$ of  $K$ so that for every $ s\in K$, there exists a number $i$ such that $\phi _0(s) $ is close to $\phi (s_i)$. It is possible to choose a partition $\{ I_i\}$ of $K$ so that $\phi _0(s)$ is close to $\phi _0(s_i)$ on $I_i$. Then $\sum \phi _{0}(s_i)\chi _{I_i}$ well approximates $\phi _0$. Since $\sigma $ fixes the center of $N$, this is decomposed into the direct integral.
\[ \sigma =\sigma _{\Gamma \setminus A} \oplus \int _{[-T',T']}\sigma ^{t} \ dt.\]
Since $\sigma $ scales the canonical trace on $N$, for almost all $t\in \mathbf{R}$, the action $\sigma ^t$ is trace-scaling, and hence has the Rohlin property by Theorem 6.18 of Masuda--Tomatsu \cite{MT}. Hence, by the same argument as in the proof of Lemma \ref{14}, it is possible to choose a unitary element $u_0$ of $N_0$ satisfying the following conditions.

\bigskip

(1) We have $\| [u_0, \phi _{k,i}]\| < \epsilon /(2l_kT')$  for $k=1, \cdots, n$, $i=1, \cdots, l_k$.

(2) We have $\| \sigma ^{0}_t(u_0)-e^{-ipt}u_0 \| _{\phi _{0,i}}^\sharp <\epsilon /(2l_0T')$ for $t\in[-T,T]$, $i=1, \cdots, l_0$.

\bigskip

Set $u_t:=\theta _{t,0}(u_0)$ for $t\in [-T',T']$ and set
\[ u:=\chi _{X\setminus A} \oplus \int _{[-T',T']}^\oplus u_t \ dt.\]
Hence by the same aregument as in the proof of Lemma \ref{13} $\Rightarrow $ Lemma \ref{10}, Lemma \ref{key} holds.
\end{proof}

\begin{lemm}
\label{z}
When  $\mathrm{ker}(\mathrm{mod}(\alpha _s)\circ (\theta \mid _C) _t) \cong \mathbf{Z}$, Lemma \ref{key} holds.
\end{lemm}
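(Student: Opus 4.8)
The plan is to adapt the structure of the $\mathbf{Z}^2$-kernel case (Lemmas \ref{z2}--\ref{15}) to the situation where $\mathrm{ker}(\mathrm{mod}(\alpha)\circ(\theta|_C))$ is one-dimensional and generated by an integer lattice vector, i.e.\ isomorphic to $\mathbf{Z}$. So there is a pair $(p_0,q_0)\in(\mathbf{R}\setminus\{0\})^2$ with $\mathrm{ker}(\mathrm{mod}(\alpha)\circ\theta|_C)=\mathbf{Z}(p_0,q_0)$; here we again use faithfulness of $\mathrm{mod}(\alpha)$ to conclude $q_0\neq 0$ (if $q_0=0$ then $\tilde\alpha_{p_0}$ would be trivial on $C$, so $\mathrm{mod}(\alpha_{p_0})=\mathrm{id}$, contradicting faithfulness). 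Set $\sigma_t:=\theta_{q_0 t}\circ\tilde\alpha_{p_0 t}$, so that $\sigma_1|_C=\mathrm{id}_C$, i.e.\ $C\subset C^\sigma$ where now $\sigma$ is to be read as the $\mathbf{R}$-action $\{\sigma_t\}$; unlike the $\mathbf{Z}^2$ case, the lattice is rank one, so the relevant commuting action on $C$ is the \emph{two-parameter} action generated by $\theta$ together with \emph{one} of $\theta,\tilde\alpha$ modulo the kernel direction. Concretely: since $\mathbf{Z}(p_0,q_0)$ has rank one, the quotient $\mathbf{R}^2/\mathbf{Z}(p_0,q_0)$ is $\mathbf{R}\times\mathbf{T}$, and $\mathrm{mod}(\alpha)\circ\theta|_C$ descends to a \emph{faithful} action of $\mathbf{R}\times\mathbf{T}$ on $C$. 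This is the key structural difference from Lemma \ref{z2}: there we got a faithful action of $\mathbf{T}^2$ (a compact group) on $C$, here we get $\mathbf{R}\times\mathbf{T}$.

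First I would exploit this: pick a complement, say the vector $(p_0,q_0)$ and a second independent vector which we may normalize; then $\theta|_C$ restricted to the fixed-point algebra $C^\sigma$ of the periodic direction plays the role that $\theta|_{C^\sigma}$ played in Lemma \ref{11}, and the argument of Lemma \ref{11} shows $\theta|_{C^\sigma}$ is ergodic. The difference is that now, because only one direction is periodic, $\theta|_{C^\sigma}$ need \emph{not} have a period — it is a genuine ergodic flow, which by ergodic theory we may take to be translation on $\mathbf{R}$ (or the relevant nonsingular $\mathbf{R}$-action on a Lebesgue space if no invariant probability measure exists, as in Lemma \ref{r}). So after the disintegration $N=\int_\Gamma^\oplus N_\gamma\,d\mu(\gamma)$ over $C$, I would use the Feldman--Lind Rohlin-type theorem (Theorem \ref{rp}, together with Propositions \ref{A.4}--\ref{A.5}) in dimension $d=1$ for the $\theta$-direction inside $C^\sigma$, exactly as in Lemma \ref{r}, to produce an almost-invariant approximate-Rohlin tower for $\theta$ on the quotient space, while in the fibers $N_\gamma$ — which are AFD of type $\mathrm{II}_\infty$ — I would produce the unitary from the Rohlin property of the fiberwise action, which I must first show is trace-scaling a.e.

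The analogue of Lemma \ref{15} is the crux. Writing the fiberwise action over the center $Z(N_0)$ (after the second disintegration, as in the passage before Lemma \ref{15}), I must check that the generator $\beta^{0,\gamma}$ coming from $\theta_{p}\circ(\text{twist})$ composed with $\sigma^{0,\gamma}$ is trace-scaling for every nonzero element of the acting group. The computation is formally identical to Lemma \ref{15}: one expands $\beta^0_n\circ\sigma^0_m=(\theta_{a}\circ\tilde\alpha_{b})^0$ for suitable $a=a(n,m)$, $b=b(n,m)$ linear in $(n,m)$, observes that $\sigma_{*}\neq\mathrm{id}$ on $Z(N_0)$ forces $b\neq 0$, and then faithfulness of $\mathrm{mod}(\alpha)$ forces $a\neq 0$ whenever $b\neq 0$ and we are away from the kernel lattice — but here the lattice is only $\mathbf{Z}(p_0,q_0)$, rank one, so I must be careful that after quotienting by the periodic direction the remaining parameter genuinely sees faithfulness. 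Since $\theta_a$ scales the canonical trace $\tau$ whenever $a\neq 0$ and $\tilde\alpha_b$ preserves $\tau$, the composite scales $\tau$, hence scales $\tau_\gamma$ for a.e.\ $\gamma$. Then $\{\beta^{0,\gamma}\circ\sigma^{0,\gamma}\}$ is trace-scaling and in particular centrally outer, so it has the Rohlin property by Theorem 6.18 of Masuda--Tomatsu \cite{MT}; a fiberwise tower construction identical to the proof of Lemma \ref{14} (now a $\mathbf{Z}\times\mathbf{R}$ or $\mathbf{R}$ Rohlin tower rather than $\mathbf{Z}^2$) then yields $u_0$. Finally, gluing $u_\gamma:=\theta_{\gamma,0}(u_0)$ (or $\sigma^0_{\gamma,0}(u_0)$) into $u:=\int^\oplus u_\gamma\,d\mu(\gamma)$ and estimating $\|\theta_s(u)-u\|^\sharp_\phi$, $\|\tilde\alpha_t(u)-e^{ipt}u\|^\sharp_\phi$, and $\|[u,\phi_k]\|$ by splitting over the good set $\bigcap_{t\in[-T,T]}T_t A$ and over fibers — verbatim the bookkeeping in the proofs of ``Lemma \ref{13}$\Rightarrow$Lemma \ref{10}'' and Lemma \ref{r} — finishes the proof. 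The main obstacle I anticipate is bookkeeping: correctly arranging the two disintegrations (over $C$, then over $Z(N_0)$) so that the periodic $\sigma$-direction, the ergodic $\theta$-direction on $C^\sigma$, and the trace-scaling fiber action are simultaneously under control, and ensuring the Feldman--Lind tower in the $\theta$-direction is compatible with the measurable field of isomorphisms $\theta_{t,\gamma}$ (the measurability Fact invoked before Lemma \ref{13}); there is no genuinely new analytic input, but the indices must line up.
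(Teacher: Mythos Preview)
Your overall architecture---reduce to the flow $\sigma_t=\theta_{q_0t}\circ\tilde\alpha_{p_0t}$, disintegrate over the $\sigma$-fixed part of the center, use Feldman--Lind for the free $\theta$-direction, and use a fiberwise Rohlin property---matches the paper. But there is one genuine gap and one place where you work harder than necessary.

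\textbf{The gap.} You assert that $\theta|_{C^\sigma}$ ``need not have a period---it is a genuine ergodic flow'' and then invoke Theorem \ref{rp}. But Theorem \ref{rp} requires the action to be \emph{free}, and faithfulness of $\theta$ on $C^\sigma$ is not a formal consequence of the kernel being $\mathbf{Z}(p_0,q_0)$. Knowing that $\mathbf{R}\times\mathbf{T}\cong\mathbf{R}^2/\mathbf{Z}(p_0,q_0)$ acts faithfully on $C$ does \emph{not} by itself force the $\mathbf{R}$-factor to act faithfully on the $\mathbf{T}$-fixed points: if $\theta_t|_{C^\sigma}=\mathrm{id}$ for some $t\neq0$, then on each fiber of $C$ over $C^\sigma$ (a circle on which $\sigma$ acts ergodically) $\theta_t$ is a rotation $\sigma_{s_\gamma}$, but a~priori the rotation amount $s_\gamma$ could vary with $\gamma$. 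The paper isolates this as a separate Claim and gives a measure-theoretic argument (Borel sets $A_s$, $B_s$, plus ergodicity of $\theta$) to show some $B_s$ has positive and hence full measure, forcing $\theta_t=\sigma_s$ globally on $C$; only then does the lattice condition $\ker=\mathbf{Z}(p_0,q_0)$ yield $t=0$. You need this argument and have not supplied it.

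\textbf{The unnecessary complication.} Once $\theta|_{C^\sigma}$ is free and you disintegrate $N$ over $C^\sigma$ (not over $C$---your fibers are then not factors, so your line ``the fibers $N_\gamma$---which are AFD of type $\mathrm{II}_\infty$'' is premature), the fiber flow $\sigma^\gamma$ on $N_\gamma$ is trace-scaling with an ergodic period-$1$ restriction to $Z(N_\gamma)$. The paper simply observes that such a flow is exactly the dual of a modular automorphism of an AFD $\mathrm{III}_\lambda$ factor ($0<\lambda<1$), hence already has the Rohlin property by known results; then it glues via the Lemma \ref{r} pattern and is done. Your plan to perform a second disintegration over $Z(N_0)$ and rerun the Lemma \ref{14}/\ref{15} tower construction would also work, but it is redundant: the Rohlin property of $\sigma^\gamma$ is available off the shelf, so no analogue of Lemma \ref{15} is needed here.
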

\begin{proof}
 Let  $(p,q )\in (\mathbf{R}\setminus \{0\})^2$ be a generator of $\mathrm{ker}(\mathrm{mod}(\alpha _s) \circ (\theta \mid_C)_t)$. Set $\sigma _t:=\theta _{qt}\circ \tilde{\alpha }_{pt}$ for $t\in \mathbf{R}$. Think of $C^\sigma $ as a standard probability space $L^\infty (\Gamma, \mu )$. We first show the following claim.
 
 \bigskip

\textbf{Claim}. The action $\theta :\mathbf{R}\curvearrowright C^\sigma $ is faithful (and hence is free).

\textit{Proof of Claim}. Assume that $\theta _t\mid _{C^\sigma}=\mathrm{id}_{C^\sigma}$. Then $\theta$ is decomposed into the direct integral over $\Gamma$. 
\[ \theta _t=\int _{ \Gamma}^\oplus \theta _t^\gamma \ d\mu (\gamma ) ,\]
\[ N=\int _{ \Gamma}^\oplus N_{ \gamma } \ d\mu (\gamma ). \]
We also decompose $\sigma $ by
\[ \sigma _s =\int _{\Gamma}^\oplus \sigma ^\gamma _s \ d\mu (\gamma ) .\]
Then for almost all $\gamma \in \Gamma$, $\{ \sigma ^\gamma _t\}_{t\in \mathbf{R}}$ defines a periodic ergodic action on the center of $N_\gamma$. Since the restriction of $\theta ^\gamma _t$ on the center of $N_\gamma $ commutes with that of $\sigma ^\gamma _s$'s, $\theta ^\gamma _t \mid _{Z(N_\gamma )}$ is of the form $\sigma ^\gamma _{s_\gamma}\mid _{Z(N_\gamma )}$. We show that there exists $s\in [0,1)$ such that $s_\gamma =s$ for almost all $\gamma $. Since we want to show the faithfulness of the action $\theta $,  we may  assume that $t\not =0$. We think of $C$ as a probability measured space $L^\infty (X, \mu_X)$. Then there exists a projection $p$ from $X$ to $\Gamma$ induced by the inclusion $L^\infty (\Gamma ) \to L^\infty (X)$. Let $T,S $ be two flows on $(X, \mu _X)$ defined by
$f(T_tx)=\theta _{-t}(f)(x)$, $f(S_sx)=\sigma _{-s}(f)(x)$ for $x\in X$, $f\in L^\infty(X, \mu _X)$. We may assume that $X$ is a separable compact Hausdorff space and $T$ and $S$ are continuous. We show that the set
\[ A_s= \{ x\in X \mid T _t\circ S _{r}(x)=x \ \mathrm{for} \  \mathrm{some}\ 0\leq r\leq s\} \]
is Borel measurable. Let $f:\mathbf{R}\times X \to X^2$ is a map defined by $f:\mathbf{R}\times X\ni (s,x) \mapsto (T _t\circ S _s(x),x)\in X^2$. Then we have  $A_s=\pi _X(f^{-1}(\Delta )\cap ([0,s] \times X))$, which is Borel measurable. Here, $\Delta $ is the diagonal set of $X\times X$ and $\pi _X :\mathbf{R} \times X \to X$ is the projection. 

Next we show that there exists $s \in [0,1)$ such that 
\[ B_s:=\{ x\in X \mid T_t\circ S_s(x)=x\} \]
 has a positive measure. If not, the map $s\to \mu _X(A_s)$ would be continuous. By the first part of this proof, for each $\gamma \in \Gamma$, if $x\in X$ satisfies $p(x)=\gamma$, then we have $x\in A_{s_\gamma}$. Hence $\bigcup _{s>0}A_s$ is full measure. On the other hand, since $t\not =0$, we have $\mu (A_0)=0$. Thus there would exist $s \in [0,1)$ with $\mu _X(A_s)=1/2$. However, this would contradict to the ergodicity of $\theta$. Thus there exists $s\in [0,1)$ with $\mu _X(B_s)>0$. 

By using the ergodicity of $\theta $ again, there exists $s\in [0,1)$ such that $B_s$ is full measure. 

Hence there exists $s\in [0,1)$ such that $\sigma _s\mid _C =\theta _t\mid _C$. Since $\mathrm{ker}(\mathrm{mod}(\alpha )\circ (\theta \mid _C))=(p,q)\mathbf{Z}$, we have $s=t=0$, which is a contradiction. Hence Claim is shown.
 \qed
 
 \bigskip

Now, we return to the proof of Lemma \ref{z}. For almost all $\gamma \in \Gamma$, the action $\sigma ^\gamma \mid _{Z(N_0)}$ is ergodic and has a period $1$, and $\sigma ^\gamma $ is trace-scaling. Hence this is the dual action of a modular automorphism of an AFD $\mathrm{III}_\lambda$ ($0<\lambda <1$) factor. Hence $\sigma ^\gamma $ has the Rohlin property. Hence by the same argument as in Lemma \ref{r}, our lemma is shown.
\end{proof}

\begin{rema}
\label{lambda}
 When $M$ is of type $\mathrm{III}_\lambda$, $0<\lambda <1$, then Connes--Takesaki module of a flow on $M$ cannot be faithful. This is shown by the following way. Since $\mathrm{mod}(\alpha )$ commutes with $\theta$, as we have seen, this is a homomorphism from $\mathbf{R}$ to $\mathbf{T}$. Hence $\mathrm{mod}(\alpha ) $ cannot be faithful.
 \end{rema}

\section{Remarks and Examples}
In this section, we present examples which have interesting properties.
\subsection{Model Actions}
\label{model}
In this subsection, we will construct model actions. If there were no flows with faithful Connes--Takesaki modules, then our main theorem would have no value. Hence, it is important to construct a flow which has  a given flow as its Connes--Takesaki module. 

\begin{prop}
Let $M$ be an AFD factor with its flow space $\{C, \theta \}$ and let $\sigma $ be a flow on $C$ which commutes with $\theta $ \textup{(}Here, we do not assume the faithfulness of $\sigma $\textup{)}. Then there exists a Rohlin flow $\alpha $ on $M$ with $\mathrm{mod}(\alpha )=\sigma $. 
\end{prop}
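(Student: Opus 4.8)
The plan is to construct $\alpha$ by building it first on the continuous core and then recovering it on $M$ via Takesaki duality. Concretely, write $M = N \rtimes_\theta \mathbf{R}$ with $N$ the continuous core and $C = Z(N)$. The flow $\sigma$ on $C$ that commutes with $\theta$ tells us how we want the canonical extension of $\alpha$ to act on the center; so the first step is to realize $\sigma$ as (the center-part of) an honest trace-scaling-type flow on $N$. Since $N$ is the AFD factor of type $\mathrm{II}_\infty$ tensored appropriately over $C$, one can disintegrate $N = \int^\oplus N_\gamma\, d\gamma$ and, fibrewise, use the existence of trace-scaling flows on the AFD $\mathrm{II}_\infty$ factor (the uniqueness/existence results of Connes \cite{C2} and Haagerup \cite{H}) to cook up a flow $\tilde\alpha$ on $N$ that commutes with $\theta$, preserves the canonical trace, and whose restriction to $C$ is exactly $\sigma$. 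This is the model-building step and it parallels the reductions already carried out in the proof of Lemma \ref{key}: decompose over the center, handle each piece by a known trace-scaling flow, and reassemble measurably (using the measurability Fact about $\theta_{t,\gamma}$ from the Appendix).

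Second, having $\tilde\alpha$ on $N$ commuting with $\theta$, extend it to $N \rtimes_\theta \mathbf{R} \cong M \otimes B(L^2\mathbf{R})$ exactly as in the discussion following the statement of Theorem \ref{Rohlin}: the flow $\tilde\alpha$ together with the trivial action on the $\mathbf{R}$-variable gives a flow $\tilde{\tilde\alpha}$ on $N \rtimes_\theta \mathbf{R}$, and under Takesaki duality this corresponds to $\alpha \otimes \mathrm{id}_{B(L^2\mathbf{R})}$ for a flow $\alpha$ on $M$. That $\mathrm{mod}(\alpha) = \sigma$ then follows from the construction: by Proposition 13.1 of Haagerup--St\o rmer \cite{HS}, $\mathrm{mod}(\alpha)$ is computed precisely as $\tilde\alpha|_C$, which we arranged to be $\sigma$. (One should check the choice of $\tilde\alpha$ on $N$ is consistent with being the canonical extension of some $\alpha$ on $M$, i.e.\ that the cocycle $[D\phi\circ\alpha^{-1}:D\phi]_s$ structure is respected; this is automatic because we only need $\tilde\alpha$ to commute with $\theta$ and preserve $\tau$, which by Haagerup--St\o rmer characterizes canonical extensions up to the relevant data.)

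Third, and this is where Theorem \ref{Rohlin} does the heavy lifting for us, we must upgrade $\alpha$ to a \emph{Rohlin} flow. The key observation is that $\mathrm{mod}(\alpha) = \sigma$ need not be faithful, so Theorem \ref{Rohlin} does not apply directly. The fix is a tensor-product trick: let $\beta$ be \emph{any} flow on the AFD $\mathrm{II}_\infty$ factor (or on a suitable AFD factor) with faithful Connes--Takesaki module — such $\beta$ has the Rohlin property by Theorem \ref{Rohlin} (indeed by Theorem 6.18 of Masuda--Tomatsu \cite{MT}). Then consider $\alpha \otimes \beta$ on $M \otimes R_{0,1}$; since $M$ is AFD, $M \otimes R_{0,1} \cong M$, and the module of $\alpha\otimes\beta$ is $\mathrm{mod}(\alpha)\otimes\mathrm{mod}(\beta)$, which is \emph{not} what we want. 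So instead one argues that the Rohlin property is stable under tensoring with the identity in the presence of an already-Rohlin factor, using $(M\otimes N)_\omega = M_\omega \otimes \mathbf{C}$ type lemmas (Lemma 2.11 of Connes \cite{C}): more precisely, realize $\alpha$ itself as $\tilde{\tilde\alpha}$ on $M\otimes B(L^2\mathbf{R})$ as above, and observe that the unitaries $\{u_n\}$ producing the Rohlin property can be taken from $N$ exactly as in Lemma \ref{key}. In fact the cleanest route is: our construction already makes $\tilde\alpha$ on $N$ of precisely the trace-scaling-over-the-center form analyzed in Lemmas \ref{z2}--\ref{z} and the zero-kernel case, so \emph{the same proof} (choosing $\{u_n\}\subset U(N)$ via Feldman's Rohlin theorem and the fibrewise $\mathrm{II}_\infty$ Rohlin property) shows $\alpha$ has the Rohlin property.

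I expect the main obstacle to be the first step: ensuring the fibrewise trace-scaling flows on the $\mathrm{II}_\infty$ pieces can be chosen to depend measurably on $\gamma$ and to patch into a genuine flow on $N$ commuting with $\theta$ with the \emph{prescribed} center action $\sigma$ — in particular when $\sigma$ is non-faithful, so that on the $\sigma$-fixed part of $C$ one needs $\tilde\alpha$ to be trace-preserving there while still globally trace-scaling in the right way to be a canonical extension. Handling the non-faithful $\sigma$ cases will require exactly the kind of center-decomposition bookkeeping (into kernels $0, \mathbf{Z}, \mathbf{Z}^2, \mathbf{R}$) done in the proof of Theorem \ref{Rohlin}, run in reverse. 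Once that model $\tilde\alpha$ is in hand, both $\mathrm{mod}(\alpha) = \sigma$ and the Rohlin property follow by invoking the already-established machinery with only routine modifications.
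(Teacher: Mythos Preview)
Your construction of $\alpha$ via the core and Takesaki duality is far more elaborate than necessary, and in places not pinned down. The paper simply invokes the Sutherland--Takesaki short exact sequence
\[ 1 \to \overline{\mathrm{Int}}(M) \to \mathrm{Aut}(M) \xrightarrow{\ \mathrm{mod}\ } \mathrm{Aut}_\theta(C) \to 1 \]
together with its \emph{continuous} right inverse $s$ (Corollary 1.3 of \cite{ST3}); then $\alpha := s \circ \sigma$ is immediately a flow on $M$ with $\mathrm{mod}(\alpha) = \sigma$. Your fibrewise-over-$C$ construction would have to address measurable dependence and, more seriously, compatibility with $\theta$ (which moves the fibres), neither of which you have actually arranged---you are in effect reproving the Sutherland--Takesaki section by hand.

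The genuine gap, however, is in your Rohlin argument when $\sigma$ is not faithful. You correctly note that tensoring with a Rohlin flow $\beta$ on the AFD $\mathrm{II}_\infty$ factor changes the module, and then retreat to ``the same proof'' as Lemmas \ref{10}--\ref{15}. But those lemmas use faithfulness of $\mathrm{mod}(\alpha)$ at essential points---Lemma \ref{15} in particular deduces that the fibre actions are trace-scaling \emph{precisely} from faithfulness. If $\sigma$ is, say, trivial, your $\tilde\alpha$ is trace-preserving and fixes $C$, and nothing in that machinery produces Rohlin unitaries. The paper's fix is the tensor trick you almost found but with the wrong factor: take a Rohlin flow $\beta$ on the AFD factor of type $\mathrm{II}_1$. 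Since the flow of weights of a $\mathrm{II}_1$ factor is a point, $\mathrm{mod}(\beta)$ is trivial, so $\mathrm{mod}(\alpha \otimes \beta) = \sigma$; meanwhile $\alpha \otimes \beta$ inherits the Rohlin property from $\beta$, and $M \otimes R_0 \cong M$. That one-line observation replaces your entire third step.
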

\begin{proof}
The proof is modeled after Masuda \cite{M}. 
 As in Corollary 1.3 of Sutherland--Takesaki \cite{ST3}, there exists an exact sequence 
\[ 1 \to\overline{ \mathrm{Int}}(M) \to \mathrm{Aut}(M) \to \mathrm{Aut}_\theta (C) \to 1,\]
and there exists a right inverse $s: \mathrm{Aut}_\theta (C) \to \mathrm{Aut}(M)$. The maps $p: \mathrm{Aut}(M)\to \mathrm{Aut}_\theta (C)$ and $s: \mathrm{Aut}_\theta (C) \to \mathrm{Aut}(M)$  are continuous. 

Hence for a flow $\sigma $ on $C$ commuting with $\theta $, the homomorphism $\alpha :=s\circ \sigma :\mathbf{R}\to \mathrm{Aut}(M)$ gives an action with its Connes--Takesaki module $\sigma $. If $\sigma $ is faithful, by our main theorem, this has the Rohlin property. Assume that $\sigma $ is not faithful.  Then $\mathrm{mod}(\alpha \otimes \beta )=\mathrm{mod}(\alpha )$ for a Rohlin flow $\beta $ on the AFD factor of type $\mathrm{II}_1$. Hence this $\alpha \otimes \beta$ does the job.
\end{proof}

For actions on the AFD  factor of type $\mathrm{II}_1$, strong cocycle conjugacy is equivalent to cocycle conjugacy because every automorphism of the AFD factor of type $\mathrm{II}_1$ is approximated by its inner automorphisms. 
However, for flows on some AFD factor of type $\mathrm{III}_0$, cocycle conjugacy does not always imply strong cocycle conjugacy. 

\begin{exam}
Let $(X, \mu )$ be a probability measured space defined by
\[ (X, \mu ):=(\prod _{m\in \mathbf{Z}}(\prod _{n\in \mathbf{Z}}(\{ 0,1\},\{ \frac{1}{2}, \frac{1}{2} \} ))).\]
Let $S$, $T$ be two automorphisms of $X$ defined by the following way.
\[ S(m\mapsto (n\mapsto {x_n}^m \in \{0,1 \} )) =(m\mapsto (n\mapsto {x_{n+1}}^m)),\]
\[ T(m\mapsto (n\mapsto {x_n}^m )) =(m\mapsto (n\mapsto {x_n}^{m+1})).\]
Then both $S$ and $T$ are ergodic and satisfy $S \circ T= T\circ S$. Let $\beta _1$, $\beta_2$ be two flows on $L^\infty(\mathbf{T})$ satisfying the following conditions. 

\bigskip

\textup{(1)} Two flows are faithful.

\textup{(2)} The flow $\beta _1 $ is NOT conjugate to $\beta _2$.

\textup{(3)} Two flows preserve the Lebesgue measure.

\bigskip

As a probability measured space, we have
\[ \prod _{n\in \mathbf{Z}}\{ 0,1 \} ^n =(\prod _{n:\mathrm{odd}}\{ 0,1 \} ^n) \times (\prod _{n:\mathrm{even}}\{ 0,1 \} ^n )\cong \mathbf{T}^2. \] 
By this identification, we set 
\[ \beta :=( \bigotimes _{m\in \mathbf{Z}}(\beta _1\otimes \beta _2))\otimes \mathrm{id}: \mathbf{R}\curvearrowright L^\infty (X\times [0,1)). \]
Let $\theta $ be a flow on $L^\infty(X\times [0,1))$ defined by $T$ and the ceiling function $r=1$. Let $\rho $ be an automorphism of $L^\infty (X\times [0,1))$ defined by $S\times \mathrm{id}$. Then we have the following.

\bigskip

\textup{(1)} The flow $\theta $ commutes with both $\rho $ and $\beta $.

\textup{(2)} The action $\rho $ does not commute with $\beta $.

\textup{(3)} The flow $\theta $ is ergodic.

\bigskip

Now, we construct a pair of flows which are mutually cocycle conjugate but not strongly cocycle conjugate. Let $M$ be an AFD factor of type $\mathrm{III}_0$ with its flow of weights $\{\theta , X\times [0,1)\}$, $\alpha $ be a Rohlin action satisfying $\mathrm{mod}(\alpha )=\beta$ and let  $\sigma $ be an an automorphism of $M$ satisfying $\mathrm{mod} (\sigma )=\rho $. Then we have
\[ \mathrm{mod}(\alpha )=\beta \not =\rho \circ \beta \circ \rho ^{-1}=\mathrm{mod}(\sigma \circ \alpha \circ \sigma ^{-1}).\]
Hence $\alpha $ is cocycle conjugate to $\sigma \circ \alpha \circ \sigma ^{-1}$ but they are not strongly cocycle conjugate.
\end{exam}

\subsection{On Stability}
\label{extend}
In Thoerem 5.9, Izumi \cite{I} has shown that an action of a compact group on any factor of type $\mathrm{III}$ with faithful Connes--Takesaki module is minimal. As well as our main theorem, this theorem means that actions which are ``very outer'' at any non-trivial point are ``globally outer''. He has also shown that for these actions, cocycle conjugacy coincides with conjugacy. This phenomenon also occurs for trace-scaling flows on any factor of type $\mathrm{II}_\infty$. Hence it may be expected that this is true under our assumption. However, this is not the case. 

\begin{theo}
\label{non-stable}
Let $C$ be an abelian von Neumann algebra and $\theta $ be an ergodic flow on $C$. Let $M$ be an AFD factor with its flow of weights $(C, \theta )$. Let $\beta $ be a faithful flow on $C$ which commutes with $\theta$ and fixes a normal faithful semifinite weight $\mu $ of $C$. If the discrete spectrum of $\beta $ is NOT $\mathbf{R}$, then there are two flows $\alpha ^1$, $\alpha ^2$ which satisfies the following two conditions.

\textup{(1)} The Connes--Takesaki modules of $\alpha ^1 $ and $\alpha ^2 $ are $\beta$.

\textup{(2)} The flow $\alpha ^1 $ is NOT conjugate to $\alpha ^2$.
\end{theo}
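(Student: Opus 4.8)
The plan is to exploit the fact that the classification up to \emph{strong} cocycle conjugacy is strictly finer than classification up to cocycle conjugacy, and to realize this gap concretely for a flow $\beta$ whose discrete spectrum is not all of $\mathbf{R}$. The idea is to take $\alpha^1$ to be a Rohlin flow with $\mathrm{mod}(\alpha^1)=\beta$, which exists by the model-action Proposition above, and to take $\alpha^2 := \alpha^1$ perturbed by a continuous character, i.e. a flow of the form $\alpha^2_t := (\mathrm{Ad}\, v_t)\circ \alpha^1_t$ where $\{v_t\}$ is an $\alpha^1$-cocycle built from a one-parameter unitary group, or more simply $\alpha^2_t := \alpha^1_t$ followed by conjugation by a unitary implementing a suitable symmetry. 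The point is that $\alpha^1$ and $\alpha^2$ will have the same Connes--Takesaki module $\beta$ (perturbation by inner automorphisms does not change $\mathrm{mod}$, since $\overline{\mathrm{Int}}(M)$ is exactly the kernel of $\mathrm{mod}$), so condition (1) holds automatically; the work is entirely in arranging that they are not conjugate.

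First I would pin down the obstruction to conjugacy. Two flows $\alpha^1,\alpha^2$ with $\mathrm{mod}(\alpha^i)=\beta$ are conjugate via some $\Phi\in\mathrm{Aut}(M)$ precisely when $\Phi\circ\alpha^1_t\circ\Phi^{-1}=\alpha^2_t$ for all $t$; projecting to the flow of weights, $\mathrm{mod}(\Phi)$ must lie in $\mathrm{Aut}_\beta(C)\cap\mathrm{Aut}_\theta(C)$ and intertwine $\beta$ with itself, so $\mathrm{mod}(\Phi)$ commutes with $\beta$. I would use the hypothesis that the discrete spectrum of $\beta$ is not $\mathbf{R}$: pick $p\in\mathbf{R}$ which is \emph{not} an eigenvalue of $\beta$, equivalently there is no nonzero $f\in C$ with $\beta_t(f)=e^{ipt}f$. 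Then I build $\alpha^2$ so that it ``would require'' an eigenvector at $p$ to be conjugated back to $\alpha^1$. Concretely, one can take a one-parameter unitary group $\{w_t=e^{it h}\}$ in a matrix amplification, set $\alpha^2 = \alpha^1\otimes \gamma$ on $M\otimes B(H)\cong M$ where $\gamma_t=\mathrm{Ad}\,w_t$ and $h$ has spectrum containing $p$; since $\gamma$ is inner-perturbation-trivial on the II$_\infty$ or III part this keeps $\mathrm{mod}=\beta$, but a conjugacy would force the spectral data at $p$ to be absorbed by $\beta$ itself, contradicting the choice of $p$.

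The key technical steps, in order, are: (i) invoke the model-action Proposition to get a Rohlin flow $\alpha^1$ with $\mathrm{mod}(\alpha^1)=\beta$; (ii) construct $\alpha^2$ as an explicit inner-cocycle perturbation (or $B(H)$-stabilized perturbation) of $\alpha^1$ carrying ``extra spectrum'' concentrated at a point $p\notin\mathrm{Sp}_d(\beta)$, and check $\mathrm{mod}(\alpha^2)=\beta$ using the exact sequence $1\to\overline{\mathrm{Int}}(M)\to\mathrm{Aut}(M)\to\mathrm{Aut}_\theta(C)\to1$; (iii) suppose $\Phi$ were a conjugacy $\Phi\alpha^1_t\Phi^{-1}=\alpha^2_t$, pass to the continuous core and to $C$, and derive that $\mathrm{mod}(\Phi)$ both commutes with $\beta$ and transports the $p$-spectral subspace of $\alpha^2$ into that of $\alpha^1$; (iv) unwind this to produce a nonzero eigenvector of $\beta$ with eigenvalue $p$ in $C$ (or some invariant forcing $p\in\mathrm{Sp}_d(\beta)$), contradicting the hypothesis. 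The main obstacle I expect is step (iii)–(iv): isolating a genuinely \emph{conjugacy}-invariant piece of spectral data that survives modding out by inner automorphisms but is sensitive to the perturbation — this is exactly the subtlety that distinguishes cocycle conjugacy from conjugacy, and getting the bookkeeping right (which spectral subspace, computed in which algebra, and why it descends to $C$) is where the real care is needed. A cleaner route for (iv) may be to compare the flows restricted to their fixed-point data or to use the Connes spectrum $\Gamma$ / the unitary spectrum of the flow in $M^\omega$, which is a conjugacy invariant, and to check the perturbation changes it while $\beta$'s being a model forces any conjugacy to preserve it.
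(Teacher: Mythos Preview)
Your instinct to distinguish $\alpha^1$ and $\alpha^2$ by their discrete spectrum is exactly what the paper does, but the construction you propose has a genuine gap that breaks the argument. If $\alpha^1$ comes from the abstract model-action proposition (via the section $s:\mathrm{Aut}_\theta(C)\to\mathrm{Aut}(M)$), you have \emph{no control} over $\mathrm{Sp}_d(\alpha^1)$: it could, for all you know, already be all of $\mathbf{R}$, or at least contain your chosen $p$. Tensoring with an inner flow $\gamma_t=\mathrm{Ad}\,e^{ith}$ on $B(H)$ can only enlarge the discrete spectrum by adding $\mathrm{Sp}_d(\gamma)$, so without an \emph{upper} bound on $\mathrm{Sp}_d(\alpha^1)$ you cannot conclude $\mathrm{Sp}_d(\alpha^1)\neq\mathrm{Sp}_d(\alpha^2)$. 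Your steps (iii)--(iv) are also off target: conjugacy preserves $\mathrm{Sp}_d(\alpha^j)$ on $M$ directly, so there is no need to ``descend to $C$'' or produce an eigenvector of $\beta$; the issue is purely that you have not computed the two spectra.

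The paper resolves this by building \emph{both} flows concretely rather than taking one as a perturbation of the other. It realizes the core $N$ as $C\otimes R_{0,1}\otimes R_0$, sets $\overline{\alpha^j}=\beta\otimes\mathrm{id}\otimes\gamma^j$ with $\gamma^j$ an explicit ergodic (outer, not inner) flow on the AFD $\mathrm{II}_1$ factor of Kawahigashi type whose discrete spectrum is a prescribed subgroup $\Gamma_j$, and then lifts to $M=N\rtimes_\theta\mathbf{R}$. The explicit tensor-product form is what makes the crucial upper bound $\mathrm{Sp}_d(\alpha^j)\subset\langle\Gamma_j,\Lambda\rangle$ provable (via a Fourier expansion in the $R_0$ generators), and one then chooses $\Gamma_1,\Gamma_2$ so that $\langle\Gamma_1,\Lambda\rangle$ and $\langle\Gamma_2,\Lambda\rangle$ are incomparable. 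The missing idea in your proposal is precisely this: you need a construction explicit enough to bound $\mathrm{Sp}_d(\alpha^j)$ from above, and an abstract lift of $\beta$ followed by an inner perturbation does not give that.
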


Before starting the proof, it is important to note that from the above theorem, cocycle conjugacy does not coincide with conjugacy at all. For example, if $\beta $ preserves a normal faithful state of $C$, then its discrete spectrum is countable. Hence it seems that our situation is different from that of actions of compact groups.

\bigskip

We return to the proof of the theorem. In the following, we actually construct these flows. In the following, we denote the AFD factor of type $\mathrm{II}_1$ by $R_0$ and denote the AFD factor of type $\mathrm{II}_\infty $ by $R_{0,1}$.

Let $\Lambda$ be the discrete spectrum of $\beta$ and $\mu $ be a $\beta$-invariant measure. In the rest of this subsection, we assume that $\Lambda $ is not $\mathbf{R}$. Then by the ergodicity of $\theta $ (Note that $\beta $ may not be ergodic), $\Lambda $ is a proper subgroup of $\mathbf{R}$. Hence there are at least two real numbers which do not belong to $\Lambda $. Let $\Gamma _j$ $(j=1,2)$ be two subgroups of $\mathbf{R}$ generated by two elements $\lambda_j, \mu _j$, respectively, satisfying the following conditions.

\[ \Gamma _1 \cup \Lambda \not \subset \langle \Gamma _2,  \Lambda \rangle ,\]

\[ \Gamma _2 \cup \Lambda \not \subset \langle \Gamma _1,  \Lambda \rangle .\]

 Here, $\langle \Gamma _i, \Lambda \rangle $ is the subgroup of $\mathbf{R}$ generated by $\Gamma_i$ and $\Lambda $. Let $\gamma ^j$ ($j=1,2$) be two  ergodic flows on $R_0$ with their discrete spectrum $\Gamma _j$, respectively. Namely, we think of $R_0$ as a weak closure of an irrational rotation algebra $A_s:=\mathrm{C}^*(u,v\mid u,v : \mathrm{unitaries} \   \mathrm{satisfying} \ vu=e^{2\pi is}uv)$ and define flows $\gamma ^j$, $j=1,2$, by the following way. This type of actions is considered by Kawahigashi \cite{Kwh1}.

\[ \gamma ^j_t(u)=e^{i\lambda _j t}u,\  \gamma ^j_t(v)=e^{i\mu _j t}v\]
for $t\in \mathbf{R}$.

Set $\tau :=\mu \otimes \tau _{R_{0,1}}\otimes \tau _{R_0}$. The flow $\theta $ is extended to a $\tau$-scaling flow on $N:=C\otimes R_{0,1}\otimes R_0$ as in equations (1.2) of Sutherland--Takesaki \cite{ST3}. Set $\overline{\alpha ^j}:=\beta \otimes \mathrm{id}_{R_{0,1}}\otimes \gamma ^j$. Then $\overline{\alpha ^j}$ commutes with $\theta$ (See the equation after equation (1.8) of Sutherland--Takesaki \cite{ST3}). Hence the flow $\overline{\alpha ^j}$ is extended to $M:=N\rtimes _\theta \mathbf{R}$ in the following way.

\[ \alpha ^j_t(\lambda ^\theta _s)=\lambda ^\theta _s\]
for $s,t\in \mathbf{R}$. Note that the flow $\theta :\mathbf{R}\curvearrowright N$ is not so ``easy''. However, the flow $\alpha ^j$ is very concrete. Here, we think of $M$ as a von Neumann algebra generated by $N$ and a one parameter unitary group $\{ \lambda _s \} _{s\in \mathbf{R}}$.

In order to show Theorem \ref{non-stable}, for these $\alpha ^j$'s, it is enough to show the following lemma.

\begin{lemm}
\label{two statements}
In the above context, we have the following two statements.

\textup{(1)} The Connes--Takesaki module of $\alpha ^j$ is $\beta $ for each $j=1,2$.

\textup{(2)} For the discrete spectrum of $\alpha ^j$, we have the following inclusion.
\[\Gamma ^j \cup \Lambda \subset \mathrm{Sp}_d(\alpha ^j)\subset \langle \Gamma ^j , \Lambda \rangle .\]
\end{lemm}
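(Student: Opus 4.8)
The plan is to treat the two statements by fairly direct computations with the explicit description of $\alpha^j$, using the fact that $M = N \rtimes_\theta \mathbf{R}$ with $N = C \otimes R_{0,1} \otimes R_0$ and $\alpha^j_t = (\beta_t \otimes \mathrm{id}_{R_{0,1}} \otimes \gamma^j_t)\!\sim$ on $N$ fixing the implementing unitaries $\lambda^\theta_s$. For statement (1), I would compute $\mathrm{mod}(\alpha^j)$ directly from its definition via the continuous core. Since $\tau = \mu \otimes \tau_{R_{0,1}} \otimes \tau_{R_0}$ is a $\theta$-scaling trace on $N$ which is preserved by $\overline{\alpha^j} = \beta \otimes \mathrm{id}_{R_{0,1}} \otimes \gamma^j$ (the first factor because $\beta$ fixes $\mu$, the third because $\gamma^j$ is a flow on the $\mathrm{II}_1$ factor $R_0$ hence trace-preserving), and since $\alpha^j$ fixes $\lambda^\theta_s$, the canonical extension of $\alpha^j$ to the core is just $\alpha^j$ itself with the obvious action; restricting to the center $C$ of $N$ gives $\beta$. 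More precisely, I would use that $M$ here is presented in the form of a crossed product so that $N$ plays the role of the continuous core and $C = Z(N)$ the flow-of-weights algebra; then $\widetilde{\alpha^j}\mid_C$ is computed from how $\alpha^j$ acts on $C \subset N$, which by construction is exactly $\beta$. This is essentially the content of equations (1.2)--(1.8) of Sutherland--Takesaki \cite{ST3}, to which the construction is adapted, so statement (1) should be a short verification.

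For statement (2), recall that the discrete (point) spectrum $\mathrm{Sp}_d(\alpha^j)$ is the set of $p \in \mathbf{R}$ for which there exists a nonzero $x \in M$ with $\alpha^j_t(x) = e^{ipt}x$ for all $t$. For the inclusion $\Gamma^j \cup \Lambda \subset \mathrm{Sp}_d(\alpha^j)$, I would exhibit eigenoperators explicitly: the generating unitaries $u, v$ of the copy of $R_0$ inside $N \subset M$ satisfy $\alpha^j_t(u) = \gamma^j_t(u) = e^{i\lambda_j t}u$ and $\alpha^j_t(v) = e^{i\mu_j t}v$, giving $\lambda_j, \mu_j$ (hence all of $\Gamma^j$, by taking products $u^a v^b$) as eigenvalues; and for $\lambda \in \Lambda = \mathrm{Sp}_d(\beta)$, picking a $\beta$-eigenfunction $f \in C$ with $\beta_t(f) = e^{i\lambda t}f$ and viewing $f \in C \subset M$ gives $\alpha^j_t(f) = \beta_t(f) = e^{i\lambda t}f$. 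For the reverse inclusion $\mathrm{Sp}_d(\alpha^j) \subset \langle \Gamma^j, \Lambda \rangle$, the idea is to show that the fixed-point algebra $M^{\alpha^j \cdot e^{-ip\,\cdot}}$ is trivial (has no nonzero elements) whenever $p \notin \langle \Gamma^j, \Lambda \rangle$. Here I would pass to the core $N = C \otimes R_{0,1} \otimes R_0$ and use a Fourier/spectral decomposition over the three tensor factors and over the crossed-product variable: an eigenoperator for $\alpha^j$ with eigenvalue $p$ must decompose into pieces whose $C$-part carries a $\beta$-frequency in $\Lambda$, whose $R_0$-part carries a $\gamma^j$-frequency in $\Gamma^j$, and whose $R_{0,1}$-part and $\lambda^\theta$-part carry frequency $0$; hence $p$ lies in $\langle \Gamma^j, \Lambda \rangle$.

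The main obstacle will be making the reverse inclusion in (2) rigorous: one must argue that a nonzero $\alpha^j$-eigenoperator in $M$ (not just in $N$, and not just in the algebraic tensor product) can be refined spectrally across all four ``directions'' simultaneously. The clean way is to use spectral subspaces: $\mathrm{Sp}_d(\alpha^j)$ is contained in the closed subgroup generated by the eigenvalues, and for a tensor/crossed-product flow of the form $\beta \otimes \mathrm{id} \otimes \gamma^j$ extended trivially over $\mathbf{R}$, the Arveson spectrum (and a fortiori the point spectrum) is contained in the sum of the spectra of the constituent flows, i.e. $\mathrm{Sp}(\beta) + \{0\} + \mathrm{Sp}(\gamma^j) + \{0\}$; intersecting with the point spectrum and using that $\mathrm{Sp}_d(\gamma^j) \subset \Gamma^j$ (as $\gamma^j$ is an irrational-rotation-type flow with exactly that discrete spectrum, cf. Kawahigashi \cite{Kwh1}) and $\mathrm{Sp}_d(\beta) = \Lambda$ yields the result. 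I would cite the standard spectral-subspace calculus for tensor products and crossed products to handle this, keeping the explicit computations confined to exhibiting the eigenoperators in the forward direction. Statement (1), as noted, is a routine unwinding of the definition of the Connes--Takesaki module applied to this concrete model.
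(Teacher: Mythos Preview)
Your outline for statement~(1) is essentially the paper's approach, though the paper makes it precise by first verifying that the dual weight $\hat{\tau}$ is $\alpha^j$-invariant (so that $\sigma^{\hat{\tau}}$ is the relevant modular flow and the canonical extension fixes the implementing unitaries $\lambda^\sigma_s$), and then invoking Takesaki duality to identify $\tilde{M}=M\rtimes_{\sigma^{\hat{\tau}}}\mathbf{R}$ with $N\otimes B(H)$ and $\tilde{\alpha^j}$ with $\overline{\alpha^j}\otimes\mathrm{id}$; restricting to $Z(N)=C$ then gives $\beta$. Your phrase ``$N$ plays the role of the continuous core'' encodes exactly this duality step, so on (1) you and the paper agree.

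For statement~(2), the forward inclusion is handled the same way. For the reverse inclusion, however, your Arveson-spectrum argument has a real gap: the containment $\mathrm{Sp}(\alpha^j)\subset \mathrm{Sp}(\beta)+\mathrm{Sp}(\gamma^j)$ is useless here, because the full Arveson spectrum $\mathrm{Sp}(\beta)$ can be all of $\mathbf{R}$ even when $\mathrm{Sp}_d(\beta)=\Lambda$ is small, and there is no general principle that lets you ``intersect with the point spectrum'' to turn $\mathrm{Sp}(\beta)+\mathrm{Sp}(\gamma^j)$ into $\mathrm{Sp}_d(\beta)+\mathrm{Sp}_d(\gamma^j)$. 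What rescues the argument, and what the paper actually does, is to exploit that $\gamma^j$ is almost periodic with an explicit eigenbasis. First, the paper reduces $\mathrm{Sp}_d(\alpha^j)$ to $\mathrm{Sp}_d(\overline{\alpha^j})$ by passing \emph{up} to $\tilde{\alpha^j}\cong\overline{\alpha^j}\otimes\mathrm{id}$ on $N\otimes B(H)$ and reading off matrix entries (this replaces your direct Fourier analysis across the crossed-product variable, which would be harder to make rigorous). A second application of the same matrix-entry trick, embedding $R_{0,1}\subset B(H)$, reduces further to $\mathrm{Sp}_d(\beta\otimes\gamma^j)$ on $C\otimes R_0$. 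Then any eigenvector $x\in C\otimes R_0$ is expanded as $x=\sum_{n,m}c_{n,m}\,u^n v^m$ with $c_{n,m}\in C$ (legitimate because $\{u^n v^m\}$ is orthonormal in $L^2(R_0,\tau)$); comparing coefficients in $(\beta_t\otimes\gamma^j_t)(x)=e^{ipt}x$ forces $\beta_t(c_{n,m})=e^{i(p-n\lambda_j-m\mu_j)t}c_{n,m}$, so a nonzero $c_{n,m}$ witnesses $p-n\lambda_j-m\mu_j\in\Lambda$, i.e.\ $p\in\langle\Gamma^j,\Lambda\rangle$. Your earlier informal description (``pieces whose $C$-part carries a $\beta$-frequency in $\Lambda$ and whose $R_0$-part carries a $\gamma^j$-frequency in $\Gamma^j$'') is exactly this computation; the point is that it requires the explicit eigenbasis of $R_0$, not abstract spectral calculus.
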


From statement (1) of the lemma and our main theorem, it is shown that $ \alpha ^1$ and $\alpha^2$ are mutually cocycle conjugate. On the other hand, from statement (2) of the lemma, it is shown that the discrete spectrum of $\alpha ^1$ and that of $\alpha ^2 $ are different. Hence they are not conjugate.

In order to show this lemma, we first show the following lemma.
 
\begin{lemm}
The weight $\hat{\tau}$ is invariant by $\alpha ^j$.
\end{lemm}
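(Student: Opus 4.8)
The plan is to reduce the invariance of the dual weight $\hat{\tau}$ on $M=N\rtimes_{\theta}\mathbf{R}$ to the invariance of $\tau$ on $N$ under $\overline{\alpha^{j}}$, and then to quote the functoriality of the dual-weight construction with respect to automorphisms commuting with the action one crosses by.

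First I would check that $\overline{\alpha^{j}}=\beta\otimes\mathrm{id}_{R_{0,1}}\otimes\gamma^{j}$ preserves $\tau=\mu\otimes\tau_{R_{0,1}}\otimes\tau_{R_0}$. By hypothesis $\beta$ fixes $\mu$; the identity obviously fixes $\tau_{R_{0,1}}$; and $\gamma^{j}$ fixes $\tau_{R_0}$ automatically, since $\gamma^{j}$ is a (normal) flow on the AFD $\mathrm{II}_1$ factor $R_0$ — the flow defined on the irrational rotation $\mathrm{C}^{*}$-algebra extends to a normal flow on the weak closure — and the normalized trace of a $\mathrm{II}_1$ factor is the unique normal tracial state, hence preserved by every normal automorphism. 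Therefore $\tau\circ\overline{\alpha^{j}_t}=\tau$ for all $t\in\mathbf{R}$.

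Next I would invoke the behaviour of dual weights under the extension of automorphisms to the crossed product. As noted in the construction, $\overline{\alpha^{j}_t}$ commutes with $\theta$, so it extends to the automorphism $\alpha^{j}_t$ of $M=N\rtimes_{\theta}\mathbf{R}$ determined by $\alpha^{j}_t|_{N}=\overline{\alpha^{j}_t}$ and $\alpha^{j}_t(\lambda^{\theta}_s)=\lambda^{\theta}_s$. The canonical operator-valued weight $T\colon M_{+}\to\widehat{N}_{+}$ from $M$ onto $N$ attached to the dual action is built intrinsically from the $\mathbf{R}$-grading of $M$ over $N$, which $\alpha^{j}_t$ preserves since it fixes $\{\lambda^{\theta}_s\}$ pointwise; consequently $T\circ\alpha^{j}_t=\overline{\alpha^{j}_t}\circ T$, whence
\[
\hat{\tau}\circ\alpha^{j}_t=(\tau\circ T)\circ\alpha^{j}_t=(\tau\circ\overline{\alpha^{j}_t})\circ T=\tau\circ T=\hat{\tau}
\]
by the previous step. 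Equivalently, this is the standard functoriality $\hat{\psi}\circ\alpha^{j}_t=\widehat{\psi\circ\overline{\alpha^{j}_t}}$ of the dual weight (Takesaki's crossed-product theory, Haagerup's dual weights), applied to $\psi=\tau$.

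I do not anticipate a real obstacle here; the only points needing care are bookkeeping: that $\gamma^{j}$ is trace-preserving (immediate from uniqueness of the trace on $R_0$), and that the extension of $\overline{\alpha^{j}}$ used to define $\alpha^{j}$ is the one appearing in the functoriality statement — both fix $\{\lambda^{\theta}_s\}$ pointwise, so they coincide. It is worth recording that this invariance is exactly what licenses treating $\hat{\tau}$ as an $\alpha^{j}$-invariant n.s.f. weight when computing $\mathrm{mod}(\alpha^{j})$ and the discrete spectrum in Lemma \ref{two statements}.
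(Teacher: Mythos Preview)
Your proposal is correct and follows essentially the same route as the paper: both arguments reduce to the fact that $\overline{\alpha^{j}}$ preserves $\tau$ (since $\beta$ fixes $\mu$, the identity fixes $\tau_{R_{0,1}}$, and $\gamma^{j}$ fixes the unique trace on $R_0$), and then use the functoriality of the dual weight under automorphisms commuting with $\theta$ and fixing $\{\lambda^{\theta}_s\}$. The only cosmetic difference is that the paper carries this last step out by hand via Takesaki's explicit description of $\hat{\tau}$ on $\tilde{\pi}(b_\tau)$ (checking commutation with $\sigma^{\hat{\tau}}$ and invariance on a dense set), whereas you invoke the operator-valued weight identity $\hat{\tau}=\tau\circ T$ together with $T\circ\alpha^{j}_t=\overline{\alpha^{j}_t}\circ T$; these are interchangeable formulations of the same functoriality.
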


\begin{proof} Set
\[ n_\tau:=\{a\in N \mid \tau (a^*a)<\infty\},\]
\[ K(\mathbf{R},N):=\{ x:\mathbf{R}\to N\mid \mathrm{strongly}^* \ \mathrm{continuous} \ \mathrm{map} \ \mathrm{with} \ \mathrm{compact} \ \mathrm{support}\} ,\]
\[ b_\tau :=\mathrm{span}\{ xa \mid x\in K(\mathbf{R},N), a\in n_\tau \}.\]
For $x\in b_\tau$, set 
\[ \tilde{\pi}(x):=\int_{\mathbf{R}}x_t\lambda _t^\theta \ dt. \] 

In order to show this lemma, it is enough to show the following two statements (For example, see Theorem X.1.17. of Takesaki \cite{T1}).

\bigskip

(1) For $s,t\in \mathbf{R}$, we have $\sigma ^{\hat{\tau }}_t=\alpha ^j _{-s} \circ \sigma ^{\hat{\tau }}_t \circ \alpha ^j_s$.

(2) For $x\in b_\tau$, $s\in \mathbf{R}$, we have 
\[ \hat{\tau}\circ \alpha ^j_s (\tilde{\pi}(x)^*\tilde{\pi}(x))=\hat{\tau}(\tilde{\pi} (x)^*\tilde{\pi}(x)).\]

\bigskip

Statement (1) is trivial because $\alpha ^j$ commutes with $\sigma ^{\hat{\tau}}$. We show statement (2). Notice that 
\begin{align*}
\alpha _s^j(\tilde{\pi}(x)) &=\alpha _s^j (\int _{\mathbf{R}}x_t\lambda _t \ dt) \\
            &=\int _{\mathbf{R}}\overline{\alpha ^j}_s(x_t)\lambda _t \ dt \\
            &=\tilde{\pi}(\overline{\alpha ^j}_s(x)).
\end{align*}
 
Since $\tau $ is invariant by $\overline{\alpha ^j}$, we have
\begin{align*}
\hat{\tau}\circ \alpha ^j _s(\tilde{\pi}(x)^*\tilde{\pi}(x)) &=\hat{\tau}(\tilde{\pi}(\overline{\alpha ^j}_s(x))^*\tilde{\pi}(\overline{\alpha ^j}_s(x))) \\
                                                             &=\tau (\int _{\mathbf{R}} \overline{\alpha ^j}_s (x_t^*x_t) \ dt) \\
                                                             &=\tau (\int _{\mathbf{R}}x_t^*x_t \ dt) \\
                                                             &=\hat{\tau}(\tilde{\pi}(x)^*\tilde{\pi}(x)).
\end{align*}
Thus statement (2) holds.

\end{proof}

By this lemma, the canonical extention $\tilde{\alpha ^j}$ of $\alpha ^j$ is defined by $\tilde{\alpha ^j}_t(\lambda ^\sigma _s)=\lambda ^\sigma _s$ if we think of $\tilde {M}:=M\rtimes _{\sigma ^{\hat{\tau}}}\mathbf{R}$ as a von Neumann algebra generated by $M$ and a one parameter unitary group $\{\lambda ^\sigma _t\}$.

Hence by Lemma 13.3 of Haagerup--St\o rmer \cite{HS}, if we identify $N\rtimes _\theta \mathbf{R} \rtimes _\sigma \mathbf{R}$ with $N \otimes B(H)$ by Takesaki's duality theorem, we have
\[ \tilde{\alpha ^j} \cong \overline{\alpha ^j}\otimes \mathrm{id}.\]
Thus statement (1) of Lemma \ref{two statements} holds.

In the following, we show statement (2) of Lemma \ref{two statements}. We need to show the following lemma.

\begin{lemm}
\label{spect}
We have $\mathrm{Sp}_d\overline{(\alpha ^j})=\mathrm{Sp}_d(\alpha ^j)=\mathrm{Sp}_d(\tilde{\alpha ^j})$.
\end{lemm}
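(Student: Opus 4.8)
The statement asserts the equality of three discrete spectra: that of $\overline{\alpha^j} = \beta \otimes \mathrm{id}_{R_{0,1}} \otimes \gamma^j$ on $N$, that of $\alpha^j$ on $M = N \rtimes_\theta \mathbf{R}$, and that of the canonical extension $\tilde{\alpha^j}$ on $\tilde{M}$. The key structural input we have already established is $\tilde{\alpha^j} \cong \overline{\alpha^j} \otimes \mathrm{id}_{B(H)}$ under Takesaki duality, together with the explicit formula $\alpha^j_t(\lambda^\theta_s) = \lambda^\theta_s$ showing that $\alpha^j$ acts trivially on the implementing unitaries of the crossed product $M = N \rtimes_\theta \mathbf{R}$, and similarly for $\tilde{\alpha^j}$ on the second-level unitaries. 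The plan is to show that each of these ``extension'' steps — passing from $N$ to $M$, and from $M$ to $\tilde{M}$ — leaves the discrete spectrum unchanged, because in each case we are taking a crossed product by a group on which the flow acts by fixing the implementing unitaries.

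\textbf{Key steps.} First I would recall that the discrete spectrum $\mathrm{Sp}_d(\gamma)$ of a flow $\gamma$ on a von Neumann algebra $P$ is the set of $p \in \mathbf{R}$ for which there is a nonzero eigenoperator $v \in P$ with $\gamma_t(v) = e^{ipt}v$ for all $t$; equivalently, it is the group of $p$ such that the spectral subspace is nonzero. The inclusion $\mathrm{Sp}_d(\overline{\alpha^j}) \subseteq \mathrm{Sp}_d(\alpha^j)$ is immediate since $N \subseteq M$ and $\alpha^j|_N = \overline{\alpha^j}$: any eigenoperator in $N$ is still an eigenoperator in $M$. The same argument gives $\mathrm{Sp}_d(\alpha^j) \subseteq \mathrm{Sp}_d(\tilde{\alpha^j})$. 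The content is therefore the reverse inclusions. For $\mathrm{Sp}_d(\alpha^j) \subseteq \mathrm{Sp}_d(\overline{\alpha^j})$: suppose $v \in M$ satisfies $\alpha^j_t(v) = e^{ipt}v$. Write $v = \int_{\mathbf{R}} v_s \lambda^\theta_s \, ds$ (or use the Fourier-type expansion / conditional expectation onto $N$ coming from the dual action $\hat{\theta}$ of $\hat{\mathbf{R}}$ on $M = N \rtimes_\theta \mathbf{R}$). Since $\alpha^j$ commutes with $\hat{\theta}$ and fixes each $\lambda^\theta_s$, each spectral component $v_s \in N$ of $v$ with respect to $\hat{\theta}$ again satisfies $\overline{\alpha^j}_t(v_s) = e^{ipt}v_s$; as $v \neq 0$ some $v_s \neq 0$, so $p \in \mathrm{Sp}_d(\overline{\alpha^j})$. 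The step $\mathrm{Sp}_d(\tilde{\alpha^j}) \subseteq \mathrm{Sp}_d(\alpha^j)$ is identical, now using the dual action on $\tilde{M} = M \rtimes_{\sigma^{\hat\tau}} \mathbf{R}$, the fact that $\tilde{\alpha^j}$ fixes the unitaries $\lambda^\sigma_s$, and that $\tilde{\alpha^j}$ commutes with that dual action because it is trace-preserving / by property (1) of canonical extensions in the Preliminaries.

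\textbf{Main obstacle.} The delicate point is making the ``Fourier decomposition'' argument rigorous: an arbitrary $v$ in the crossed product need not be a nice finite sum $\sum v_s \lambda_s$, so one must phrase the argument in terms of the dual action $\hat\theta$ of the dual group and its spectral subspaces, checking that the projection onto the spectral subspace at a character $\chi$ intertwines $\alpha^j$ with itself (this is exactly because $\alpha^j$ and $\hat\theta$ commute — $\alpha^j$ fixes $\lambda^\theta_s$, hence commutes with the gauge-type action — and lands in $N$ when $\chi$ is trivial, by Takesaki duality / the standard description of the fixed-point algebra of $\hat\theta$). One then picks $\chi$ so that the $\chi$-component of $v$ is nonzero; that component is an eigenoperator for $\alpha^j$ with the same eigenvalue and lies in a copy of $N$ (after translating by $\lambda^\theta$), giving the eigenvalue in $\mathrm{Sp}_d(\overline{\alpha^j})$. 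Alternatively, and perhaps more cleanly, one invokes $\tilde{\alpha^j} \cong \overline{\alpha^j} \otimes \mathrm{id}_{B(H)}$ directly to get $\mathrm{Sp}_d(\tilde{\alpha^j}) = \mathrm{Sp}_d(\overline{\alpha^j} \otimes \mathrm{id}) = \mathrm{Sp}_d(\overline{\alpha^j})$ (the discrete spectrum is unchanged by tensoring with the trivial action on $B(H)$, since $\mathrm{Sp}_d$ of $\mathrm{id}$ is $\{0\}$ and spectra add), and this pins down the top and bottom of the chain of inclusions $\mathrm{Sp}_d(\overline{\alpha^j}) \subseteq \mathrm{Sp}_d(\alpha^j) \subseteq \mathrm{Sp}_d(\tilde{\alpha^j}) = \mathrm{Sp}_d(\overline{\alpha^j})$, forcing all three to coincide. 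This second route sidesteps the Fourier-decomposition bookkeeping entirely, so I would present it as the main line and mention the direct crossed-product argument only if needed.
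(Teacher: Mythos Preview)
Your proposal is correct and your ``second route'' is exactly the paper's argument: the paper establishes the chain $\mathrm{Sp}_d(\overline{\alpha^j}) \subset \mathrm{Sp}_d(\alpha^j) \subset \mathrm{Sp}_d(\tilde{\alpha^j})$ from the inclusions, then uses the Takesaki-duality identification $\tilde{\alpha^j} \cong \overline{\alpha^j} \otimes \mathrm{id}_{B(H)}$ and a matrix-entry argument (pick a nonzero $x_{kl}$ of an eigenoperator $x = (x_{kl}) \in N \otimes B(l^2(\mathbf{N}))$) to close the loop with $\mathrm{Sp}_d(\tilde{\alpha^j}) \subset \mathrm{Sp}_d(\overline{\alpha^j})$. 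Your Fourier/dual-action ``first approach'' is a valid alternative but unnecessary, as you yourself note.
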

\begin{proof}
The action $\alpha ^j$ is an extension of the action $\overline{\alpha ^j}$, and the action $\tilde{\alpha ^j}$ is an extension of the action $\alpha ^j$. Hence we have $\mathrm{Sp}_d(\overline{\alpha ^j})\subset \mathrm{Sp}_d(\alpha ^j) \subset \mathrm{Sp}_d(\tilde {\alpha ^j})$. We show the implication $\mathrm{Sp}_d(\tilde{\alpha ^j})\subset \mathrm{Sp}_d(\overline{\alpha ^j})$. Note that if we identify $N\rtimes _\theta \mathbf{R} \rtimes _\sigma \mathbf{R}$ with $N\otimes B(H)$ by Takesaki's duality theorem, we have $\tilde {\alpha ^j}=\overline{\alpha ^j} \otimes \mathrm{id}$. Choose $p\in \mathrm{Sp}_d(\tilde{\alpha ^j})$. Then there exists a non-zero element $x\in N\otimes B(H)$ with $\tilde{\alpha ^j}_t(x)=e^{ipt}x$ for $t\in \mathbf{R}$. If we write $x=(x_{kl})_{kl}\in N\otimes B(l^2(\mathbf{N}))$, then there exists $(k,l)$ with $x_{kl} \not =0$. Since we have $\overline{\alpha ^j}_t(x_{kl})=e^{ipt}x_{kl}$, we have $p\in \mathrm{Sp}_d(\overline{\alpha ^j})$. 
\end{proof}

Now, we return to the proof of statement (2) of Lemma \ref{two statements}, which completes the proof of Theorem \ref{non-stable}.

\bigskip

\textit{Proof of Lemma \ref{two statements}.}
The inclusion $\Gamma _j\cup \Lambda \subset \mathrm{Sp}_d(\overline{\alpha ^j})$ is trivial. We show the inclusion $\mathrm{Sp}_d(\overline{\alpha ^j})\subset \langle \Gamma _j, \Lambda \rangle$. If we think of $N=C\otimes R_{0,1}\otimes R_0$ as a subalgebra of $C\otimes B(H)\otimes R_0$, then $\overline{\alpha ^j} $ extends to $\beta \otimes \mathrm{id}_{B(H)}\otimes \gamma ^j$. Hence by the same argument as in Lemma \ref{spect}, we have $\mathrm{Sp}_d(\overline{\alpha ^j})=\mathrm{Sp}_d(\beta \otimes \gamma^j)$.  Choose $p\in \mathrm{Sp}_d(\beta \otimes \gamma ^j)$. Let $x\in C\otimes R_0$ be a non-zero eigenvector for $p\in \mathrm{Sp}_d(\overline{\alpha^j})$. Then $x $ is expanded as
\[ x =\sum _{n,m}c_{n,m} u^nv^m\]
with $c_{n,m} \in C$ ($n,m\in \mathbf{Z}$). Hence we have 
\begin{align*}
\sum _{n,m} e^{ipt}c_{n,m}u^nv^m &= e^{ipt}x \\
                                 &=\beta _t \otimes \gamma ^j _t (x) \\
                                 &=\sum _{n,m} \beta _t (c_{n,m}) e^{i(n\lambda _j + m\mu _j)t}u^nv^m. 
\end{align*}
Since $x \not =0$, there exists $(n,m)$ with $c_{n,m}\not =0$. Hence by the uniqueness of the Fourier expansion, we have 
\[ \beta _t(c_{n,m})=e^{i(p-n\lambda_j -m\mu _j )t} c_{n,m}.\]
Thus $p\in \langle \Gamma ^j, \Lambda \rangle$. 
\qed

\begin{rema}
(1) As shown in Corollary 8.2 of Yamanouchi \cite{Y}, if we further assume that $\alpha ^1$, $\alpha ^2$ and $\beta $ are integrable, then $\alpha ^1$ is conjugate to $\alpha ^2$. In this case, $\beta $ contains the translation  of $\mathbf{R}$ as a direct product component.

\bigskip

(2) Another important difference between flows and actions of compact groups is about extended modular actions. The duals of extended modular flows are important examples of flows with faithful Connes--Takesaki modules (See Theorem 4.20 of Masuda--Tomatsu \cite{MT}). Actions of compact groups with faithful Connes--Takesaki modules are duals of skew products (See Definition 5.6 and Theorem 5.9 of Izumi \cite{I}). However, this is not true for flows by subsection \ref{model} of this paper and Theorem 4.20 of Masuda--Tomatsu \cite{MT}.
\end{rema}

\subsection{On a Characterization of the Rohlin Property}
\label{generalization}
One of the ultimate goals of the study of flows is to completely classify all flows on AFD von Neumann algebras. In order to achieve this, it is important to characterize the Rohlin property by using invariants for flows. A candidate for this characterization is the following conjecture.
\begin{conj}
\label{conj}
\textup{(See Section 8 of Masuda--Tomatsu \cite{MT})} Let $M$ be an AFD von Neumann algebra and let $\alpha $ be a flow on $M$. Let $\tilde{\alpha }:\mathbf{R} \curvearrowright \tilde{M}$ be a canonical extension of $\alpha$. Then the following three conditions are equivalent.

\textup{(1)} The action $\alpha $ has the Rohlin property.

\textup{(2)} We have $\pi _{\tilde{\alpha }}(\tilde{M})' \cap \tilde{M} \rtimes _{\tilde{\alpha }}\mathbf{R}=\pi _{\tilde{\alpha }}(Z(\tilde {M}))$.

\textup{(3)} The action $\alpha $ has full Connes spectrum and is centrally free.
\end{conj}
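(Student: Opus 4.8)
I would prove the three conditions equivalent by closing the cycle $(1)\Rightarrow(3)\Rightarrow(2)\Rightarrow(1)$, where $(1)\Rightarrow(3)$ and $(3)\Leftrightarrow(2)$ are ``soft'' consequences of spectral theory, and $(2)\Rightarrow(1)$ is the substantive implication that requires the constructions of Section 4. The implication $(1)\Rightarrow(3)$ is essentially already available (it is the easy direction of Conjecture 8.3 of Masuda--Tomatsu \cite{MT}): for each $p$ a Rohlin unitary $u\in M_{\omega,\alpha}$ with $\alpha_t(u)=e^{-ipt}u$ exhibits $p$ in the Arveson spectrum of $\alpha$ acting on the central sequence algebra $M_\omega$, and for AFD $M$ this spectrum computes $\Gamma(\alpha)$, so letting $p$ range over $\mathbf{R}$ yields full Connes spectrum; similarly a family of such Rohlin unitaries makes it impossible for $\alpha_{t_0}|_{M_\omega}$ to be inner for any $t_0\neq 0$, which is central freeness. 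I expect no difficulty here.

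\textbf{The equivalence $(3)\Leftrightarrow(2)$.} This is the translation between the ``spectral--outerness'' description in $(3)$ and the ``relative commutant in the crossed product'' description in $(2)$, and it should hold for an arbitrary flow on an AFD von Neumann algebra by the structure theory of the Connes spectrum (Connes--Takesaki \cite{CT}; see also the spectral analysis in Masuda--Tomatsu \cite{MT}). Concretely, after Takesaki duality $\pi_{\tilde\alpha}(\tilde M)'\cap(\tilde M\rtimes_{\tilde\alpha}\mathbf{R})$ is $(Z(\tilde M)\otimes B(L^2\mathbf{R}))\cap(\tilde M\rtimes_{\tilde\alpha}\mathbf{R})$, and this collapses to $\pi_{\tilde\alpha}(Z(\tilde M))$ precisely when $\tilde\alpha$ has full Connes spectrum and is properly outer; transporting through $\tilde M\rtimes_{\tilde\alpha}\mathbf{R}\rtimes\mathbf{R}\cong\tilde M\otimes B(L^2\mathbf{R})$, under which $\tilde\alpha$ becomes $\alpha\otimes\mathrm{id}$ as in Section 4, this is exactly ``$\alpha$ has full Connes spectrum and is centrally free''. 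I would record the precise version in terms of the closed subgroup $\mathrm{ker}(\mathrm{mod}(\alpha)\circ(\theta|_C))$ of $\mathbf{R}^2$, so as to line it up with the case division used below.

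\textbf{The implication $(2)\Rightarrow(1)$.} Here I would run the argument of Section 4: reduce (using Lemma 2.11 of Connes \cite{C} and the dominant-weight reduction) to a flow with invariant dominant weight, work inside the core $N$ with $\theta$ and $\tilde\alpha$, and note that it suffices to produce $\{u_n\}\subset U(N)$ as in Lemma \ref{key}. Decompose the $\mathbf{R}^2$-action $\mathrm{mod}(\alpha)\circ(\theta|_C)$ on $C$ according to its kernel. When the kernel is $0$, Feldman's Rohlin lemma (Theorem \ref{rp}) produces the $u_n$ inside $C\subset N$ using only freeness, and hypothesis $(2)$ is not even needed. In the remaining cases the central sequences have to be manufactured fibrewise over $C$ on the AFD factor $N_0$, exactly as in Lemmas \ref{14}--\ref{15}; the one place where the present paper genuinely used faithfulness of $\mathrm{mod}(\alpha)$ was Lemma \ref{15}, to conclude that the fibre actions of $\mathbf{Z}$, $\mathbf{Z}^2$ or $\mathbf{R}$ are trace-\emph{scaling} and hence covered by Theorem 6.18 of Masuda--Tomatsu \cite{MT}. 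Without faithfulness one must instead use $(2)$ (equivalently $(3)$) to see that these fibre actions, although possibly trace-preserving, are centrally free with full Connes spectrum, and then invoke the corresponding Rohlin-type theorem for such actions of $\mathbf{Z}$, $\mathbf{Z}^2$ and $\mathbf{R}$ on the AFD type $\mathrm{II}$ factors (an Ocneanu/Kishimoto-type Rohlin lemma in the outer, not-necessarily-trace-scaling setting), gluing along $C$ by the same direct-integral bookkeeping as in Section 4.

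\textbf{Main obstacle.} The hard step is the last one, and within it the crux is showing that ``$\alpha$ centrally free with full Connes spectrum'' descends to ``each fibre action on $N_\gamma$ is centrally free with full Connes spectrum'' for almost every $\gamma$ in the disintegration over $C$. This amounts to a quantitative comparison of the central sequence algebras of $M$, of the core $N$, and of the fibres $N_\gamma$ --- a strengthening of Lemma \ref{7} and of the trace-scaling dichotomy of Lemma \ref{15} --- and it is exactly here that the weaker hypothesis $(2)/(3)$ must do the work that faithfulness of $\mathrm{mod}(\alpha)$ did in the present paper. I would expect this fibrewise central-freeness transfer, together with pinning down the correct Rohlin lemma for outer trace-preserving $\mathbf{Z}^2$- and $\mathbf{R}$-actions on the AFD $\mathrm{II}_1$ and $\mathrm{II}_\infty$ factors with full Connes spectrum, to be the bulk of the work.
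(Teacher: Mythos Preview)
The statement you are attempting to prove is labelled a \emph{Conjecture} in the paper; there is no proof of it here to compare against. What the paper actually establishes is the partial result Theorem \ref{characterization}: the equivalence of (1)--(3) for flows on AFD type $\mathrm{III}_0$ von Neumann algebras \emph{conditionally} on the same equivalence for flows on the AFD $\mathrm{II}_\infty$ factor, and unconditionally only under the extra hypothesis that $\mathrm{mod}(\alpha)$ is nowhere trivial. The implications $(1)\Rightarrow(2)\Rightarrow(3)$ are quoted from Masuda--Tomatsu \cite{MT}; the paper does not claim $(3)\Rightarrow(2)$ as a soft spectral fact, and your duality sketch for this equivalence would need more care since conditions (2) and (3) live on different objects ($\tilde\alpha$ on $\tilde M$ versus $\alpha$ on $M$).

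Your reduction scheme for $(2)\Rightarrow(1)$ is exactly the one the paper carries out in Section 4 and Theorem \ref{characterization}, and you have correctly located the crux: once $\mathrm{mod}(\alpha)$ is no longer faithful, the fibre actions over $C$ need not be trace-scaling, so Lemma \ref{15} and Theorem 6.18 of \cite{MT} are unavailable. The genuine gap is your proposed substitute. For the discrete fibre actions of $\mathbf{Z}$ or $\mathbf{Z}^2$ an Ocneanu-type Rohlin lemma from central freeness is indeed available, but for the fibre \emph{flows} on the AFD $\mathrm{II}_\infty$ factor the statement ``centrally free with full Connes spectrum $\Rightarrow$ Rohlin'' is not a known ``Kishimoto-type Rohlin lemma'' you can invoke --- it is precisely Conjecture \ref{conj} specialised to type $\mathrm{II}_\infty$, which the paper explicitly leaves open (see Theorem \ref{characterization}(b) and the remark following it). So your argument is circular at exactly the point you flag as the main obstacle: the ``corresponding Rohlin-type theorem'' you plan to invoke for $\mathbf{R}$-actions is the conjecture itself.
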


We will give a partial answer for this conjecture by generalizing Theorem \ref{Rohlin}. We start off by showing the following lemma.

\begin{lemm}
Let $M$ be an AFD factor of type III. Let $\alpha $ be an automorphism of $M$ with trivial Connes--Takesaki module. Then $\alpha $ is centrally outer if and only if $\tilde{\alpha}^\gamma $ is outer for almost every $\gamma \in \Gamma$. Here, $C=L^\infty (\Gamma, \mu)$ is the center of $\tilde{M}$ and $\tilde{\alpha } =\int _\Gamma ^\oplus \tilde{\alpha }^\gamma d\mu (\gamma )$ is the direct integral decomposition. 
\label{ct}
\end{lemm}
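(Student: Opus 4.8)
The plan is to analyze the relationship between central outerness of $\alpha$ on $M$ and outerness of the components $\tilde\alpha^\gamma$ of the canonical extension on the fibers $\tilde{M}^\gamma$. Since $\alpha$ has trivial Connes--Takesaki module, $\tilde\alpha$ acts fiberwise over the center $C = L^\infty(\Gamma,\mu)$ of $\tilde M$, so the direct integral decomposition $\tilde\alpha = \int_\Gamma^\oplus \tilde\alpha^\gamma \, d\mu(\gamma)$ makes sense, and each $\tilde{M}^\gamma$ is an AFD factor of type $\mathrm{II}_\infty$ (or $\mathrm{II}_1$). The key object linking the two sides is the structure of $M^\omega_\omega$ (the asymptotic centralizer) versus the asymptotic structure of $\tilde M$; the cleanest route, though, is to pass through a known characterization of central triviality in terms of the canonical extension.

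First I would recall (from Kawahigashi--Sutherland--Takesaki, or from the general theory used in the excerpt) that an automorphism $\alpha$ of an AFD factor of type $\mathrm{III}$ is centrally trivial if and only if its canonical extension $\tilde\alpha$ is \emph{inner}, i.e. implemented by a unitary in $\tilde M$; more precisely, $\alpha \in \overline{\mathrm{Int}}(M)$ is detected by $\mathrm{mod}$, and $\alpha$ being centrally trivial corresponds to $\tilde\alpha$ being inner on $\tilde M$. So I want to show: $\tilde\alpha$ is inner on $\tilde M$ $\iff$ $\tilde\alpha^\gamma$ is inner on $\tilde M^\gamma$ for almost every $\gamma$. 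The forward direction is immediate by restricting an implementing unitary to each fiber. For the converse, the issue is measurable selection: given that each fiber admits an implementing unitary, one must assemble a measurable field $\gamma \mapsto u_\gamma \in U(\tilde M^\gamma)$ with $\tilde\alpha^\gamma = \mathrm{Ad}(u_\gamma)$. This is a standard but nontrivial measurable-choice argument: the set of pairs $(\gamma, u)$ with $u \in U(\tilde M^\gamma)$ implementing $\tilde\alpha^\gamma$ is Borel in the appropriate sense, has non-empty fibers a.e., and hence admits a measurable section by von Neumann's selection theorem. Taking $u = \int_\Gamma^\oplus u_\gamma \, d\mu(\gamma)$ then implements $\tilde\alpha$.

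Next I would combine this equivalence with the analogous fiberwise statement relating outerness of $\tilde\alpha^\gamma$ to central outerness. Actually the cleanest framing: ``$\alpha$ centrally outer'' means $\alpha$ is \emph{not} centrally trivial, which by the above equals ``$\tilde\alpha$ not inner on $\tilde M$'', which equals ``$\tilde\alpha^\gamma$ not inner on $\tilde M^\gamma$ for a non-null set of $\gamma$''. Here I would need the point that since $\tilde M^\gamma$ is a type $\mathrm{II}$ factor, outerness and non-innerness coincide and both are equivalent to the property ``$\tilde\alpha^\gamma_t$ is outer for all $t \neq 0$'' being replaced by the single-automorphism statement in the lemma; one has to be slightly careful that the statement of the lemma quantifies over ``almost every $\gamma$'' on both sides, so I would show the two measurable sets $\{\gamma : \tilde\alpha^\gamma \text{ inner}\}$ and the set where $\alpha$ fails to be centrally outer "fiberwise" agree up to null sets --- which again reduces to measurability of the innerness locus.

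The main obstacle I expect is precisely the measurable-selection/measurability step: verifying that $\{\gamma : \tilde\alpha^\gamma \text{ is inner}\}$ is a measurable subset of $\Gamma$, and that one can measurably select implementing unitaries where they exist. This requires knowing that the field $\gamma \mapsto \tilde M^\gamma$ together with the field $\gamma \mapsto \tilde\alpha^\gamma$ is measurable in the sense of measurable fields of von Neumann algebras with automorphisms (which follows from the general theory of direct integral decomposition, as the excerpt already invokes a similar ``Fact'' about measurability of $\theta_{t,\gamma}$ with a proof deferred to the appendix), and then applying a Jankov--von Neumann type uniformization theorem on the Polish space of unitaries. Everything else --- the equivalence between central triviality and innerness of the canonical extension, and between innerness and outerness in the type $\mathrm{II}$ setting --- is either standard or already available in the cited literature.
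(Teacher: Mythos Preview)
Your approach is essentially the same as the paper's, which simply cites Proposition~5.4 of Haagerup--St\o rmer \cite{HS2} (the equivalence between central triviality of $\alpha$ and innerness of $\tilde\alpha$ on $\tilde M$) together with Theorem~3.4 of Lance \cite{La} (the direct-integral statement that an automorphism fixing the center is inner iff its fibers are inner a.e., with the measurable selection built in). Your two ingredients match these two citations exactly.

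There is, however, a genuine gap in your negation step. From ``$\tilde\alpha$ inner on $\tilde M$ $\iff$ $\tilde\alpha^\gamma$ inner for a.e.\ $\gamma$'' you correctly obtain
\[
\alpha \text{ centrally outer} \iff \tilde\alpha^\gamma \text{ outer on a \emph{non-null} set of } \gamma,
\]
but the lemma asserts the stronger conclusion ``outer for \emph{almost every} $\gamma$''. Your proposed fix (``reduces to measurability of the innerness locus'') does not close this; measurability of $E:=\{\gamma:\tilde\alpha^\gamma\text{ inner}\}$ says nothing about whether $E$ is null or conull. What is needed is a $0$--$1$ law: since $M$ is a factor, the dual action $\theta$ is ergodic on $C=L^\infty(\Gamma,\mu)$, and since $\tilde\alpha$ commutes with $\theta$, the isomorphisms $\theta_{t,\gamma}:\tilde M_\gamma\to\tilde M_{T_t\gamma}$ intertwine $\tilde\alpha^\gamma$ with $\tilde\alpha^{T_t\gamma}$, so $E$ is $\theta$-invariant and hence null or conull. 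With this observation added, your argument is complete and coincides with what the cited references provide.
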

\begin{proof}
This is shown by Proposition 5.4 of Haagerup--St\o rmer \cite{HS2} and Theorem 3.4 of Lance \cite{La}.
\end{proof}
In order to state our theorem, we define the following notion.
\begin{defi}
Let $C$ be an abelian von Neumann algebra and let $\beta $ be a flow on $C$. Then $\beta $ is said to be nowhere trivial if for any $e\in \mathrm{Proj}(C^\beta )$, $\beta \mid _{C_e}$ is not $\mathrm{id}_{C_e}$ as a flow.
\end{defi}
The following theorem means that we need not consider Conjecture \ref{conj} for flows on AFD von Neumann algebras of type $\mathrm{III}_0$ anymore.
\begin{theo}
\label{characterization}

\textup{(a)} Let $M$ be a von Neumann algebra of type $\mathrm{III}_0$ and $\alpha $ be a flow on $M$. Assume that $\mathrm{mod}(\alpha )$ is nowhere trivial, then conditions \textup{(1)--(3)} in the above conjecture are all equivalent to the following condition.

\textup{(4)} The action $\alpha $ is centrally free.

\textup{(b)} If conditions \textup{(1)--(3)} are equivalent for flows on the AFD factor of type $\mathrm{II}_\infty $, then these conditions are also equivalent for flows on AFD von Neumann algebras of type $\mathrm{III}_0$. 
\end{theo}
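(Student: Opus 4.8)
The plan rests on a dichotomy for flows on AFD von Neumann algebras of type $\mathrm{III}_0$. Reducing to the factor case by a direct integral, write $(C,\theta)$ for the flow of weights and $\beta=\mathrm{mod}(\alpha)$. Since $\theta$ is ergodic and commutes with $\beta$, the join $f$ of all $\beta$-invariant projections $e$ with $\beta\!\mid_{C_e}=\mathrm{id}$ is again a projection of the same kind, and so is each $\theta_s(f)$; maximality of $f$ then forces $\theta_s(f)=f$, whence $f\in\{0,1\}$ by ergodicity. Thus $\mathrm{mod}(\alpha)$ is either nowhere trivial or equal to $\mathrm{id}$, and when it is nowhere trivial its kernel $\{t:\beta_t=\mathrm{id}\}$, a proper closed subgroup of $\mathbf R$ that is not $\mathbf R$, is either $0$ (so $\mathrm{mod}(\alpha)$ is faithful) or $q_0\mathbf Z$ with $q_0>0$ (so $\mathrm{mod}(\alpha)$ is periodic). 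In either statement it suffices to establish the single hard implication: $(4)\Rightarrow(1)$ in (a), and $(3)\Rightarrow(1)$ in (b); the implications $(1)\Rightarrow(2)\Rightarrow(3)\Rightarrow(4)$ hold for every flow.

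For (a), if $\mathrm{mod}(\alpha)$ is faithful then $(4)\Rightarrow(1)$ is Theorem \ref{Rohlin}. If $\mathrm{mod}(\alpha)$ is periodic of minimal period $q_0$, I would follow the architecture of the proof of Theorem \ref{Rohlin}: after passing to an invariant dominant weight it is enough to produce, for each $p\in\mathbf R$, a centralizing sequence of unitaries of the continuous core $N$ that is asymptotically $\theta$-fixed and an asymptotic $e^{ipt}$-eigenvector for $\tilde\alpha$. Now $K:=\ker\bigl(\mathrm{mod}(\alpha)\circ(\theta\!\mid_C)\bigr)\subseteq\mathbf R^2$ contains $(0,q_0)$, meets the ``$\theta$-axis'' only in the origin because $\theta\!\mid_C$ is free, and is discrete; hence $K=\mathbf Z(0,q_0)$ or $K=\mathbf Z(0,q_0)\oplus\mathbf Z v$ with $v$ having nonzero $\theta$-component. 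Decomposing $N$ over $C^{\beta}$ makes $\tilde\alpha$ fibrewise; each fibre flow is periodic of period $q_0$ on its centre and is, via the identification of a flow built under the constant function $q_0$ with the suspension of a single automorphism (the computation being that of Lemma \ref{14}), governed by $\tilde\alpha_{q_0}$ on the factorial fibres of $N$ over $C$ — together, in the second case, with a trace-scaling direction obtained from $v$ and acting trivially on $C$. Central freeness of $\alpha$ makes $\alpha_{q_0}$ centrally outer, so by Lemma \ref{ct} the automorphism $\tilde\alpha_{q_0}$ is outer on almost every factorial fibre, and since it is trace-preserving on those AFD $\mathrm{II}_\infty$ factors it is in fact centrally outer there; Connes' and Ocneanu's non-commutative Rohlin lemmas (for $\mathbf Z$ and $\mathbf Z^2$) and Theorem 6.18 of \cite{MT} (for the trace-scaling direction) then give the Rohlin property fibrewise, and patching these unitaries measurably to a Feldman--Lind Rohlin tower for the residual free ergodic $\theta$-action on $C$ (Theorem \ref{rp} with $d=1$) yields the required unitaries in $N$, as in the proofs of Lemmas \ref{z}, \ref{z2} and \ref{r}. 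This proves (a), and with it the equivalence of $(1)$--$(4)$ there.

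For (b), the dichotomy and part (a) reduce us to a flow $\alpha$ on an AFD $\mathrm{III}_0$ factor with $\mathrm{mod}(\alpha)=\mathrm{id}$, i.e.\ an approximately inner flow; then $\tilde\alpha$ fixes $C=Z(\tilde M)$ and decomposes as $\int^{\oplus}_{\Gamma}\tilde\alpha^{\gamma}\,d\mu(\gamma)$ with each $\tilde\alpha^{\gamma}$ a trace-preserving flow on the AFD $\mathrm{II}_\infty$ factor $N_{\gamma}$. The hypothesis applied fibrewise gives, for almost every $\gamma$, that $\tilde\alpha^{\gamma}$ has the Rohlin property iff it is centrally free with full Connes spectrum. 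These two fibrewise properties match the corresponding global ones for $\alpha$: central freeness by Lemma \ref{ct}, and fullness of $\Gamma(\cdot)$ because $\Gamma$ is preserved by the canonical extension and because $\Gamma(\tilde\alpha)$ is the essential intersection of the $\Gamma(\tilde\alpha^{\gamma})$. Assuming $(3)$ for $\alpha$, one therefore gets $(1)$ for almost every $\tilde\alpha^{\gamma}$; a measurable selection of fibrewise Rohlin unitaries, glued to a $\theta$-Rohlin tower in $C$ supplied by Theorem \ref{rp} (here $\theta$ is free, ergodic and aperiodic), then produces, for each $p$, the unitaries in $N$ witnessing the Rohlin property of $\alpha$. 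Combined with $(1)\Rightarrow(2)\Rightarrow(3)$ this gives the equivalence of $(1)$--$(3)$ for all flows on AFD $\mathrm{III}_0$ von Neumann algebras.

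The main obstacles are concentrated in the periodic case of (a) and in (b). First, one must pin down exactly the fibre actions produced by the decomposition over $C^{\beta}$ (respectively over $C$) and check that each falls within the scope of an available Rohlin theorem — for (a) this is the reduction of a periodic fibre flow to the suspension of one trace-preserving automorphism of the AFD $\mathrm{II}_\infty$ factor, together with the passage from outerness to central outerness for such an automorphism; for (b) it is the assumed validity of the conjecture on $\mathrm{II}_\infty$. Second, one must carry out a genuinely measurable selection of fibrewise Rohlin data and glue it in a way that is compatible with the $\theta$-action, which is precisely what the Feldman--Lind Rohlin towers are for. Finally, the bookkeeping relating the Connes spectrum of $\alpha$ to its fibrewise values, under both the canonical extension and direct integration, has to be made precise; this is the technical heart of matching condition $(3)$ to its fibrewise version.
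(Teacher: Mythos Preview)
Your plan is essentially the paper's own proof. Both arguments first dispose of the implications $(1)\Rightarrow(2)\Rightarrow(3)\Rightarrow(4)$ by citing Masuda--Tomatsu; for $(4)\Rightarrow(1)$ in (a) both split according to whether $\mathrm{mod}(\alpha)$ is faithful (Theorem \ref{Rohlin}) or periodic, and in the periodic case reduce, via the decomposition of $N$ over the $\mathrm{mod}(\alpha)$-fixed part of $C$, to a $\mathbf Z$- or $\mathbf Z^2$-action on the AFD $\mathrm{II}_\infty$ fibres, with Lemma \ref{ct} supplying central outerness in place of the trace-scaling used in Lemma \ref{15}; both then glue along a Feldman--Lind tower for $\theta$. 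Your case split ``$K\cong\mathbf Z$ vs.\ $K\cong\mathbf Z^2$'' is exactly the paper's ``$\theta\!\mid_{C^{\mathrm{mod}(\alpha)}}$ free vs.\ periodic'' (parallel to Lemmas \ref{z} and \ref{z2}). For (b), the paper likewise reduces the $\mathrm{mod}(\alpha)=\mathrm{id}$ case to the fibre flows $\tilde\alpha^{\gamma}$ on AFD $\mathrm{II}_\infty$ and invokes the assumed $\mathrm{II}_\infty$ equivalence, ``parallel to Lemma \ref{r}''.

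One point where the paper is slightly more careful: the passage from general $M$ of type $\mathrm{III}_0$ to the factor case is not a single direct integral. The paper's Step 2 first reduces to centrally ergodic $\alpha$ and then branches on $\alpha\!\mid_{Z(M)}$ (faithful, periodic, or trivial), handling the first two cases directly with Theorem \ref{rp} and a Lemma \ref{z}-type argument before landing in the factor situation. Your opening sentence ``reducing to the factor case by a direct integral'' skips this, since a centrally ergodic flow on a non-factor need not decompose into flows on factors. Conversely, you are more explicit than the paper about the bookkeeping needed in (b) to transfer full Connes spectrum to the fibres; the paper's one-line reference to Lemma \ref{r} leaves that implicit.
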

\begin{proof}
\textbf{Step 0.} The implications (1) $\Rightarrow $ (2) and (2) $\Rightarrow $ (3) are shown in Lemma 3.17 and Corollary 4.13 of Masuda--Tomatsu \cite{MT}. The implication (3) $\Rightarrow $ (4) is trivial. 

\textbf{Step 1.}  First, we show (a) and (b) when $M$ is a factor. 

(a) We show the implication (4) $\Rightarrow $ (1). If $\mathrm{mod}(\alpha ):\mathbf{R} \curvearrowright Z(M) $ is faithful, then $\alpha $ satisfies condition (1)  by Theorem \ref{Rohlin}. In the following, we assume that $\mathrm{mod}(\alpha ) $ is not faithful. By the ergodicity of $\theta $, $\mathrm{mod}(\alpha ) $ has a non-trivial period $p\in (0, \infty)$. Since $\theta $ is faithful and commutes with $\mathrm{mod}(\alpha )$, $C^{\mathrm{mod}(\alpha )}$ is not trivial. Hence, the restriction of $\theta $ to $C^{\mathrm{mod}(\alpha )}$ is either free or periodic.

 When the restriction of  $\theta $ to $C^{\mathrm{mod}(\alpha )}$ is free, then the proof goes parallel to Lemma \ref{z}, using Lemma \ref{ct}.

When the restriction of $\theta $ to $C^{\mathrm{mod}(\alpha ) }$ is periodic, then the proof goes parallel to Lemma \ref{z2}.

(b) What remains to do is to reduce the case when $\mathrm{mod}(\alpha ) $ is trivial to Conjecture \ref{conj} for flows on the AFD factor of type$\mathrm{II}_\infty $. This goes parallel to the proof of Lemma \ref{r}.

\textbf{Step 2.} Next, we consider the proof of this theorem for the case when $M$ is not a factor. Decomposing into a direct integral, we may assume that $\alpha $ is centrally ergodic. We need to consider the case when $\alpha \mid_{Z(M)}$ is faithful, the case when $\alpha \mid _{Z(M)}$ has a non-trivial period and the case when $\alpha \mid _{Z(M)}$ is trivial separately. When $\alpha |_{Z(M)}$ is faithful, the implication (4) $\Rightarrow $ (1) follows from Theorem \ref{rp} and Proposition \ref{A.4}. When $\alpha |_{Z(M)}$ has non-trivial period, then the proof is similar to that of Lemma \ref{z}. When $\alpha |_{Z(M)}$ is trivial, then the implication follows from the case when $M$ is a factor.
\end{proof}
\begin{rema}
By the same argument, it is possible to reduce Conjecture \ref{conj} for flows on the AFD factor of type $\mathrm{III}_\lambda $ \textup{(}$0<\lambda <1$\textup{)}, $\mathrm{III}_1$ to Conjecture \ref{conj} for actions of $\mathbf{R}\times \mathbf{Z}$, actions of $\mathbf{R}^2$ on the AFD factor of type  $\mathrm{II}_\infty $, respectively.
\end{rema}

\section{Appendix}
\label{appendix}
In this section, we explain the proof of two statements which are used in the proof of the main theorem.

\bigskip

\subsection{Proof of Theorem \ref{rp}}
For readers who do not have any access to Feldman \cite{F2}, we will explain the outline of the proof of Theorem \ref{rp}.

\bigskip

\textit{Proof of Theorem \ref{rp}.}

The proof consists of two parts. The first is, for any cube $Q$ of $\mathbf{R}^d$, constructing a $Q$-set $F$ with $\mu (QF)>0$. This part is shown by the same argument as in the proof of Lemma of Lind \cite{L} (Note that Wiener's ergodic theorem holds for actions without invariant measures). The second is to show this theorem by using the first part. This is achieved by the same argument as in the proof of Theorem 1 of Feldman--Lind \cite{FL}. In the proof, they show two key statements (Statements (i) and (ii) in p.341 of Feldman--Lind \cite{FL}). We need statements corresponding to them. Let $L$, $N$, $P$ be positive natural numbers. Assume that $P$ is a multiple of $L$. Set
\[ Q_P:=[0,P)^d,\]
\[ S_L(Q_P):=\{ t=(t_1, \cdots , t_d)\in \mathbf{R}^d\mid \frac{P}{L}\leq t_j <P-\frac{P}{L} \ \mathrm{for} \ \mathrm{all} \ j\},\]
\[ B_N(Q_P):=\{ t=(t_1, \cdots , t_d)\in \mathbf{R}^d\mid -N \leq t_j <P+N \ \mathrm{for} \ \mathrm{all} \ j\}\setminus Q_P,\]
\[ C_{P/L}:=\{ n=(n_1, \cdots , n_d)\in \mathbf{Z}^d \mid 0\leq n_j <\frac{P}{L} \ \mathrm{for} \ \mathrm{all} \ j\}.\]
The corresponding statements are the following.

\bigskip

(i)' Let $\eta >0$ be a positive number. Then for any sufficiently large even integer $M$, any integer $L$, any multiple $P=NLM$ of $LM$ and any $Q_P$-set $F$, we have 
\[ \mu (B_{2N}(Q_P)(tF))<\eta \]
for over $9/10$ of the elements $t$ of $C_{P/L}$.

\bigskip

(ii)' Let $\xi>0$ be a positive number. Then for any sufficiently large integer $L$, any integer $M$, any multiple $P=NLM$ of $LM$ by a multiple $N$ of $L$ and any $Q_P$-set $F$, we have
\[ \mu (S_L(Q_N)(NC_{P/L})(tF))>\mu (Q_P(tF))-\xi \]
for over $9/10$ of the elements $t$ of $C_{P/L}$.

\bigskip

The other parts of the proof is the same as that of Theorem 1 of Feldman--Lind \cite{FL}.    
\qed

\bigskip

We may assume that $X$ is a compact metric space and the map $T:\mathbf{R}^d\times X\to X$ is continuous.
\begin{lemm}
\label{A.3}
In the context of Theorem \ref{rp}, the set $F$ can be chosen to be a Borel subset of $X$.
\end{lemm}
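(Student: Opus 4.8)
The plan is to take the $Q$-set $F$ produced by the proof of Theorem \ref{rp} and show that it can be replaced by a Borel set without changing any of the relevant properties. Recall that in the argument, $F$ is obtained as a (measurable) set on which the flow $T$ restricted to a cube acts injectively, so that $\mu(QF)>0$ and $\mu(T_{\bigcap_{t\in P}(t+Q)}F)>1-\epsilon$. The strategy is: first choose a Borel set $F_0$ with $\mu(F_0\triangle F)=0$ (possible since $(X,\mu)$ is standard and $F$ is $\mu$-measurable); then shrink $F_0$ slightly to a Borel subset $F_1\subseteq F_0$ on which injectivity of the map $Q\times F_1\ni(t,x)\mapsto T_t(x)$ holds \emph{everywhere}, not just almost everywhere; and finally check that the measure estimates survive, because we have only thrown away a null set.

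The key steps, in order, are as follows. First, using that $X$ is a compact metric space and $T:\mathbf{R}^d\times X\to X$ is continuous (which we may assume, as noted before the lemma), the ``bad set''
\[
B:=\{x\in X\mid \exists\, s,t\in Q,\ s\neq t,\ T_s(x)=T_t(x)\}
\]
is an analytic set: it is the projection to $X$ of the Borel (indeed closed) set $\{(s,t,x)\in Q\times Q\times X\mid s\neq t,\ T_s(x)=T_t(x)\}$ along the compact fibre $Q\times Q$, hence it is in fact Borel (projection along a compact fibre of a Borel set with suitable sections is Borel; alternatively $B$ is $\sigma$-compact as a continuous image of a $\sigma$-compact set). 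Since $F$ is a $Q$-set, $\mu(B\cap F)=0$, so $\mu(B)$ need not vanish but $\mu(B\cap F)=0$. Second, pick a Borel $F_0$ with $\mu(F_0\triangle F)=0$ and set $F_1:=F_0\setminus B$, a Borel set with $\mu(F_1\triangle F)=0$; by construction the map $Q\times F_1\to X$ is genuinely injective. Third, $T_Q F_1$ and $T_{\bigcap_{t\in P}(t+Q)}F_1$ are continuous images of $\sigma$-compact sets, hence $\sigma$-compact, hence Borel (so in particular $\mu$-measurable, matching condition (2) in the definition of a $Q$-set); and since $F_1\triangle F$ is $\mu$-null and $T$ is non-singular, $\mu(T_S F_1)=\mu(T_S F)$ for every cube-intersection $S$, so the quantitative bound $\mu(T_{\bigcap_{t\in P}(t+Q)}F_1)>1-\epsilon$ is preserved and $\mu(T_Q F_1)=\mu(T_Q F)>0$. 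Thus $F_1$ is a Borel $Q$-set with all the asserted properties, and we rename it $F$.

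The main obstacle I anticipate is the measurability of the projected ``bad set'' $B$ and of the sets $T_Q F$: one must be careful that projections of Borel sets are in general only analytic, not Borel, so the argument genuinely uses that the fibres being projected out ($Q$, or $Q\times Q$) are compact, which upgrades ``analytic'' to ``$\sigma$-compact, hence Borel''. Once this point is handled, the rest is bookkeeping with null sets and the non-singularity of $T$. A secondary subtlety is ensuring that after passing to $F_1$ the map $Q\times F_1\to T_QF_1$ remains a non-singular isomorphism onto its image with respect to $\mathrm{Lebesgue}\otimes\nu$ and $\mu$, as used later in Propositions \ref{A.4} and \ref{A.5}; but since we have only removed a $\mu$-null set and $T$ is non-singular, the transported measure $\nu$ changes only on a $\nu$-null set, so this causes no difficulty.
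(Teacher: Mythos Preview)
Your approach is genuinely different from the paper's and, unfortunately, contains a real gap. The paper does not modify $F$ after the fact; it goes back into Lind's construction (the set $D$ and the return-time functions $\psi_j^{\pm}$ on p.~181 of \cite{L}) and checks that each object produced is Borel, so that $F$ comes out Borel from the start.

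Your argument breaks at the step ``since $F_1\triangle F$ is $\mu$-null and $T$ is non-singular, $\mu(T_S F_1)=\mu(T_S F)$.'' Non-singularity gives $\mu(T_t N)=0$ for each fixed $t$ when $N$ is null, but $T_S N=\bigcup_{t\in S}T_t N$ is an \emph{uncountable} union of null sets and need not be null. For the translation action of $\mathbf{R}$ on itself with Lebesgue measure, $N=\mathbf{Q}$ is null while $[0,1]+\mathbf{Q}=\mathbf{R}$. Worse, in the situation at hand the cross-section $F$ typically has $\mu(F)=0$ (it is a transversal to a flow), so \emph{every} Borel null set $F_0$, including $F_0=\emptyset$, satisfies $\mu(F_0\triangle F)=0$; your criterion therefore cannot possibly pin down a set with $\mu(T_QF_0)>0$. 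The relevant ``null'' notion is $\nu$-null on $F$ (equivalently, $T_Q(\cdot)$ has $\mu$-measure zero), not $\mu$-null, and you have no independent handle on $\nu$ before the Borel structure of $F$ is established.

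There is a second, smaller gap: removing the stabiliser set $B$ only rules out collisions $T_s(x)=T_t(x)$ within a single orbit. It does nothing about collisions $T_s(x)=T_t(y)$ with $x\neq y$ both in $F_0$. If $x\in F_0\setminus F$ happens to lie in $T_Q y$ for some $y\in F$ (and $0\in Q$), injectivity of $Q\times F_1\to X$ fails even after deleting $B$. Also, your claim that $T_QF_1$ is $\sigma$-compact assumes $F_1$ is $\sigma$-compact, but you only produced a Borel $F_1$; the image is then merely analytic. That would still be $\mu$-measurable, but it is not what you asserted. To make a modification-after-the-fact argument work you would need to shrink $F$ in the $\nu$-sense, which essentially requires knowing the Borel structure already; this is why the paper instead verifies Borel-ness inside Lind's construction.
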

\begin{proof}
This follows from the proof of Lemma of Lind \cite{L}. By removing a null set, we may assume that the set $D$ in p.181 of Lind \cite{L} is a Borel subset of $X$. Then the set 
\[\{ (t,x) \in \mathbf{R}^n \times X \mid T_t(x) \in D \}\]
is a Borel subset of $Q\times X$. Hence by Fubini's theorem, the map $\psi _j^{\pm }(x)$ in p.181 of Lind is Borel measurable. Thus the set $F$ can be chosen to be a Borel subset.
\end{proof}
\begin{prop}
\label{A.4}
In the context of Theorem \ref{rp}, the map 
\[ Q\times F \ni (t,x) \mapsto T_t(x) \in T_QF\]
is a Borel isomorphism.
\end{prop}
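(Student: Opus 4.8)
The statement to prove is Proposition \ref{A.4}: in the context of Feldman's Rohlin-type theorem, with $X$ a compact metric space, $T:\mathbf{R}^d\times X\to X$ continuous, $Q$ a cube, and $F$ a Borel $Q$-set, the map $Q\times F\ni(t,x)\mapsto T_t(x)\in T_QF$ is a Borel isomorphism.

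\medskip

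The plan is to check the three requirements separately: the map is Borel, it is injective, and its inverse is Borel. Injectivity is given for free — it is precisely condition (1) in the definition of a $Q$-set, so nothing needs to be done there. For measurability of the forward map, note that $Q\times F$ is a Borel subset of $\mathbf{R}^d\times X$ (since $F$ is Borel by Lemma \ref{A.3} and $Q$ is closed), and $T$ is continuous, hence Borel; the restriction of a Borel map to a Borel subset is Borel, and the corestriction to the Borel subset $T_QF$ (measurable by condition (2) of the definition of a $Q$-set) is again Borel. So the only real content is showing the inverse map is Borel measurable.

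\medskip

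For the inverse, I would invoke a standard descriptive-set-theory fact: an injective Borel map between standard Borel spaces has Borel range, and the inverse map defined on that range is Borel (Lusin--Souslin theorem; see Kechris, \emph{Classical Descriptive Set Theory}, Theorem 15.1). Here $Q\times F$ and $T_QF$ are standard Borel spaces (as Borel subsets of Polish spaces, namely $\mathbf{R}^d\times X$ and $X$), and the forward map is an injective Borel map, so its inverse $T_QF\to Q\times F$ is automatically Borel. This is the cleanest route and I expect it is what the author intends, since everything else is immediate from the definitions. Alternatively, one could give a hands-on argument: cover $Q\times F$ by countably many Borel pieces on which the map is a homeomorphism onto its image (using a countable base of $X$ and continuity of $T$), observe the images are Borel and disjoint, and patch the local inverses — but this essentially re-proves Lusin--Souslin in this special case.

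\medskip

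The main obstacle, such as it is, is purely expository: making sure the reader accepts that $F$ is genuinely Borel (handled by the preceding Lemma \ref{A.3}) and that $T_QF$ is Borel (built into the definition of a $Q$-set), so that the Lusin--Souslin machinery applies. There is no analytic difficulty; the statement is a formal consequence of continuity of $T$, Borelness of $F$, the injectivity clause, and a classical theorem on injective Borel maps. I would keep the proof to a few lines: cite Lemma \ref{A.3} for Borelness of $F$, note measurability of $(t,x)\mapsto T_t(x)$ from continuity of $T$, note injectivity is part of the $Q$-set hypothesis, and conclude via the Lusin--Souslin theorem that the inverse is Borel.
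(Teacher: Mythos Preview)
Your proposal is correct and matches the paper's proof essentially line for line: the paper also invokes Lemma \ref{A.3} to ensure $F$ is Borel, observes that the map is a Borel bijection, and then appeals to the standard fact that a Borel bijection between standard Borel spaces is a Borel isomorphism. The only cosmetic difference is the reference used for this last step --- the paper cites Corollary A.10 of Takesaki \cite{T} rather than the Lusin--Souslin theorem in Kechris, but these are the same result.
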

\begin{proof}
By Lemma \ref{A.3}, if $C\subset T_QF $ is a Borel subset, then $C$ is also Borel in $X$. Hence the map $Q\times F \ni (t,x)\mapsto T_t(x) \in T_QF$ is a Borel bijection. Hence by Corollary A.10 of Takesaki \cite{T}, this map is a Borel isomorphism.
\end{proof}
\begin{prop}
\label{A.5}
In the context of Theorem \ref{rp}, if $\mathbf{R}^d=\mathbf{R}$, then the map
\[ Q\times F \ni (t,x) \mapsto T_t(x) \in T_QF\]
is non-singular.
\end{prop}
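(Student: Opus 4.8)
The plan is to establish the only nontrivial content of the statement: the Borel isomorphism $\Phi\colon Q\times F\to T_QF$, $\Phi(t,x)=T_t(x)$, provided by Proposition \ref{A.4}, is non-singular, i.e.\ $\Phi_*(\mathrm{Lebesgue}\otimes\nu)$ and $\mu|_{T_QF}$ have the same null sets. It is harmless to replace $\nu$ by $\nu_0(A):=\mu(T_QA)$, since these differ by the positive constant $\mu(T_QF)$ and only measure classes matter. Writing $\Phi^{-1}=(r(\cdot),\pi_F(\cdot))\colon T_QF\to Q\times F$ (Borel by Proposition \ref{A.4}), the first step is the identity $\nu_0=(\pi_F)_*(\mu|_{T_QF})$, which follows from $T_QA=\pi_F^{-1}(A)\cap T_QF$ for Borel $A\subseteq F$. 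Feeding this into Fubini and using $T_r(\pi_F(y))=T_{r-r(y)}(y)$, I would arrive at the working formula, valid for every Borel $E\subseteq T_QF$,
\[
\Phi_*(\mathrm{Lebesgue}\otimes\nu_0)(E)=\int_{T_QF} m\bigl(\,(Q-r(y))\cap\{\,s\in\mathbf{R}:T_s(y)\in E\,\}\,\bigr)\,d\mu(y),
\]
where $m$ is Lebesgue measure and $Q-r(y)$ is an interval of length $|Q|$ containing $0$, in particular contained in $[-|Q|,|Q|]$. Everything then reduces to showing that the right-hand side is $0$ if and only if $\mu(E)=0$.

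The ``only if'' implication is routine: bounding $Q-r(y)\subseteq[-|Q|,|Q|]$ and interchanging the integrals gives $\Phi_*(\mathrm{Lebesgue}\otimes\nu_0)(E)\le\int_{-|Q|}^{|Q|}\mu(T_{-s}E)\,ds$, which vanishes when $\mu(E)=0$ because each $T_{-s}$ is non-singular. The converse is the heart of the matter, and it rests on a lemma I would isolate first: \emph{every $Q$-set $F$ is $\mu$-null}, and hence so is every slice $T_c(F)$, $c\in\mathbf{R}$. To see this, note $\int_0^{|Q|}\mu(T_sF)\,ds=\int_X m\bigl(\{s\in[0,|Q|]:T_{-s}(x)\in F\}\bigr)\,d\mu(x)$ by Fubini; for each $x$ the inner set has at most one point, since two distinct points $T_{-s}(x),T_{-s'}(x)$ of $F$ on a common orbit would lie at flow-distance at most $|Q|$, which injectivity of $\Phi$ together with freeness of $T$ forbids; thus the left-hand side is $0$, and this forces $\mu(F)=0$, for otherwise non-singularity of $T_s$ would make $\mu(T_sF)>0$ for all $s$.

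Finally, for the converse, I would fix $E$ with $\mu(E)>0$, restrict the $s$-integration in the working formula to $(-\varepsilon,\varepsilon)$, and apply Fubini again to obtain
\[
\Phi_*(\mathrm{Lebesgue}\otimes\nu_0)(E)\ge\int_{-\varepsilon}^{\varepsilon}\mu\bigl(T_{Q\cap(Q-s)}F\cap T_{-s}E\bigr)\,ds.
\]
As $s\to 0$ the integrand tends to $\mu(E)$: strong continuity of the flow gives $\mu(T_{-s}E\,\triangle\,E)\to 0$, while $\mu\bigl(T_QF\setminus T_{Q\cap(Q-s)}F\bigr)=\mu\bigl(\{y:r(y)\in Q\setminus(Q-s)\}\bigr)$, and as $s\to 0$ the sets $\{y:r(y)\in Q\setminus(Q-s)\}$ decrease to a single slice $T_c(F)$ with $c$ an endpoint of $Q$, which is $\mu$-null by the lemma. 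Hence the integrand exceeds $\mu(E)/2$ for all sufficiently small $|s|$, so the integral is at least $\varepsilon\,\mu(E)>0$. This proves $\Phi_*(\mathrm{Lebesgue}\otimes\nu_0)\sim\mu|_{T_QF}$, as required. Measurability of $r$, $\pi_F$ and of the sets to which Fubini is applied is handled by Proposition \ref{A.4} and the measurable-selection facts already in use; I expect the null-slice lemma and its role in the last limit computation to be the only genuinely delicate points.
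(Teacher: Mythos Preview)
Your argument is correct and takes a genuinely different route from the paper's proof. The paper simply observes that the flow $T$ restricted to $T_QF$ is a flow built over the base $F$ with return map $S\colon F\to F$ and return-time (ceiling) function, and then invokes Lemma~3.1 of Kubo~\cite{K}, which gives exactly the non-singularity of the product-to-flow-box map in that setting. So the paper's proof is a one-line reduction to the literature on quasi-flows.

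Your approach is more hands-on: you express $\Phi_*(\mathrm{Leb}\otimes\nu_0)(E)$ via Fubini as $\int_{\mathbf{R}}\mu(T_{Q\cap(Q-s)}F\cap T_{-s}E)\,ds$, handle the easy direction by non-singularity of each $T_{-s}$, and for the hard direction combine two limit facts as $s\to 0$: the null-slice lemma (every $Q$-set is $\mu$-null, hence the boundary slices $T_cF$ are null) and $\mu(T_{-s}E\,\triangle\,E)\to 0$. This is self-contained and avoids the external reference; it also makes transparent \emph{why} the one-dimensional hypothesis enters (your null-slice lemma uses that an interval $Q$ meets any translate $Q-\delta$ with $|\delta|\le|Q|$). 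One point worth a remark: the step ``strong continuity of the flow gives $\mu(T_{-s}E\,\triangle\,E)\to 0$'' is true but not entirely innocent for merely non-singular flows---it is the continuity of the induced $\mathbf{R}$-action on the measure algebra, which follows for instance from automatic continuity of Borel homomorphisms into the Polish group $\mathrm{Aut}(X,[\mu])$, or can be checked directly using the standing assumption (made just before Lemma~\ref{A.3}) that $X$ is compact metric and $T$ continuous. A one-line justification or citation there would make your write-up airtight.
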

\begin{proof}
This is based on Lemma 3.1 of Kubo \cite{K}. The action $T$ of $\mathbf{R}$ on $X$ induces an action $\tilde{T}$ of $\mathbf{R}$ on $T_Q(F)$. Then $\tilde{T}$ defines an action $S$ of $\mathbf{Z}$ on $F$. Then $(F, \nu)$, $S$ and $(T_QF,\mu)$ satisfy the assumptions of Lemma 3.1 of Kubo \cite{K}.
\end{proof}
\subsection{On a Measurability of a Certain Map}
In the proof of Lemma \ref{10}, we use the fact that a map from a measured space to the automorphism group of a von Neumann algebra is measurable (See Fact between Lemma \ref{11} and Lemma \ref{13}). Probably it is well-known for specialists. However, we could not find appropriate references. Hence, we present the proof here.
\begin{prop} If we identify $N$ with $N_0 \otimes L^\infty ([0,p))$, the map $[0, p)^2\ni(t, \gamma ) \mapsto \theta _{t, \gamma }\in \mathrm{Aut}(N_0)$ is Lebesgue measurable. 
\end{prop}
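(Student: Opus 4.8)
The plan is to reduce the statement to the Lebesgue measurability of countably many scalar functions and to extract those from a Pettis-type argument applied to the flow $\theta$ itself.

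First I would record two preliminaries. Since $N$ is the core of the AFD factor $M$ it has separable predual, hence so does its direct integrand $N_0$; therefore $\mathrm{Aut}(N_0)$, in the $u$-topology, is a Polish group whose Borel $\sigma$-algebra is generated by the evaluation maps $\alpha\mapsto\langle\alpha(a),\varphi\rangle$ with $a$ in a $\sigma$-strongly$^*$ dense sequence of $N_0$ and $\varphi$ in a norm-dense sequence of $(N_0)_*$; so it is enough to prove, for each such $(a,\varphi)$, that $(t,\gamma)\mapsto\langle\theta_{t,\gamma}(a),\varphi\rangle$ is Lebesgue measurable on $[0,p)^2$. Second, writing $a\otimes 1\in N$ for the element corresponding to $a$ under the given identification $N\cong N_0\otimes L^\infty([0,p))$, I would unwind the definition $\theta_{t,\gamma}(x_\gamma)=(\theta_t(x))_{\gamma+t}$ together with the way the fibres $N_\gamma$ are identified with $N_0$, to obtain the identity
\[
\theta_{t,\gamma}(a)=\bigl(\theta_t(a\otimes 1)\bigr)_{(\gamma+t)\bmod p}\qquad(t,\gamma\in[0,p)),
\]
the right-hand fibre being read in the $N_0$-picture.

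With these in hand, fix $(a,\varphi)$ and set $f_t(\delta):=\langle(\theta_t(a\otimes 1))_\delta,\varphi\rangle$, so that $f_t\in L^\infty([0,p))\subset L^2([0,p))$ with $\|f_t\|_\infty\le\|a\|\,\|\varphi\|$. For $g\in L^2([0,p))$ one has $\langle f_t,g\rangle_{L^2}=\langle\theta_t(a\otimes 1),\varphi\otimes\bar g\rangle_N$ with $\varphi\otimes\bar g\in N_*$, which is continuous in $t$ since $\theta$ is a flow; thus $t\mapsto f_t$ is a weakly continuous $L^2([0,p))$-valued function, hence strongly measurable by the Pettis measurability theorem, hence represented by a jointly Lebesgue-measurable $F$ on $\mathbf{R}\times[0,p)$ with $|F|\le\|a\|\,\|\varphi\|$. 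The displayed identity then gives $\langle\theta_{t,\gamma}(a),\varphi\rangle=F\bigl(t,(\gamma+t)\bmod p\bigr)$ for a.e. $(t,\gamma)$, which is Lebesgue measurable; letting $(a,\varphi)$ range over the countable families completes the argument.

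I expect the main obstacle to be not the analytic step above — that is routine — but the bookkeeping behind the displayed identity. Because $\theta_p$ need not act trivially on the fibres of $\int_{[0,p)}^\oplus N_\gamma\,d\gamma$, one must read ``$N_\gamma$ for $\gamma\in\mathbf{R}$'' as the copy of $N_{\gamma\bmod p}$ transported along the cocycle $\{\theta_{s,\cdot}\}$ (not as a literal equality), and one must verify that the chosen identifications $N_\gamma\cong N_0$ are compatible with the way $\theta$ translates the base of the direct integral; once these conventions are fixed the verification is routine. A structurally cleaner but technically heavier alternative would be to apply the folklore fact that the fibre isomorphisms of an automorphism of a direct integral which normalises the base abelian algebra form a measurable field, to the automorphism $\Theta$ of $L^\infty(\mathbf{R},N)=\int_{\mathbf{R}\times[0,p)}^\oplus N_\gamma\,dt\,d\gamma$ defined by $(\Theta F)(t)=\theta_t(F(t))$, which induces on the base the transformation $(t,\gamma)\mapsto(t,\gamma+t\bmod p)$; one would then recover $(t,\gamma)\mapsto\theta_{t,\gamma}$ as a composite of this field with its specializations at $(\gamma,0)$ and $((\gamma+t)\bmod p,0)$, but this route still requires proving that folklore fact (by a fundamental-sequence argument) and the same identification bookkeeping.
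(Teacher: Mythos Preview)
Your proposal is correct and rests on the same underlying idea as the paper --- that norm-continuity of $t\mapsto\theta_t$ as a flow on $N$ upgrades, after disintegration over the base $[0,p)$, to joint Lebesgue measurability in $(t,\gamma)$ --- but you package it differently. The paper reduces instead to showing that $(t,\gamma)\mapsto\phi_0\circ\theta_{t,\gamma}\in(N_0)_*$ is measurable for each fixed $\phi_0\in(N_0)_*$; it sets $\phi=\phi_0\otimes\mathrm{id}\in N_*\cong L^1_{(N_0)_*}([0,p))$, uses norm-continuity of $s\mapsto\phi\circ\theta_s$ in $N_*$, and then carries out the approximation by Borel step functions \emph{by hand}: partition $[0,p)$ in the $s$-variable into pieces of width $<\delta$, approximate each $\phi\circ\theta_{s_i}$ in the $\gamma$-variable by a step function off a small set, and conclude that $(s,\gamma)\mapsto(\phi\circ\theta_s)_\gamma$ is approximated in measure by simple Borel functions. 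Your route replaces this explicit approximation by the Pettis measurability theorem (weak continuity of $t\mapsto f_t$ into separable $L^2([0,p))$ gives strong measurability, hence a jointly measurable representative $F$) and then composes with the measurable shear $(t,\gamma)\mapsto(t,(\gamma+t)\bmod p)$; this is shorter and conceptually cleaner, at the cost of invoking Pettis and the standard fact that a strongly measurable $L^2$-valued map admits a jointly measurable scalar representative. Your diagnosis that the real work lies in the fibre-identification bookkeeping behind the displayed identity is accurate; the paper leaves that step just as implicit as you do.
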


\begin{proof}  By Lusin's theorem, it is enough to show that the map $[0,p)^2 \ni (t, \gamma ) \mapsto \phi _0 \circ \theta _{t, \gamma } \in (N_0)_*$ is Lebesgue measurable for $\phi _0 \in (N_0)_*$. We identify $N_*$ with $L^1 _{(N_0)_*}([0,p))$ and set $\phi :=\phi _0\otimes \mathrm{id}$. Since the map $s\mapsto \phi \circ \theta _s \in L^1_{(N_0)_*}([0,p))$ is continuous, for any $\epsilon >0$, there exists a positive number $\delta $ such that 

\begin{equation}
 \| \phi \circ \theta _s -\phi \| <\epsilon ^2 
\end{equation}
for $| s|<\delta$. Take a partition $0=s_0<s_1< \cdots <s_n=p$ so that $| s_i-s_{i+1}| <\delta$. For each $i=0, \cdots , n$, the map $[0,p)\ni \gamma \mapsto (\phi \circ \theta _{s_i})_\gamma$ is Lebesgue measurable and integrable. Hence it is possible to approximate $\phi \circ \theta _{s_i}$ by Borel simple step functions, that is, for each $i$, there exists a compact subset $K_i$ of $[0,p)$ which satisfies the following conditions.

\bigskip

(2) We have $\mu (K_i)>p-\epsilon$.

(3) There exist a Borel partition $\{ I_j\}$ of $K_i$ and $\phi _{i,j}\in( N_0)_*$ such that 
\[ \| (\phi \circ \theta _{s_i})_\gamma -\sum _j \phi _{i,j}\chi _{I_j}(\gamma ) \| <\epsilon  \]
for $ \gamma \in K_i$. 

\bigskip

 Set 
\[ \psi _{t, \gamma }:=\sum _{i,j}\phi _{i,j}\chi _{[s_i,s_{i+1})}(t)\chi _{I_j}(\gamma ). \]
for each $(t, \gamma ) \in [0,p)^2$. For each $s\in [s_i,s_{i+1})$, set
\[ K_s:=\{ \gamma \in [0,p)\mid \| (\phi \circ \theta _s) _\gamma -(\phi \circ \theta _{s_i})_\gamma \| <\epsilon \} . \] 
Then by the above inequality (1), we have $\mu (K_s)>p-\epsilon $. For $\gamma \in K_s\cap K_i$, we have 
\[ \| (\phi \circ \theta _s)_\gamma -\psi _{s, \gamma } \| <2\epsilon .\]
Set
\[ K:=\{ (s, \gamma ) \in [0,p)^2 \mid \| (\phi \circ \theta _s)_\gamma -\psi _{s, \gamma } \| <2\epsilon \} .\]
Then we have $\mu (K) >p(p-2\epsilon )$. Hence $(s, \gamma ) \mapsto (\phi \circ \theta _s)_\gamma $ is well-approximated by simple step Borel functions in measure convergence. Hence this is Lebesgue measurable.
\end{proof}

\textbf{Acknowledgements.}
The author is thankful to Professor Toshihiko Masuda for pointing out a mistake in the proof of Lemma \ref{7} in an early draft and giving him useful advice about Theorem \ref{rp}. He also thanks to Professor Kawahigashi, who is his adviser, for encouragement on this work and for giving him useful advice about the presentation of the paper. He also  thanks to Professor Izumi, Professor Tomatsu and Professor Ueda for giving him useful advice about subsection \ref{extend}. He is supported by the Program for Leading Graduate
Schools, MEXT, Japan.

\end{document}